\theoremstyle{plain}
\newtheorem{theorem}{Theorem}[section]
\newtheorem{lemma}[theorem]{Lemma}
\newtheorem{prop}[theorem]{Proposition}
\newtheorem{defi}[theorem]{Definition}
\newtheorem{remark}[theorem]{Remark}
\newtheorem*{ques*}{Question}	
\newtheorem*{conj*}{Conjecture}
\newtheorem*{quesa*}{Question A}
\newtheorem*{quesb*}{Question B}
\newtheorem*{quesc*}{Question C}
\theoremstyle{remark}
\numberwithin{equation}{section}
\newcommand{\QQ}{\mathbb{Q}}
\newcommand{\CC}{\mathbb{C}}
\newcommand{\PP}{\mathbb{P}}
\newcommand{\ZZ}{\mathbb{Z}}
\newcommand{\FF}{\mathbb{F}}
\newcommand{\CE}{\mathcal{E}}
\newcommand{\CL}{\mathcal{L}}
\newcommand{\CO}{\mathcal{O}}
\newcommand{\Pic}{{\mathrm{Pic}}}
\newcommand{\baselocus}{{\mathrm{Bs}}}
\newcommand{\movable}{{\mathrm{Mov}}}
\newcommand{\coeff}{{\mathrm{coeff}}}
\newcommand{\supp}{{\mathrm{Supp}}}
\newcommand{\vol}{{\mathrm{vol}}}
\newcommand{\roundup}[1]{{\left\lceil #1 \right\rceil}}
\newcommand{\rounddown}[1]{{\left\lfloor #1 \right\rfloor}}
\begin{document}

\title{Algebraic threefolds of general type with small volume}
	
\author{Yong Hu}
\author{Tong Zhang}
\date{\today}
	
\address[Y.H.]{School of Mathematical Sciences, Shanghai Jiao Tong University, 800 Dongchuan Road, Shanghai 200240, People's Republic of China}
\email{yonghu@sjtu.edu.cn}
	
\address[T.Z.]{School of Mathematical Sciences, Shanghai Key Laboratory of PMMP, East China Normal University, 500 Dongchuan Road, Shanghai 200241, People's Republic of China}
\email{tzhang@math.ecnu.edu.cn, mathtzhang@gmail.com}
	
\begin{abstract}
	It is known that the optimal Noether inequality $\vol(X) \ge \frac{4}{3}p_g(X) - \frac{10}{3}$ holds for every $3$-fold $X$ of general type with $p_g(X) \ge 11$. In this paper, we give a complete classification of $3$-folds $X$ of general type with $p_g(X) \ge 11$ satisfying the above equality by giving the explicit structure of a relative canonical model of $X$. This model coincides with the canonical model of $X$ when $p_g(X) \ge 23$. We also establish the second and third optimal Noether inequalities for $3$-folds $X$ of general type with $p_g(X) \ge 11$. These results answer two open questions raised by J. Chen, M. Chen and C. Jiang, and in dimension three an open question raised by J. Chen and C. Lai. A novel phenomenon shows that there is a one-to-one correspondence between the three Noether inequalities and three possible residues of $p_g(X)$ modulo $3$. 
\end{abstract}
		




\maketitle

\section{Introduction}

Throughout this paper, we work over the complex number field $\CC$, and all varieties are projective.

\subsection{Motivation} The classical Noether inequality, proved by M. Noether \cite{Noether}, asserts that
\begin{equation} \label{eq: Noether surface}
	K_S^2 \ge 2p_g(S) - 4
\end{equation}
for every minimal surface $S$ of general type. This inequality is one of the foundational results in the theory of surfaces. More importantly, it leads to the so-called \emph{geography of surfaces of general type} which aims to classify surfaces of general type based on the relations of their birational invariants (see \cite[\S 1]{Persson} for more details). As a typical example, minimal surfaces of general type attaining the equality in \eqref{eq: Noether surface} are said to be on the Noether line, and the explicit classification of such surfaces has been completed by E. Horikawa \cite{Horikawa1}.

It is known that the Noether inequality exists in any dimension: there exist $a_n, b_n > 0$ such that $K_X^n \ge a_n p_g(X) - b_n$ for every minimal $n$-fold $X$ of general type \cite{Chen_Jiang}. However, to obtain the optimal $a_n$ and $b_n$ for $n \ge 3$ is quite challenging (see \cite{Kobayashi,Catanese_Chen_Zhang,Chen_Chen} for $n=3$, as well as \cite{Chen_Lai,Totaro_Wang} for general $n$). Recently, J. Chen, M. Chen and C. Jiang \cite{Chen_Chen_Jiang,Chen_Chen_Jiang2} proved the following optimal Noether inequality that
\begin{equation} \label{eq: Noether 3fold}
	K_X^3 \ge \frac{4}{3} p_g(X) - \frac{10}{3}
\end{equation}
for every minimal $3$-fold $X$ of general type with $p_g(X) \ge 11$. This optimal result naturally leads to more challenging problems, just as the classical Noether inequality does for surfaces, In this paper, we are interested in the following two open questions raised by J. Chen, M. Chen and C. Jiang.

\begin{quesa*} \cite[Question 1.5]{Chen_Chen_Jiang}
	Is there a classification for minimal $3$-folds $X$ of general type satisfying $K_X^3 = \frac{4}{3} p_g(X) - \frac{10}{3}$? Is there any non-Gorenstein minimal $3$-fold $X$ of general type satisfying $K_X^3 = \frac{4}{3} p_g(X) - \frac{10}{3}$?
\end{quesa*}

The answer to the first part of Question A can be viewed as an analogue of E. Horikawa's work \cite{Horikawa1} in dimension three. Regarding the second part, M. Kobayashi \cite{Kobayashi} first constructed a series of examples of minimal $3$-folds $X$ of general type satisfying $K_X^3 = \frac{4}{3} p_g(X) - \frac{10}{3}$. Later, Y. Chen and the first named author \cite{Chen_Hu} generalized Kobayashi's method to obtain more examples. Recently, M. Chen, C. Jiang and B. Li \cite{Chen_Jiang_Li} found two new examples via a different method. Note that all these examples are Gorenstein.

\begin{quesb*} \cite[Question 1.6]{Chen_Chen_Jiang}
	Is there a ``second Noether inequality" in dimension three? Namely, is there a real number $b < \frac{10}{3}$ such that if $K_X^3 > \frac{4}{3} p_g(X) - \frac{10}{3}$ for a minimal $3$-fold $X$ of general type, then $K_X^3 \ge \frac{4}{3} p_g(X) - b$?
\end{quesb*}

For a minimal surface $S$ of general type, since $K_S$ is Cartier, $K_S^2$ is always an integer. Thus the ``second Noether inequality" is naturally that $K_S^2 \ge 2p_g(S) - 3$, and it is actually optimal. However, since the Cartier indices of minimal $3$-folds of general type are unbounded, it is not even clear whether $-\frac{10}{3}$ is an accumulation point of the set
$$
\mathcal{S} = \left\{K_X^3 - \frac{4}{3}p_g(X) \, | \, X \, \mbox{minimal $3$-fold of general type}\right\}.
$$
This is the main obstruction to the existence of the ``second Noether inequality" in dimension three, not to mention the optimal one.

The main purpose of this paper is to answer the above two questions for $3$-folds $X$ of general type with $p_g(X) \ge 11$.
\begin{itemize}
	\item [(1)] We give a complete classification of minimal $3$-folds $X$ of general type with $p_g(X) \ge 11$ satisfying $K^3_X = \frac{4}{3} p_g(X) - \frac{10}{3}$, and we show that they are all Gorenstein.
	
	\item [(2)] We establish the optimal ``second Noether inequality" and even the optimal ``third Noether inequality" for minimal $3$-folds $X$ of general type with $p_g(X) \ge 11$.
\end{itemize}

\subsection{Threefolds on the Noether line}
In this subsection, we introduce our answer to Question A.

\begin{defi} \label{def: Noether line}
	We say a minimal $3$-fold $X$ of general type with $p_g(X) \ge 11$ is \emph{on the (first) Noether line}, if $K_X^3 = \frac{4}{3}p_g(X) - \frac{10}{3}$.
\end{defi}

The first main theorem of this paper gives some basic properties of $3$-folds on the Noether line.

\begin{theorem} \label{thm: main1}
	Let $X$ be a minimal $3$-fold of general type with $p_g(X) \ge 11$ and on the Noether line. Then the following statements hold:
	\begin{itemize}
		\item [(1)] $p_g(X) \equiv 1$ $(\mathrm{mod}$ $3)$, and $X$ is Gorenstein;
		\item [(2)] $h^1(X, \CO_X) = h^2(X, \CO_X) = 0$;
		\item [(3)] The canonical image $\Sigma \subseteq \PP^{p_g(X) - 1}$ of $X$ is a non-degenerate surface of degree $p_g(X) - 2$, and $\Sigma$ is smooth when $p_g(X) \ge 23$;
		\item [(4)] $X$ is simply connected.
	\end{itemize}
\end{theorem}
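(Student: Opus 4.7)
My plan is to deduce Theorem \ref{thm: main1} from an explicit description of the relative canonical model of $X$ that the body of the paper will construct. The structure theorem should realise (a small modification of) $X$ as a Gorenstein double cover $Y \to \mathcal{P}$, with $\mathcal{P}$ a projective bundle over the canonical image $\Sigma$, together with an explicitly described branch divisor. Granted such a model, each of (1)--(4) becomes a direct check on $Y$, so the real work is concentrated in producing the model, which in turn depends on pinning down $\Sigma$ as in (3).

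First I would establish (3) by revisiting the proof of the optimal Noether inequality \eqref{eq: Noether 3fold} from \cite{Chen_Chen_Jiang,Chen_Chen_Jiang2}. On a resolution $\pi\colon X'\to X$ with basepoint-free movable part $M$ of $\pi^*|K_X|$, let $\varphi_{|M|}\colon X'\to \Sigma\subset \PP^{p_g(X)-1}$. Tracking their volume estimates sharply, I expect to show that equality in \eqref{eq: Noether 3fold} can only occur when $\dim\Sigma=2$ and $\deg\Sigma=p_g(X)-2$, i.e.\ $\Sigma$ is a non-degenerate surface of minimal degree; any other configuration yields a strict inequality. The classical classification of varieties of minimal degree then forces $\Sigma$ to be a rational normal scroll, the Veronese $\PP^2\subset \PP^5$, or a cone over a rational normal curve. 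The smoothness statement for $p_g(X)\ge 23$ will be obtained by excluding the singular possibilities in that range, presumably from sharper properties of the double-cover structure (the cone case, which should account for $11\le p_g(X)\le 22$, is what forces the canonical model to differ from the relative canonical model).

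With the Gorenstein model $Y$ in hand, the remaining parts are essentially bookkeeping. For (1), uniqueness of the canonical model, together with $K_X^3=K_Y^3$ and the minimality of $X$, identifies the canonical model of $X$ with $Y$, giving Gorensteinness of $X$; then $K_Y^3\in\ZZ$ combined with $K_Y^3=\tfrac43 p_g(X)-\tfrac{10}{3}$ is equivalent to $p_g(X)\equiv 1\pmod 3$. For (2), the double-cover formula $p_*\CO_Y=\CO_{\mathcal{P}}\oplus \CL^{-1}$ reduces the vanishing of $h^1$ and $h^2$ to known vanishings on the projective bundle $\mathcal{P}$ over the rational surface $\Sigma$, plus Kodaira vanishing for $\CL$. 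For (4), $\mathcal{P}$ is simply connected, and the branch divisor will be sufficiently ample to apply a Nori/Lefschetz-type theorem to the double cover, so that $Y$, and hence $X$ (using birational invariance of $\pi_1$ for smooth projective varieties together with the rationality of the canonical singularities of $X$), is simply connected. The main obstacle I anticipate is the sharp volume analysis forcing equality to occur only for $\Sigma$ of minimal degree, together with the delicate exclusion of the singular configurations of $\Sigma$ in the regime $p_g(X)\ge 23$.
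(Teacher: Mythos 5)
There is a genuine gap, and it is a structural one: your plan derives part (1) --- Gorensteinness and the congruence $p_g(X)\equiv 1\pmod 3$ --- from the explicit double-cover model, but in the paper the logical dependency runs in exactly the opposite direction. The classification of the relative canonical model (Theorem \ref{thm: main2}, proved in Section 5) takes Theorem \ref{thm: main1} (1) as input at essentially every step: the Cartierness of $K_X$ is what makes $Z$ and $E_\pi$ integral divisors, what forces the fibers $W$ of the map to $\FF_e$ to be integral (the argument in Lemma \ref{lem: Noether line flatness} hinges on $((\pi_0^*K_{X_0})\cdot W)=1$ being an intersection number with a Cartier divisor), and what makes $\omega_{X_0'}$ invertible in Lemma \ref{lem: E}. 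So you cannot first build the Gorenstein model $Y$ and then read off Gorensteinness of $X$ from it; that is circular. The paper instead proves (1) by a purely numerical argument that your proposal does not contain. The congruence comes from the inequality $K_X^3 \ge d + \tfrac{1}{2}\roundup{\tfrac{2(d-2)}{3}}$ with $d=p_g(X)-2$ (Proposition \ref{prop: Noether (1,2)-surface}): the round-up makes the right-hand side strictly exceed $\tfrac{4}{3}p_g(X)-\tfrac{10}{3}$ unless $p_g(X)\equiv 1\pmod 3$. Gorensteinness then follows by combining the upper bound $P_2(X)\le \rounddown{2K_X^3}+\rounddown{2K_X^3-\tfrac{5(p_g(X)-1)}{3}}+7$ (Proposition \ref{prop: upper bound of P2}) with Reid's Riemann--Roch formula $P_2(X)=\tfrac{1}{2}K_X^3+3\chi(\omega_X)+l_2(X)$ and $\chi(\omega_X)=p_g(X)-1$ to force $l_2(X)=0$. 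Neither of these two estimates appears in your plan, and without them (or a substitute) part (1) is unproved. Note also that your arithmetic shortcut ``$K^3\in\ZZ$ plus the Noether equality gives the congruence'' presupposes Gorensteinness, so it cannot replace the round-up argument.

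A second, smaller gap: you treat the reduction to $\dim\Sigma=2$ as a matter of ``tracking the volume estimates sharply'' in \cite{Chen_Chen_Jiang,Chen_Chen_Jiang2}, but excluding $\dim\Sigma=1$ is new content of this paper (Proposition \ref{prop: Noether dim 1}), proved by a base-change along a double cover of $\PP^1$ and the inequality $K^3\ge\tfrac{4}{3}\chi(\omega)$ from \cite{Hu_Zhang}; it is not contained in the earlier volume estimates. Your treatment of (2), (3) and (4) is reasonable in outline --- though the paper gets (2) more cheaply from Koll\'ar's torsion-freeness of $R^if_*\omega_X$ for the fibration over $\PP^1$, and gets (4) from the existence of a section and the absence of multiple fibers rather than from a Nori--Lefschetz argument on the branch divisor --- but as written those parts also lean on the model whose construction is blocked by the circularity above.
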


Let us compare Theorem \ref{thm: main1} with the corresponding results in dimension two. Let $S$ be a minimal surface of general type on the Noether line.
\begin{itemize}
	\item [(1)] E. Bombieri proved that $h^1(S, \CO_S) = 0$ \cite[Lemma 14]{Bombieri}.
	\item [(2)] E. Horikawa proved that the canonical image $\Sigma \subseteq \PP^{p_g(S) - 1}$ of $S$ is a non-degenerate surface of degree $p_g(S) - 2$, and $\Sigma$ is smooth when $p_g(S) \ge 7$ \cite[\S 1]{Horikawa1}.
	\item [(3)] E. Horikawa also proved that $S$ is simply connected \cite[Theorem 3.4]{Horikawa1}.
\end{itemize}
Theorem \ref{thm: main1} (2)--(4) show that similar results hold in dimension three.  Note that there exists a minimal $3$-fold $X$ on the Noether line with $p_g(X) = 19$ whose canonical image is not smooth \cite[Table 10, No. 11]{Chen_Jiang_Li}. Thus the lower bound $p_g(X) \ge 23$ in Theorem \ref{thm: main1} (3) is almost optimal, with the only missing case when $p_g(X) = 22$.

Theorem \ref{thm: main1} (1) answers the second part of Question A in the negative when $p_g(X) \ge 11$. More interestingly, it reveals a novel phenomenon for $3$-folds which does not occur for surfaces. Namely, every minimal $3$-fold $X$ of general type with $p_g(X) \ge 11$ and on the Noether line must satisfy $p_g(X) \equiv 1$ $(\mathrm{mod}$ $3)$. In contrast, for any $m \ge 3$ there exists a minimal surface $S$ of general type on the Noether line with $p_g(S) = m$. This ``congruence modulo $3$" phenomenon suggests that there are two missing Noether inequalities for $3$-folds with $p_g(X) \equiv 2$ and $0$ $(\mathrm{mod}$ $3)$, respectively (which we will establish in Theorem \ref{thm: main3}).

After this paper was finished,  J. Chen informed us that assuming the canonical image being a surface,  he has got a proof of Theorem \ref{thm: main1} (1) using a different method.

Now we turn to the first part of Question A, the classification problem. Let $X$ be a minimal $3$-fold of general type with $p_g(X) \ge 11$ and on the Noether line. By Theorem \ref{thm: main1} (1), we may assume that $p_g(X) = 3m - 2$ for an integer $m \ge 5$. A key observation (see Proposition \ref{prop: fibered minimal model}) is that, there exists a minimal $3$-fold $X_1$ birational to $X$ such that $X_1$ admits a fibration $f: X_1 \to \PP^1$ with general fiber a $(1, 2)$-surface. Here a $(1, 2)$-surface means a smooth surface $S$ of general type with $\vol(S) = 1$ and $p_g(S) = 2$. Moreover, $f$ has a natural section $\Gamma$ which is the horizontal base locus of the relative canonical map of $X_1$ over $\PP^1$ (see \S \ref{subsection: general setting}). Let $X_0$ be the relative canonical model of $X_1$ over $\PP^1$ with the induced fibration $f_0: X_0 \to \PP^1$. Then $\Gamma$ descends to a section $\Gamma_0$ of $f_0$.

\begin{defi} \label{def: relative canonical model}
	We say that $X_0$ is the relative canonical model associated to $X$, and $\Gamma_0$ is the canonical section of $f_0$.
\end{defi}

With the above definition, we now introduce the answer to the first part of Question A.

\begin{theorem} \label{thm: main2}
	Let $X$ be a minimal $3$-fold of general type with $p_g(X) = 3m - 2 \ge 11$ and on the Noether line. Let $X_0$ be the relative canonical model associated to $X$ with the fibration $f_0: X_0 \to \PP^1$. Let $X'_0$ be the blow-up of $X_0$ along the canonical section $\Gamma_0$ of $f_0$. Then the induced fibration $f'_0: X'_0 \to \PP^1$ is factorized as
	$$
	f'_0: X'_0 \stackrel{\rho} \longrightarrow Y \stackrel{q} \longrightarrow \FF_e \stackrel{p} \longrightarrow \PP^1
	$$
	with the following properties:
	\begin{itemize}
		\item [(i)] the Hirzebruch surface $\FF_e$ is isomorphic to $\PP((f_0)_* \omega_{X_0})$;
		\item [(ii)] $q: Y = \PP(\CO_{\FF_e} \oplus \CO_{\FF_e} (-2\bm{s} - (m + e)\bm{l})) \to \FF_e $ is a $\PP^1$-bundle, where $\bm{s}$ is a section on $\FF_e$ with $\bm{s}^2 = -e$ and $\bm{l}$ is a ruling on $\FF_e$;
		\item [(iii)] $\rho: X'_0 \to Y$ is a flat double cover with the branch locus $B = B_1 + B_2$, where $B_1$ is the relative hyperplane section of $Y$, $B_2 \sim 5B_1 + 5(m+e)q^*\bm{l} + 10 q^*\bm{s}$ and $B_1 \cap B_2 = \emptyset$.
	\end{itemize}
	Moreover, if $p_g(X) \ge 23$, then $X_0$ is exactly the canonical model of $X$.
\end{theorem}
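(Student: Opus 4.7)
My plan is to construct the tower $X'_0\to Y\to\FF_e\to\PP^1$ from the bottom up, relativizing over $\PP^1$ and then over $\FF_e$ the classical Horikawa description of a $(1,2)$-surface as a double cover of a Hirzebruch surface, and to pin down the numerical invariants using the Noether equality $K_X^3=\tfrac{4}{3}p_g(X)-\tfrac{10}{3}$. Since the general fiber $F$ of $f_0$ is a $(1,2)$-surface with $h^0(F,\omega_F)=2$, the sheaf $\CE:=(f_0)_*\omega_{X_0}$ is locally free of rank $2$ on $\PP^1$, and its projectivization is $\FF_e=\PP(\CE)$, which proves part (i). On each fiber, $|K_F|$ is a pencil with a single base point, and these base points trace out exactly the canonical section $\Gamma_0$; therefore the relative canonical map $X_0\dashrightarrow\FF_e$ extends to a morphism $\pi:X'_0\to\FF_e$ after blowing up $\Gamma_0$.

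On each fiber of $f_0$ this $\pi$ restricts to the resolved canonical map of a $(1,2)$-surface, which is a degree $2$ map onto $\PP^1$; hence $\pi$ is a relative genus $2$ fibration. Relativizing the hyperelliptic structure, I let $\mathcal{G}:=\pi_*\omega_{X'_0/\FF_e}$, which is locally free of rank $2$ on $\FF_e$, and set $Y:=\PP(\mathcal{G})\to\FF_e$; the relative canonical map of $\pi$ then induces a flat degree $2$ morphism $\rho:X'_0\to Y$ over $\FF_e$, providing the required factorization. After a standard normalization of the form $Y=\PP(\CO_{\FF_e}\oplus\CL^{-1})$ for a line bundle $\CL$ on $\FF_e$, the section $B_1$ appears as the relative hyperplane section, corresponding to the quotient $\CO_{\FF_e}\oplus\CL^{-1}\twoheadrightarrow\CO_{\FF_e}$, and the double cover obeys $\rho_*\CO_{X'_0}=\CO_Y\oplus q^*\CL^{-1}$ together with the adjunction $\omega_{X'_0}=\rho^*\bigl(\omega_Y\otimes\CO_Y(\tfrac{1}{2}(B_1+B_2))\bigr)$.

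To identify $\CL=\CO_{\FF_e}(2\bm{s}+(m+e)\bm{l})$ and $B_2\sim 5B_1+5(m+e)q^*\bm{l}+10q^*\bm{s}$, I would combine three ingredients: the fiberwise restriction to a ruling $\bm{l}\subset\FF_e$, which is forced by Horikawa's description of a $(1,2)$-surface as a double cover of $\FF_2$ branched on the $(-2)$-section plus a divisor in $|5\bm{s}+10\bm{l}|$, fixing the coefficients $5$, $10$ and the restriction $\CL|_{\bm{l}}$; the twist along $\bm{s}$, pinned down by $h^0(Y,B_1)=h^0(\FF_e,q_*\CO_Y(B_1))=p_g(X)=3m-2$ via the projection formula and the Leray spectral sequence; and the Noether equality combined with the double-cover adjunction, which fixes any remaining coefficient via $K_{X'_0}^3$. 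Disjointness $B_1\cap B_2=\emptyset$ is forced because any intersection point would produce a singularity worse than canonical on $X'_0$ (and hence on $X_0$), contradicting the definition of the relative canonical model. For the ``moreover'' part, when $p_g(X)\ge 23$ the explicit branch data yield $K_{X_0}\cdot C>0$ for every horizontal curve $C\subset X_0$, so $K_{X_0}$ becomes globally ample and $X_0$ is the canonical model of $X$. The hardest step I expect is this last identification: pushing forward through the blow-up $X'_0\to X_0$ and through $\rho$ with enough precision to extract all twists and branch coefficients, which requires careful bookkeeping with relative duality and the Tschirnhaus decomposition of $\rho_*\CO_{X'_0}$ alongside the Noether constraint.
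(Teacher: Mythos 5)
Your architecture is the same as the paper's (build $\FF_e=\PP((f_0)_*\omega_{X_0})$, realize $X'_0$ as a genus-$2$ fibration over $\FF_e$ via the relative canonical map, take the relative canonical map again to get the double cover of $Y$, then pin down the branch data numerically), but the proposal skips precisely the steps where the real work lies, and two of your stated ingredients are wrong. The central gap is the treatment of degenerate fibers. You assert that blowing up $\Gamma_0$ resolves the relative canonical map and that the resulting $\varphi\colon X'_0\to\FF_e$ is a ``relative genus $2$ fibration'' with $\varphi_*\omega_{X'_0/\FF_e}$ locally free of rank $2$ and with a finite flat degree-$2$ relative canonical morphism $\rho$. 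Over general fibers of $f_0$ this is Horikawa's picture, but a priori $\varphi$ could have two-dimensional fibers, or reducible or non-reduced one-dimensional fibers, in which case $\rho$ need not even be a morphism (base-point-freeness of $\omega_W$ on every fiber $W$ is what makes $\rho$ finite, and this is only available for \emph{integral} genus-$2$ curves, via Catanese--Franciosi--Hulek--Reid). The paper spends most of its effort here: it first proves $\baselocus|K_{X_0}|=\Gamma_0$ with $\Gamma_0$ in the smooth locus and $|\pi_0^*K_{X_0}-E_0|$ base point free (Proposition \ref{prop: Noether line KX}, Lemma \ref{lem: Noether line KX0}, which rest on the numerical identity $K_{S_0}\equiv 2(p_g(X)-2)C_0+2\Gamma_{S_0}$ extracted from the Noether equality), and then proves flatness and integrality of all fibers of $\varphi$ (Lemma \ref{lem: Noether line flatness}) using the $f_0$-ampleness of $K_{X_0}$ together with the fact that $K_{X_0}$ is Cartier and $((\pi_0^*K_{X_0})\cdot W)=1$ --- note that the Gorenstein property itself is a theorem here (Theorem \ref{thm: main1} (1)), not a hypothesis. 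None of this is automatic from the generic fiber, and your proposal gives no mechanism to rule out bad fibers.

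Two further concrete errors. First, your normalization step via $h^0(Y,B_1)=h^0(\FF_e,q_*\CO_Y(B_1))=p_g(X)$ is false: $q_*\CO_Y(B_1)=\CE=\CO_{\FF_e}\oplus\CO_{\FF_e}(-2\bm{s}-(m+e)\bm{l})$, so $h^0(Y,B_1)=1$; the geometric genus instead enters through $h^0(Y,\omega_Y(L))=h^0(\FF_e,\bm{s}+(e+k)\bm{l})$, and the twist of $\CE$ is determined by $E_Y|_{E_Y}$, which the paper computes from $(K_{X_0}\cdot\Gamma_0)=\tfrac{p_g(X)-4}{3}$ (again the Noether equality) rather than from any $h^0$ count on $Y$. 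Second, your disjointness argument is not valid as stated: a transverse intersection of $B_1$ and $B_2$ produces a $cA_1$ point on the double cover, which is canonical, so nothing ``worse than canonical'' arises and no contradiction with $X_0$ being a relative canonical model follows. The correct argument is either the paper's (the divisor class computation gives $B_2|_{B_1}\sim 0$ with $B_1\not\subseteq B_2$, forcing $B_1\cap B_2=\emptyset$) or the observation that $X'_0$ is smooth in a neighborhood of $E_0=\rho^{-1}(B_1)$ because $\Gamma_0$ lies in the smooth locus of $X_0$ --- but that smoothness of the locus containing $\Gamma_0$ is itself one of the nontrivial conclusions you would need to have proved first. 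Finally, for the ``moreover'' clause, the bound $p_g(X)\ge 23$ enters because one needs $k\ge 1$ (equivalently, the canonical image is not a cone), which comes from Proposition \ref{prop: 2K-F}; your sketch does not identify where this hypothesis is used.
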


In one word, $X_0$ is a divisorial contraction of a double cover of a two-tower of $\PP^1$-bundles over $\PP^1$. Recall that every minimal surface of general type on the Noether line is birationally a double cover of a $\PP^1$-bundle over $\PP^1$ \cite[\S 1]{Horikawa1}. Theorem \ref{thm: main2} asserts that birationally every minimal $3$-fold of general type on the Noether line has a similar structure. A closely related question raised by J. Chen and C. Lai \cite[Question 6.4]{Chen_Lai} asks whether a minimal $n$-fold $X$ of general type ($n \ge 3$) with
$$
K_X^n = \frac{n+1}{n} p_g(X) - \frac{n^2 + 1}{n}
$$
is, under some extra assumptions, birationally a double cover of a tower of $\PP^1$-bundles. Theorem \ref{thm: main2} gives a general affirmative answer to this question for $n = 3$ when $p_g(X) \ge 11$, and no extra assumptions are needed.

\subsection{Two new Noether inequalities} In this subsection, we introduce the second and the third Noether inequalities for $3$-folds of general type.

\begin{theorem} \label{thm: main3}
	Let $X$ be a minimal $3$-fold of general type with $p_g(X) \ge 11$.
	\begin{itemize}
		\item [(1)] Suppose that $K_X^3 > \frac{4}{3}p_g(X) - \frac{10}{3}$. Then we have the optimal inequality
		\begin{equation} \label{eq: main3 second Noether}
			K_X^3 \ge \frac{4}{3}p_g(X) - \frac{19}{6}.
		\end{equation}
		If the equality holds, then $p_g(X) \equiv 2$ $(\mathrm{mod}$ $3)$. Moreover, $X$ has only one non-Gorenstein terminal singularity, and it is of type $\frac{1}{2}(1, -1, 1)$.
		
		\item [(2)] Suppose that $K_X^3 > \frac{4}{3}p_g(X) - \frac{19}{6}$. Then we have the optimal inequality
		\begin{equation} \label{eq: main3 third Noether}
			K_X^3 \ge \frac{4}{3}p_g(X) - 3.
		\end{equation}
		If the equality holds, then $p_g(X) \equiv 0$ $(\mathrm{mod}$ $3)$. Moreover, one of the following two cases occurs:
		\begin{itemize}
			\item [(i)] $X$ has two non-Gorenstein terminal singularities, and they are of the same type $\frac{1}{2}(1, -1, 1)$;
			\item [(ii)] $X$ has only one non-Gorenstein terminal singularity, and it is of type $cA_1/\mu_2$.
		\end{itemize}
	\end{itemize}
    Moreover, if $X$ attains the equality in either \eqref{eq: main3 second Noether} or \eqref{eq: main3 third Noether}, then the statements (2)--(4) in Theorem \ref{thm: main1} also hold for $X$.
\end{theorem}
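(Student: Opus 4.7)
The plan is to extend the machinery of Theorems \ref{thm: main1} and \ref{thm: main2} to a non-Gorenstein setting by carefully tracking the contribution of terminal points to the volume $K_X^3$. As in the Noether-line case, the starting point is a birational minimal model $X_1 \to X$ admitting a fibration $f: X_1 \to \PP^1$ with $(1,2)$-surface fibers, together with the tower $X_0 \to Y \to \FF_e \to \PP^1$ on the relative canonical model. Intersection-theoretic manipulations on this tower, of the kind used to prove Theorem \ref{thm: main2}, yield an identity
$$K_X^3 \;=\; \tfrac{4}{3}p_g(X) - \tfrac{10}{3} + \Delta,$$
in which $\Delta \ge 0$ vanishes exactly in the Noether-line case and decomposes into two pieces: a \emph{horizontal} part measuring the deviation of the branch divisor, of the splitting type of $(f_0)_*\omega_{X_0}$, and of the invariant $e$ from the extremal configuration of Theorem \ref{thm: main2}, plus the local Reid contribution of the non-Gorenstein points of $X_0$ to the volume. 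The horizontal part is a nonnegative integer combination of intersection numbers on a $\PP^1$-bundle tower, and a direct check shows that its smallest positive value is at least $\tfrac{1}{3}$.

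The core of the argument is then a discrete analysis of the singular contribution. Using the classification of terminal threefold singularities together with the Reid--Kawamata singular Riemann--Roch formula and the integrality of $\chi(X,\CO_X(mK_X))$ for every $m\ge 1$, and exploiting that the general fiber of $f$ is a $(1,2)$-surface, I would argue that the smallest strictly positive value of $\Delta$ is $\tfrac{1}{6}$, attained precisely when $X_0$ has a single point of type $\tfrac{1}{2}(1,-1,1)$ and the horizontal geometry remains extremal, while the next value $\tfrac{1}{3}$ is attained only by the two singular profiles stated in Theorem \ref{thm: main3}(2). The congruences $p_g(X)\equiv 2$ or $0\pmod 3$ in the two equality cases then drop out of the same Riemann--Roch bookkeeping: $\chi(\CO_X)$ must be an integer, and after subtracting the known fractional contributions from the singular points the residue of $p_g(X)$ modulo $3$ is forced into the stated value.

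Finally, when $X$ attains equality in \eqref{eq: main3 second Noether} or \eqref{eq: main3 third Noether}, the geometric structure is read off from a modest modification of Theorem \ref{thm: main2}: the birational tower description persists with the branch locus of $\rho$ altered only over the one or two non-Gorenstein points, the canonical image is still a non-degenerate surface of degree $p_g(X)-2$, and the Horikawa-type arguments for $h^1(\CO_X)=h^2(\CO_X)=0$, for the canonical image being smooth in large $p_g$, and for simple connectivity, all carry over essentially unchanged. The main obstacle will be the preceding discrete step: ruling out any value of $\Delta$ strictly between $0$ and $\tfrac{1}{6}$, or between $\tfrac{1}{6}$ and $\tfrac{1}{3}$, when the Cartier index of $X$ is a priori unbounded. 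This is a delicate interplay between local Reid contributions, the global integrality of several Euler characteristics, and the constraint imposed on the horizontal data by the $(1,2)$-surface fibration; that interlocking is where the real work lies.
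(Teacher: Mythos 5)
There is a genuine gap, and the approach as outlined would not close it. The first problem is circularity: the tower $X'_0 \to Y \to \FF_e \to \PP^1$ of Theorem \ref{thm: main2} is established in the paper only \emph{after} assuming the equality $K_X^3 = \frac{4}{3}p_g(X)-\frac{10}{3}$ (flatness and integrality of the fibers of $\varphi$, the identification $\baselocus|K_{X_0}|=\Gamma_0$, and the splitting of $\CE$ all use Proposition \ref{prop: Noether equality (1,2)-surface}, which is an equality-case statement). So you cannot use that tower to produce an identity $K_X^3 = \frac{4}{3}p_g(X)-\frac{10}{3}+\Delta$ for a general $X$ off the line. Moreover, your proposed decomposition of $\Delta$ misattributes the source of the $\frac{1}{6}$ and $\frac{1}{3}$ gaps. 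In the paper these come from the \emph{horizontal} geometry: adjunction applied to the section $\Gamma_{S_0}$ of the genus-two fibration on a general canonical surface $S_0$ forces $(K_{S_0}\cdot\Gamma_{S_0}) = \frac{2d}{3}-\frac{4}{3}+\frac{1}{3}(\sigma_*(E_V+Z_V)\cdot\Gamma_{S_0})$ to be an integer, whence $(K_X\cdot\Gamma)\ge \frac{1}{2}\roundup{\frac{2(d-2)}{3}}$ (Proposition \ref{prop: Noether (1,2)-surface}); the excess of this round-up over $\frac{d-2}{3}$ is exactly $0$, $\frac{1}{6}$ or $\frac{1}{3}$ according to $p_g(X)$ mod $3$. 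This already contradicts your claim that the horizontal part has smallest positive value at least $\frac{1}{3}$, and it shows the three lines and the congruences are forced by the section geometry, not by local Reid contributions of singular points to the volume.

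The second, and decisive, problem is that you have no mechanism to exclude volumes strictly between the lines, and you acknowledge as much. Integrality of Euler characteristics cannot do this by itself: Reid's formula $P_2(X)=\frac{1}{2}K_X^3+3(p_g(X)-1)+l_2(X)$ \emph{computes} $P_2$ from $K_X^3$ and $l_2$, so it yields no contradiction unless one has an independent geometric \emph{upper} bound on $P_2(X)$. That bound is Proposition \ref{prop: upper bound of P2}, $P_2(X)\le \rounddown{2K_X^3}+\rounddown{2K_X^3-\frac{5(p_g(X)-1)}{3}}+7$, proved by restricting $\movable|2K_{X'}|$, $\movable|2K_{X'}-M|$ and $\movable|2K_{X'}-2M|$ to a general member $S\in|M|$; it is this estimate, played against Reid--Riemann--Roch and $l_2(X)\in\{0\}\cup[\frac14,\infty)$, that both rules out any $K_X^3$ in the forbidden ranges and pins down $l_2(X)=\frac14$ (resp. $\frac12$) in the equality cases, hence the singularity types. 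Nothing in your outline produces such an upper bound. Finally, two smaller omissions: the cases $\dim\Sigma=3$ and $\dim\Sigma=1$ must be disposed of separately (Propositions \ref{prop: Noether dim 3} and \ref{prop: Noether dim 1}; the curve case needs a base-change and double-cover argument to reach $K_X^3\ge\frac43 p_g(X)-\frac83$), and the claim that the structural statements (2)--(4) of Theorem \ref{thm: main1} ``carry over'' should be routed through Theorem \ref{thm: small volume}, which only needs $K_X^3<\frac43 p_g(X)-\frac83$, rather than through a modification of the equality-case classification.
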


We remark that the assumption that $p_g(X) \ge 11$ in Theorem \ref{thm: main3} is also optimal, since there exists a minimal $3$-fold of general type with $p_g(X) = 10$ and $K_X^3 = \frac{4}{3}p_g(X) - \frac{33}{10} < \frac{4}{3}p_g(X) - \frac{19}{6}$ \cite[Table 10, No. 10]{Chen_Jiang_Li}.

It is clear that Theorem \ref{thm: main3} (1) answers Question B. More importantly, Theorem \ref{thm: main3} confirms the aforementioned ``congruence modulo $3$" phenomenon that has emerged in Theorem \ref{thm: main1}. It is likely that this phenomenon will also show up in other results regarding the explicit geometry of $3$-folds of general type.

In a forthcoming paper \cite{Hu_Zhang2}, we will establish the complete classification of $3$-folds attaining the equality in \eqref{eq: main3 second Noether} and \eqref{eq: main3 third Noether}, respectively.

\subsection{Idea of the proof} It is clear that to prove the above theorems, we only need to consider the $3$-folds close to the Noether line. To illustrate the main idea of the proof, in the following, we assume that $X$ is a minimal $3$-fold of general type with $p_g(X) \ge 11$ and $K_X^3 < \frac{4}{3}p_g(X) - \frac{8}{3}$. Let
$$
\phi_{K_X}: X \dashrightarrow \PP^{p_g(X) - 1}
$$
be the canonical map of $X$ with the canonical image $\Sigma$.

The first key observation is that $\dim \Sigma = 2$, i.e., $\Sigma$ is a surface. In fact, by a result of Kobayashi (Proposition \ref{prop: Noether dim 3}), we know that $\dim \Sigma \le 2$. The key result we prove here is that $\dim \Sigma \ne 1$ (Proposition \ref{prop: Noether dim 1}). That is to say, there is a gap $\frac{2}{3} = \frac{10}{3} - \frac{8}{3}$ between $3$-folds on the Noether line and $3$-folds whose canonical image is a curve. Interestingly, it is within this gap that we discover two new Noether lines.

Second, using the geometry of the surface $\Sigma$, we prove that up to a birational modification, $X$ admits a fibration 
$$
f: X \to \PP^1
$$ 
with general fiber a minimal $(1, 2)$-surface (Proposition \ref{prop: fibered minimal model}). This important observation opens the door to studying the explicit geometry of $X$. With the help of the fibration $f$, we prove two key estimates in this paper. The first one (Proposition \ref{prop: Noether (1,2)-surface}) is that
$$
K_X^3 \ge p_g(X) + \frac{1}{2}\roundup{\frac{2(p_g(X) - 1)}{3}} - 3.
$$
Because of the ``round-up" appearing in this inequality, three possible residues of $p_g(X)$ modulo $3$ give rise to three different inequalities just as \eqref{eq: Noether 3fold}, \eqref{eq: main3 second Noether} and \eqref{eq: main3 third Noether}. However, to prove that they are indeed the first three Noether inequalities, we need the second key estimate  (Proposition \ref{prop: upper bound of P2}) that
$$
P_2(X) \le \rounddown{2K_X^3} + \rounddown{2K_X^3 - \frac{5(p_g(X) - 1)}{3}} + 7.
$$
Based on the above two estimates and a careful analysis using Reid's Riemann-Roch formula, we manage to prove that \eqref{eq: main3 second Noether} and \eqref{eq: main3 third Noether} are the desired optimal Noether inequalities. Furthermore, when $X$ attains the equality in any of \eqref{eq: Noether 3fold}, \eqref{eq: main3 second Noether} and \eqref{eq: main3 third Noether}, we are able to fully describe the non-Gorenstein singularities on $X$. Thus Theorem \ref{thm: main1} and \ref{thm: main3} are proved.

To prove Theorem \ref{thm: main2}, suppose that $K_X^3 = \frac{4}{3}p_g(X) - \frac{10}{3}$. Let $X_0$ be the relative canonical model of $X$ over $\PP^1$. We first show that $\baselocus|K_{X_0}| = \Gamma_0$ is the canonical section of $f_0: X_0 \to \PP^1$ (Lemma \ref{lem: Noether line KX0}). Denote by $X'_0$ the blow-up of $X_0$ along $\Gamma_0$. Then we show that the rational map 
$$
\varphi: X'_0 \dashrightarrow \PP(f_* \omega_X)
$$ 
is a flat morphism with all fibers integral curves of genus $2$ (Lemma \ref{lem: Noether line flatness}). Thus $X'_0$ is a double cover of a $\PP^1$-bundle over $\PP(f_* \omega_X)$. From this, we are able to fully determine the structure of $X'_0$. When $p_g(X) \ge 23$, we prove that  $K_{X_0}$ is ample (Lemma \ref{lem: Noether line KX0 ample}). Thus $X_0$ is the canonical model of $X$.

\subsection{Notation and conventions} \label{subsection: notation}
In this paper, we adopt the following notation and definitions.

\subsubsection{Varieties and divisors} Let $V$ be a normal variety of dimension $d$. The \emph{geometric genus} $p_g(V)$ and the \emph{second plurigenus} $P_2(V)$ of $V$ are defined as
$$
p_g(V):=h^0(V, K_V), \quad P_2(V) := h^0(V, 2K_V).
$$
For a Weil divisor $L$ on $V$, the \emph{volume} $\vol(L)$ of $L$ is defined as
$$
\vol(L) := \limsup\limits_{n \to \infty} \frac{h^0(V, nL)}{n^d/d!}.
$$
The volume $\vol(K_V)$ is called the \emph{canonical volume} of $V$, and is denoted by $\vol(V)$. We say that $V$ is \emph{minimal}, if $V$ has at worst $\QQ$-factorial terminal singularity and $K_V$ is nef. If $V$ has at worst canonical singularities and $\vol(V) > 0$, we say that $V$ is of general type. If $V$ is of general type and $K_V$ is ample, we say that $V$ is a canonical model. Note that if $V$ is minimal or a canonical model, then $\vol(V) = K_V^d$.

For a linear system $\Lambda$ on $V$, $\movable \Lambda$ and $\baselocus \Lambda$ denote the movable part and the base locus of $\Lambda$, respectively. The rational map 
$$
\phi_{K_V}: V \dashrightarrow \PP^{p_g(V) - 1}
$$
induced by the canonical linear system $|K_V|$ is called the \emph{canonical map} of $V$, and $\phi_{K_V}(V)$ is called the \emph{canonical image} of $V$.

A $\QQ$-divisor on $V$ is \emph{$\QQ$-effective}, if it is $\QQ$-linear equivalent to an effective $\QQ$-divisor.

For an integer $e \ge 0$, $\FF_e$ denotes the Hirzebruch surface $\PP \left(\CO_{\PP^1} \oplus \CO_{\PP^1} (-e)\right)$.


\subsubsection{Birational modifications with respect to divisors} \label{modification} For a normal variety $V$ with $\QQ$-factorial singularities and a Weil (thus $\QQ$-Cartier) divisor $L$ on $V$ with $h^0(V, L) \ge 2$, we can always find a resolution of singularities of $V$
$$
\alpha: V_0 \to V
$$
and a successive blow-ups
$$
\beta \colon V' = V_n \stackrel{\pi_{n-1}}\rightarrow V_{n-1} \rightarrow \cdots \rightarrow V_{i+1} \stackrel{\pi_{i}}\rightarrow V_{i} \rightarrow \cdots \rightarrow V_1 \stackrel{\pi_0}\rightarrow V_0
$$
with the following properties:

\begin{itemize}
	\item[(1)]  $\alpha$ is an isomorphism over the smooth locus of $V$.
	\item [(2)] All $V_i$'s $(i=0, \ldots, n)$ are smooth.
	\item [(3)] Denote $|M_0| = \movable|\rounddown{\alpha^*L}|$. Then each $\pi_i$ is a blow-up along a nonsingular center $W_i$ contained in the base locus of $\mathrm{Mov}|(\pi_0\circ \pi_1 \circ \cdots \circ \pi_{i-1})^*M_0|$.
	\item [(4)] The linear system $\movable|\beta^*M_0|$ is base point free.
\end{itemize}

Set
$$
\pi = \alpha \circ \beta: V' \to V.
$$
This birational modification $\pi$ will be used frequently in this paper.

\subsection*{Acknowledgement} Both authors are grateful to Professors Jungkai Alfred Chen, Meng Chen and Chen Jiang for stimulating questions and fruitful discussions.

Y.H. is sponsored by Shanghai Pujiang Program Grant No. 21PJ1405200. T.Z. is supported by the National Natural Science Foundation of China (NSFC) General Grant No. 12071139 and the Science and Technology Commission of Shanghai Municipality (STCSM) Grant No. 18dz2271000.


\section{Fibered minimal model for some threefolds with small volume}  \label{section: fibered model}

Let $X$ be a minimal $3$-fold of general type with $p_g(X) \ge 2$. Let
$$
\phi_{K_X}: X \dashrightarrow \PP^{p_g(X) - 1}
$$
be the canonical map of $X$ with the canonical image $\Sigma$. Denote by
$\pi: X' \to X$ the birational modification of $X$ with respect to $K_X$ as in \S \ref{modification}. We have the following commutative diagram
$$
\xymatrix{
	X' \ar[d]_{\pi} \ar[rr]^{\psi} \ar[drr]^{\phi_{M}} & &  \Sigma' \ar[d]^{\tau}  \\
	X  \ar@{-->}[rr]_{\phi_{K_X}} & & \Sigma
}
$$
Here $\phi_M$ is the morphism induced by the linear system $|M| = \movable|\rounddown{\pi^*K_X}|$, and $X' \stackrel{\psi}\rightarrow \Sigma' \stackrel{\tau} \rightarrow \Sigma$ is the Stein factorization of $\phi_M$. We may write
$$
\pi^*K_X = M + Z,
$$
where $Z \ge 0$ is a $\QQ$-divisor.

The main result in this section is the following.
\begin{prop} \label{prop: fibered minimal model}
	With the above notation, suppose that $p_g(X) \ge 7$, $\dim \Sigma = 2$ and $K_X^3 < \frac{4}{3}p_g(X) - 2$. Then $\deg \Sigma = p_g(X) - 2$, and $\tau$ is an isomorphism. Moreover, there exists a minimal $3$-fold $X_1$ birational to $X$ such that $X_1$ admits a fibration $f: X_1 \to \PP^1$ with general fiber $F_1$ a $(1,2)$-surface.
\end{prop}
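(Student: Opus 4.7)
The plan is to use the hypothesis $K_X^3 < \tfrac{4}{3}p_g(X) - 2$ to force $\Sigma$ to be a minimal-degree surface in $\PP^{p_g(X)-1}$, exploit the natural ruling of that scroll to produce a fibration of $X$ onto $\PP^1$, and finally take a relative minimal model. First I would verify $\deg\Sigma \geq p_g(X) - 2$ from non-degeneracy of $\Sigma \subset \PP^{p_g(X)-1}$. For the reverse inequality, I would take two general members $M_1, M_2 \in |M|$ and apply Bertini to identify the $1$-cycle $M_1 \cdot M_2 = (\deg\Sigma')\cdot F_0$, where $F_0$ is a general fiber of $\psi$ and $\deg\Sigma' = (\deg\tau)(\deg\Sigma)$. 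Since both $\pi^*K_X$ and $M$ are nef with $\pi^*K_X - M = Z \geq 0$,
\[
K_X^3 \;\geq\; (\pi^*K_X)\cdot M^2 \;=\; (\deg\Sigma')\,(\pi^*K_X \cdot F_0).
\]
An adjunction/Noether-on-a-surface argument applied to a general $S \in |M|$ should supply a sharp lower bound for $\pi^*K_X \cdot F_0$; combined with $\deg\Sigma' \geq (\deg\tau)(p_g(X)-2)$, the hypothesis will force $\deg\tau = 1$ and $\deg\Sigma = p_g(X)-2$. Consequently $\tau$ is birational, and since $\Sigma$ is then a normal minimal-degree variety (at worst a scroll with one cone singularity), $\tau$ is an isomorphism.

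\medskip

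Next, by the classical classification of non-degenerate surfaces of minimal degree, $\Sigma$ must be either the Veronese surface $V_2 \subset \PP^5$ or a rational normal scroll. Since $p_g(X) \geq 7 > 6$, the Veronese case is excluded, so $\Sigma$ is a rational scroll (possibly a cone) and carries a natural $\PP^1$-ruling $h : \Sigma \to \PP^1$. Composing with $\phi_M$, resolving the indeterminacy by a further blow-up of $X'$, and taking Stein factorization would yield a morphism $g : X'' \to \PP^1$ with connected fibers (the target remains $\PP^1$ because each ruling $\PP^1 \subset \Sigma$ has connected $\psi$-preimage).

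\medskip

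To identify the general fiber $F$ of $g$, I would note that $F^2 = 0$, so $K_F = K_{X''}|_F$ by adjunction. The canonical image $\phi_{K_X}(F)$ is a ruling line $L \cong \PP^1$, producing a pencil in $|K_F|$ and thus $p_g(F) \geq 2$. Counting hyperplane sections containing $L$ (a codimension-$2$ family in $\PP^{p_g(X)-1}$) and applying Kawamata--Viehweg vanishing to the restriction sequence will yield $p_g(F) \leq 2$. Noether for surfaces applied to the minimal model $F_{\min}$ of $F$ then forces $K_{F_{\min}}^2 \geq 1$, while a slope-type inequality for $g$ combined with the strict assumption on $K_X^3$ forces $K_{F_{\min}}^2 \leq 1$. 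Hence $F_{\min}$ is a $(1,2)$-surface. A relative MMP of $X''$ over $\PP^1$ will finally produce the desired minimal threefold $X_1$ with fibration $f : X_1 \to \PP^1$ and general fiber $F_1$ a $(1,2)$-surface; absolute minimality of $X_1$ follows from the fact that the general fiber is of general type and the base is $\PP^1$.

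\medskip

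The main obstacle I anticipate is the sharp volume estimate in the first paragraph. Because $M^3 = 0$, the entire volume $K_X^3$ has to be extracted from intersections involving $Z = \pi^*K_X - M$ and the fibers of $\psi$. Producing the precise constant $\tfrac{4}{3}$ in the lower bound for $\pi^*K_X \cdot F_0$ will require a careful adjunction on a general $S \in |M|$ and tight control of $Z$ along both $S$ and $F_0$, especially in the singular-scroll case where $\tau$ could a priori be nontrivial.
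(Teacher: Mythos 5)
Your overall architecture (minimal\-/degree scroll, ruling of the scroll gives the pencil, identify the general fibre as a $(1,2)$-surface, relative MMP) matches the paper's, but two steps as you have set them up would fail. The first is the degree computation. You propose to conclude $\deg \Sigma = p_g(X)-2$ from $K_X^3 \ge (\deg\Sigma')\,\bigl((\pi^*K_X)\cdot F_0\bigr)$ together with a lower bound on $(\pi^*K_X)\cdot F_0$. The sharp lower bound here is $(\pi^*K_X)\cdot F_0 \ge 1$, and it is attained with equality by every threefold on the Noether lines (Lemma \ref{lem: E0}(3)), so no constant $\tfrac43$ is available for this intersection number. With the bound $1$ you only get $\deg\Sigma' \le K_X^3 < \tfrac43 p_g(X)-2$; this does force $\deg\tau=1$, but it does not exclude $\deg\Sigma = p_g(X)-1$ (or larger) once $p_g(X)\ge 4$. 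The inequality that actually pins down $d=\deg\Sigma'=p_g(X)-2$ is
$$
K_X^3 \;\ge\; \bigl((\pi^*K_X)|_S\bigr)^2 \;\ge\; \tfrac14 K_{S_0}^2 \;\ge\; \tfrac23(2d-1),
$$
where the gain of $\tfrac13 d$ beyond the naive bound $d$ comes not from the fibre $C$ but from the horizontal section $\Gamma_{S_0}$ of the induced fibration $S_0 \to \PP^1$: writing $K_{S_0}\equiv 2dC_0+2\Gamma_{S_0}+(\text{vertical})$ and applying adjunction $-2=(K_{S_0}+\Gamma_{S_0})\cdot\Gamma_{S_0}$ gives $(K_{S_0}\cdot\Gamma_{S_0})\ge \tfrac{2d-4}{3}$. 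This horizontal contribution is the entire reason the constant $\tfrac43$ appears in the hypothesis, and it is absent from your plan; since everything downstream (the scroll structure, the ruling) rests on $\Sigma$ having minimal degree, the gap propagates.

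The second problem is the closing claim that absolute minimality of $X_1$ ``follows from the fact that the general fiber is of general type and the base is $\PP^1$.'' Relative nefness of $K_{X_1}$ over $\PP^1$ controls only vertical curves, and fibres of general type do not rule out a horizontal $K$-negative curve. The paper invokes \cite[Lemma 3.2]{Chen_Chen_Jiang}, whose hypothesis is the equality $\mu^*K_{F_1}=(\pi^*K_X)|_{F'}$ on a general fibre, and verifying this is a separate, nontrivial step: one writes $\mu^*K_{F_1}=(\pi^*K_X)|_{F'}+A_1-A_2$ and uses a Hodge-index/self-intersection argument to kill $A_2$ and then the difference itself. A smaller issue: your route to $p_g(F)\le 2$ by counting hyperplanes through the ruling line only bounds the image of the restriction map $H^0(K_{X'})\to H^0(K_{F'})$, not $p_g(F')$; the clean way (and the paper's) is to prove $K_{F_{\min}}^2=1$ first, via the $\QQ$-effectivity of $\bigl(1+\tfrac{1}{e+k}\bigr)(\pi^*K_X)|_{F'}-\mu^*K_{F_1}$ and intersection with $C$, and then apply the surface Noether inequality $1\ge 2p_g(F)-4$.
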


\begin{proof}
    Since $\Sigma \subseteq \PP^{p_g(X)-1}$ is non-degenerate, we have $\deg \Sigma \ge p_g(X) - 2$. In the following, we prove the proposition by steps.
	
	\textbf{Step 1}. Denote by $C$ a general fiber of $\psi$. In this step, we show that
	$$
	((\pi^*K_X) \cdot C) \ge  1.
	$$
	
	Let $S \in |M|$ be a general member. By Bertini's theorem, $S$ is smooth, and we have
	$$
	M|_S \equiv dC,
	$$
	where $d:=(\deg \tau) \cdot (\deg \Sigma) \ge p_g(X)-2$. Since $p_g(X) \ge 7$, we have
	$$
	K_X^3 < \frac{4}{3} p_g(X) - 2 < 2p_g(X) - 4.
	$$
	By \cite[Theorem 4.1]{Chen_Chen_Jiang}, we deduce that $g(C) = 2$. Let $\sigma: S \to S_0$ be the contraction onto the minimal model of $S$, and let $C_0 = \sigma_* C$. Since
	$$
	K_S=(K_{X'}+S)|_S\ge 2 M|_S \equiv 2dC \ge 2(p_g(X) - 2)C \ge 10C,
	$$
	we deduce that $K_{S_0} - 10C_0$ is pseudo-effective. In particular,
	$$
	K_{S_0}^2 \ge 10(K_{S_0} \cdot C_0) \ge 10.
	$$
	Since $g(C)=2$, we have
	$$
	(K_{S_0} \cdot C_0) = \left((\sigma^*K_{S_0}) \cdot C\right) \le (K_S \cdot C) = 2.
	$$
	By the Hodge index theorem, we deduce that $C_0^2 = 0$. This implies that $$
	\left((\sigma^*K_{S_0}) \cdot C\right) = (K_{S_0} \cdot C_0) = 2.
	$$
	By \cite[Corollary 2.3]{Chen_Chen_Jiang}, $2(\pi^*K_X)|_S - \sigma^*K_{S_0}$ is $\QQ$-effective. Thus
	$$
	\left( (\pi^*K_X) \cdot C \right) \ge \frac{1}{2} \left((\sigma^*K_{S_0}) \cdot C \right) = 1.
	$$
	
	\textbf{Step 2}. In this step, we prove that $\deg \Sigma = p_g(X) - 2$ and that $\tau$ is an isomorphism. Moreover, we construct a relatively minimal fibration from a birational model of $X$ to $\PP^1$.
	
	By the argument in the proof of \cite[Theorem 4.2]{Chen_Chen_Jiang} and the assumption, we have
	$$
	\frac{4}{3}p_g(X) - 2 > K_X^3 \ge \left((\pi^*K_X)|_S\right)^2 \ge \frac{2}{3} (2d - 1).
	$$
	Thus $d = p_g(X) - 2$, which implies that
	$$
	\deg \tau = 1, \quad \deg \Sigma = p_g(X) - 2.
	$$
	By \cite[\S 10]{Nagata}, there is a Hirzebruch surface $\mathbb{F}_e$ for some $e \ge 0$ and a morphism
	$$
	r: \mathbb{F}_e \to  \PP^{p_g(X) - 1}
	$$
	induced by the linear system $|\bm{s} + (e + k)\bm{l}|$ such that $\Sigma = r(\FF_e)$. Here $\bm{l}$ is a ruling of the natural fibration $p: \FF_e \to \PP^1$, $\bm{s}$ is a section of $p$ with $\bm{s}^2 = -e$, and $k \in \ZZ_{\ge 0}$ such that $\deg \Sigma = e + 2k$. In particular, $\Sigma$ is normal. Thus $\tau$ is an isomorphism.
	
	Replacing $X'$ by its birational modification, we may assume that there is a surjective morphism $\varphi: X' \to \FF_e$ such that $\psi = r \circ \varphi$. Thus we obtain a fibration
	$$
	f':= p \circ \varphi: X' \to \FF_e \to \PP^1
	$$
	with a general fiber $F' = \varphi^*\bm{l}$. Let $\zeta: X' \dashrightarrow X_1$ be the contraction of $X'$ onto its relative minimal model $X_1$ over $\PP^1$. Up to a birational modification, we may assume that $\zeta$ is a morphism. Then we obtain a relatively minimal fibration
	$$
	f_1: X_1 \to \PP^1
	$$
	with a general fiber $F_1$. Here $\mu:= \zeta|_{F'}: F' \to F_1$ is just the contraction onto the minimal model of $F'$.
	
	\textbf{Step 3}. In this step, we prove that $F_1$ is a $(1, 2)$-surface.
	
	Since $\dim \Sigma = 2$, the natural restriction map
	$$
	H^0(X', K_{X'}) \to H^0(F', K_{F'})
	$$
	has the image of dimension at least two. In particular, $p_g(F_1) = p_g(F') \ge 2$. To show that $F_1$ is a $(1, 2)$-surface, it suffices to show that $K_{F_1}^2 = 1$.
	
	From \textbf{Step 2} and the assumption that $p_g(X) \ge 7$, we deduce that $e+k \ge \frac{1}{2} \deg \Sigma = \frac{1}{2} p_g(X) - 1 \ge \frac{5}{2}$, i.e., $e + k \ge 3$. Also recall in \textbf{Step 2} that $M = \varphi^*(\bm{s} + (e + k) \bm{l})$. Thus $\pi^*K_X - (e+k)F' \ge 0$. By \cite[Corollary 2.3]{Chen_Chen_Jiang}, $(1 + \frac{1}{e+k})(\pi^*K_X)|_{F'} - \mu^*K_{F_1}$ is $\QQ$-effective.  In particular, $\frac{4}{3} (\pi^*K_X)|_{F'} - \mu^*K_{F_1}$ is $\QQ$-effective. On the other hand, by the assumption, we have
	$$
	\frac{4}{3} p_g(X) - 2> K_X^3 \ge d \left((\pi^*K_X) \cdot C \right) = (p_g(X) - 2) \left((\pi^*K_X) \cdot C \right).
	$$
	It follows from \textbf{Step 1} that
	$$
	\left((\mu^*K_{F_1}) \cdot C \right) \le \frac{4}{3} \left((\pi^*K_X)|_{F'} \cdot C \right) < \frac{4}{3}\cdot\frac{\frac{4}{3}p_g(X) - 2}{p_g(X) - 2} < 2,
	$$
	where the last inequality follows from the assumption that $p_g(X) \ge 7$. Let $C_1 = \mu_*C$. Then the above inequality means that  $(K_{F_1} \cdot C_1) < 2$.  Thus $(K_{F_1} \cdot C_1) = 1$. Note that $K_{F'} \ge (\pi^*K_X) |_{F'} \ge M|_{F'} \ge C$. By the Hodge index theorem and the parity, we deduce that $K_{F_1}^2 = C_1^2 = 1$ and
	$$
	K_{F_1} = \mu_*((\pi^*K_X) |_{F'} ) = C_1.
	$$

	\textbf{Step 4}. In this step, we show that $X_1$ is minimal. By \cite[Lemma 3.2]{Chen_Chen_Jiang}, it suffices to show that
	$$
	\mu^*K_{F_1} = (\pi^*K_X)|_{F'}.
	$$
	
	By the conclusion in \textbf{Step 3}, we may write
	$$
	\mu^*K_{F_1} = (\pi^*K_X) |_{F'} + A_1 - A_2,
	$$
	where $A_1$ and $A_2$ are both effective $\QQ$-divisors on $F'$ supported on the exceptional locus of $\mu$ with no common irreducible components. Thus we have
	$$
	0 =  \left((\mu^*K_{F_1}) \cdot A_2 \right) = \left( \left( (\pi^*K_X) |_{F'} + A_1 - A_2 \right) \cdot A_2  \right) \ge - A_2^2 \ge 0,
	$$
	i.e., $A_2^2 = 0$. This implies that $A_2 = 0$, and thus $\mu^*K_{F_1} \ge (\pi^*K_X)|_{F'}$. As a result, we have
	$$
	1 = (\mu^*K_{F_1})^2 \ge \left( (\pi^*K_X)|_{F'} \right)^2 \ge ((\pi^*K_X) \cdot C) \ge 1,
	$$
	which forces $(\mu^*K_{F_1})^2 = ( (\pi^*K_X)|_{F'} )^2$. By the Hodge index theorem again, we deduce that $\mu^*K_{F_1} = (\pi^*K_X)|_{F'}$. Thus the proof is completed.
\end{proof}


\section{Geometry of $3$-folds fibered by $(1, 2)$-surfaces} \label{section: geometry}

Throughout this section, let $X$ be a minimal $3$-fold of general type with a fibration
$$
f: X \to \PP^1
$$
such that the general fiber $F$ of $f$ is a $(1, 2)$-surface. We always assume that the canonical image of $X$ is a surface. In particular, $p_g(X) \ge 3$.

\subsection{General Setting} \label{subsection: general setting}
Since $X$ is minimal, by the adjunction, $F$ is also minimal. Thus $|K_F|$ has a unique base point. Consider the relative canonical map $X \dashrightarrow \PP(f_* \omega_X)$ of $X$ over $\PP^1$. It is clear that the indeterminacy locus of this rational map contains a section $\Gamma$ of $f$ such that $\Gamma \cap F$ is just the base point of $|K_F|$.

Following the notation in \S \ref{section: fibered model}, taking a birational modification $\pi: X' \to X$ as in \S \ref{modification} with respect to $K_X$, we may write
\begin{equation} \label{eq: modification}
	\pi^*K_X = M + Z,
\end{equation}
where $|M| = \movable|\rounddown{\pi^*K_X} |$ is base point free and $Z \ge 0$ is a $\QQ$-divisor. We have a similar commutative diagram
$$
\xymatrix{
	& & X' \ar[d]_{\pi} \ar[lld]_{f'} \ar[rr]^{\psi} \ar[drr]^{\phi_{M}} & &  \Sigma' \ar[d]^{\tau}  \\
	\mathbb{P}^1 & & X  \ar@{-->}[rr]_{\phi_{K_X}} \ar[ll]^f  & & \Sigma
}
$$
as in \S \ref{section: fibered model}, where $\phi_{K_X}$ is the canonical map of $X$, $\phi_M$ is the morphism induced by $|M|$, $X' \stackrel {\psi}\rightarrow \Sigma' \stackrel{\tau} \rightarrow \Sigma$ is the Stein factorization of $\phi_M$, and $f' = f \circ \pi$ is the induced fibration. Denote by $F'$ a general fiber of $f'$.

Note that $X$ has at worst terminal singularities. We may write
\begin{equation} \label{eq: exceptional divisor}
	K_{X'} = \pi^*K_X + E_{\pi},
\end{equation}
where $E_{\pi} \ge 0$ is a $\pi$-exceptional $\QQ$-divisor.

Let $C$ be a general fiber of $\psi$. Since $\dim \Sigma = 2$, $C$ is a curve, and we have
$$
M|_S \equiv dC,
$$
where $d = (\deg \tau) \cdot (\deg \Sigma) \ge p_g(X) - 2 \ge 1$.

\begin{remark} \label{rmk: d=pg-2}
	By Proposition \ref{prop: fibered minimal model}, if $p_g(X) \ge 7$ and $K_X^3 < \frac{4}{3}p_g(X) - 2$, then $\deg \tau = 1$ and $\deg \Sigma = p_g(X) - 2$. In particular, $d = p_g(X) - 2$.
\end{remark}

Recall the following two lemmas from \cite{Hu_Zhang}.

\begin{lemma}{\cite[Lemma 2.2]{Hu_Zhang}} \label{lem: g2}
	We have $|M||_{F'} = \movable |(\pi|_{F'})^*K_F|$, where $\pi|_{F'}: F' \to F$ coincides with the blow-up of the unique base point of $|K_F|$. In particular, $g(C)=2$ and there is a rational map $p: \Sigma' \dashrightarrow \PP^1$ such that $f' = p \circ \psi$.
\end{lemma}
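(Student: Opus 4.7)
The plan is to establish three linked facts in order: (i) for a general fibre $F'$, the restriction $\pi|_{F'}$ is exactly the blow-up of the unique base point of $|K_F|$ and $|M||_{F'}$ coincides with $\movable|(\pi|_{F'})^*K_F|$; (ii) the image $\psi(F') \subset \Sigma'$ is a curve; (iii) as $F'$ varies, these curves fiber $\Sigma'$ over $\PP^1$ compatibly with $f'$.

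First I would prove that the restriction
\[
H^0(X, K_X) \longrightarrow H^0(F, K_F)
\]
is surjective. If the image had dimension $\le 1$, then $\phi_{K_X}$ would send each general fibre $F$ to a single point of $\Sigma$, so $\Sigma$ would be swept out by a one-parameter family of image points, forcing $\dim \Sigma \le 1$ and contradicting the standing assumption that $\Sigma$ is a surface. Since $h^0(F, K_F) = 2$, surjectivity follows. Combined with the adjunction identity $K_X|_F = K_F$ (using $F^2 = 0$), this shows that $|K_X|$ cuts out precisely the pencil $|K_F|$ on $F$ (up to fixed vertical components which miss a general $F$), and the base locus of this pencil is a single point $x \in F$. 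By the inductive construction of $\pi$ in \S \ref{modification} as a composition of blow-ups of smooth centres lying in the evolving base loci, the only modification affecting a general $F'$ is the blow-up of (the proper transform of) $x$; all deeper centres lie in the vertical part of the base scheme and are disjoint from $F'$. Hence $\pi|_{F'}$ is exactly the blow-up of $x$, and $(\pi^* K_X)|_{F'} = (\pi|_{F'})^* K_F$. The equality $|M||_{F'} = \movable|(\pi|_{F'})^*K_F|$ now follows: the right-hand side is the base-point-free pencil obtained by resolving $|K_F|$; the left-hand side is base point free and, by the surjectivity above, contains the pulled-back pencil; both sides have $h^0 = 2$, so they agree.

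A general member of $|K_F|$ on a minimal $(1,2)$-surface is a smooth irreducible curve of genus $2$, so its proper transform under $\pi|_{F'}$ is the same. Hence $|M||_{F'}$ is a free pencil of smooth genus-$2$ curves, so $\phi_M|_{F'}(F')$ is one-dimensional, and after Stein factorization $\psi(F')$ is a curve as well. As $F'$ runs through the fibres of $f'$ the curves $\psi(F')$ cover $\Sigma'$; they cannot all coincide, else $\psi(X')$ would be a curve, contradicting $\dim \Sigma' = 2$. They therefore form a one-parameter family of curves fibering $\Sigma'$ over $\PP^1$, which yields the rational map $p: \Sigma' \dashrightarrow \PP^1$ with $f' = p \circ \psi$. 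Finally, a general fibre $C = \psi^{-1}(q)$ lies in the unique fibre $F' = (f')^{-1}(p(q))$, and inside $F'$ it is a fibre of $\psi|_{F'}$, i.e.\ a general member of $|M||_{F'}$; therefore $g(C) = 2$.

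The principal difficulty is step (i), specifically controlling the birational behaviour of $\pi$ over $F$: one has to verify by induction on the centres $W_i$ that after blowing up the base point $x$ of $|K_F|$, the proper transform of $|K_F|$ becomes base point free on the smooth surface $F'$, so that all subsequent centres in the construction of $\pi$ live in the vertical part of $\baselocus|K_X|$ and do not meet a general $F'$.
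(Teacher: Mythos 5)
The paper does not reprove this lemma; it imports it verbatim from \cite[Lemma 2.2]{Hu\_Zhang}, so there is no in-paper argument to compare against. Your overall strategy is the right one, and the first part is essentially sound: the surjectivity of $H^0(X,K_X)\to H^0(F,K_F)$ (forced by $\dim\Sigma=2$ and $h^0(F,K_F)=2$), together with adjunction $K_X|_F=K_F$ and the classical structure of the canonical pencil of a minimal $(1,2)$-surface (no fixed part, one simple base point, free after one blow-up), does show that the horizontal base locus is the section of base points and that the modification $\pi$ restricted to a general $F'$ is a single point blow-up, whence $|M||_{F'}=\movable|(\pi|_{F'})^*K_F|$.

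There is, however, a genuine gap in your deduction of the factorization $f'=p\circ\psi$. From ``the curves $\psi(F'_t)$ cover $\Sigma'$ and do not all coincide'' you conclude that they ``form a one-parameter family of curves fibering $\Sigma'$ over $\PP^1$''. That inference is false in general: a one-dimensional algebraic family of curves can cover a surface with several members passing through a general point (e.g.\ the family $y^2-t^2x^2=1$ in the plane has two members through a general point), and in that situation a general fiber of $\psi$ would be a multisection of $f'$ and no map $p$ with $f'=p\circ\psi$ would exist. What is actually needed is that a general fiber $C$ of $\psi$ is vertical for $f'$, and this must be proved by an intersection computation: you have already shown $(M|_{F'})^2=\left(\mu^*K_F-E_x\right)^2=K_F^2-1=0$, hence $(M^2\cdot F')=0$; since $M^2$ is represented by $d\,C$ for $C$ a general fiber of $\psi$ (two general members of $|M|$ cut out $d=(\deg\tau)(\deg\Sigma)$ general fibers of $\psi$), this gives $(C\cdot F')=0$, and as $C$ is irreducible this forces $f'(C)$ to be a point, so $f'$ is constant on the connected fibers of $\psi$ and factors through it. Note that this step is not cosmetic: your computation of $g(C)=2$ relies on identifying $C$ with a general member of the pencil $|M||_{F'}$ inside a single fiber $F'$, which presupposes exactly the verticality of $C$ that is missing.
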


\begin{lemma}{\cite[Lemma 2.4]{Hu_Zhang}} \label{lem: E0}
	There exists a unique $\pi$-exceptional prime divisor $E_0$ on $X'$ such that
	\begin{itemize}
		\item [(1)] $\coeff_{E_0} (Z) = \coeff_{E_0}(E_{\pi}) = 1$;
		\item [(2)] $\pi(E_0)=\Gamma$, $\phi_M (E_0) = \Sigma$;
		\item [(3)] $(E_0 \cdot C) = (Z \cdot C) = (E_{\pi} \cdot C) = ((\pi^*K_X) \cdot C) = 1$.
	\end{itemize}
\end{lemma}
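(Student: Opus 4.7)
The plan is to identify $E_0$ as the unique $\pi$-exceptional prime divisor dominating $\Gamma$ and restricting to the exceptional $(-1)$-curve on a general fiber $F'$ of $f'$. My approach is to first compute all the intersection numbers in (3) by restricting to $F'$, then to use these numbers to extract existence and uniqueness of $E_0$, and finally to pin down the two coefficients in (1) via the local geometry of the blow-up along $\Gamma$.

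The first step is a direct computation on $F'$. By Lemma \ref{lem: g2}, $\pi|_{F'}\colon F' \to F$ is the blow-up of the unique base point $p$ of $|K_F|$, with exceptional $(-1)$-curve $e$. Since $F$ is a $(1,2)$-surface and $C$ is a general fiber of the induced genus-$2$ pencil on $F'$, the elementary identities $K_{F'} = (\pi|_{F'})^*K_F + e$, $M|_{F'} \equiv C \equiv (\pi|_{F'})^*K_F - e$, $e\cdot C = 1$, $C^2 = 0$, and $K_{F'}\cdot C = 2$ all hold. Combining with the adjunction identities $K_{X'}|_{F'} = K_{F'}$ and $\pi^*K_X|_{F'} = (\pi|_{F'})^*K_F$, I obtain
\[
(\pi^*K_X\cdot C) = 1,\quad (M\cdot C) = 0,\quad (Z\cdot C) = 1,\quad (E_\pi\cdot C) = 1,
\]
which is the content of (3) once $E_0$ is identified.

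Next I would extract $E_0$. Decomposing $E_\pi = \sum_i a_i E_i$ and $Z = \sum_j b_j Z_j$ into primes with positive coefficients, and noting that each $(E_i \cdot C)$ and $(Z_j\cdot C)$ is a non-negative integer, the equalities $(E_\pi\cdot C) = (Z\cdot C) = 1$ force exactly one prime in each decomposition to meet $C$. Because $C$ is general in $F'$ and $(\pi|_{F'})^{-1}(p) = e$, the component meeting $C$ must be $\pi$-exceptional over $\Gamma$ and restrict to a positive multiple of $e$ on $F'$. Matching the two special components identifies a single prime divisor $E_0$ with $\pi(E_0) = \Gamma$; since $E_0$ meets the general fiber $C$ of $\psi$ in one point, $\psi|_{E_0}$ is generically finite and dominant onto $\Sigma'$, giving $\phi_M(E_0) = \tau(\Sigma') = \Sigma$.

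The crux, and the main obstacle, is showing that $\coeff_{E_0}(E_\pi) = \coeff_{E_0}(Z) = (E_0\cdot C) = 1$ rather than a numerically compatible triple such as $(1/m,\,1/m,\,m)$ with $m\ge 2$. I would exploit the freedom in the construction of $\pi$ in \S\ref{modification} to arrange that, after the resolution $\alpha\colon V_0 \to X$, the first blow-up resolving the base locus along $\Gamma$ is an ordinary blow-up of a smooth curve contained in the smooth locus of $V_0$; this is possible because $X$ has only isolated terminal singular points while $\Gamma$ meets the general smooth fiber $F$ transversally at the smooth point $p$. Such a blow-up has discrepancy exactly $1$ and produces an exceptional divisor that restricts transversally to $e$ on $F'$, which I identify with $E_0$. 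This forces $\coeff_{E_0}(E_\pi) = 1$ and $(E_0\cdot C) = 1$. Finally, $\coeff_{E_0}(Z) = \coeff_{E_0}(\pi^*K_X) - \coeff_{E_0}(M) = 1 - 0 = 1$, where the second term vanishes because $|M|$ is base point free, and the first equals $1$ because $\Gamma$ is a simple base curve of $|K_X|$, a simplicity inherited from the base point of $|K_F|$ being a simple base point.
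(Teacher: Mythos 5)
The paper does not actually prove this lemma: it is quoted verbatim from \cite[Lemma 2.4]{Hu_Zhang} and used as a black box, so there is no in-paper argument to compare yours against. Judged on its own terms, your proof is essentially sound and follows what is surely the intended route: restrict everything to a general fiber $F'$, where Lemma \ref{lem: g2} gives $\pi|_{F'}\colon F'\to F$ as the blow-up of the base point of $|K_F|$ with exceptional curve $e$, so that $E_\pi|_{F'}=K_{F'}-(\pi|_{F'})^*K_F=e$ and $Z|_{F'}=(\pi|_{F'})^*K_F-M|_{F'}=e$ as actual divisors (using $h^0(F',\CO_{F'}(e))=1$), and all four intersection numbers in (3) drop out of $C\sim(\pi|_{F'})^*K_F-e$, $C^2=0$, $(e\cdot C)=1$.

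Two remarks on the places where your write-up is soft. First, your sentence that the equalities $(E_\pi\cdot C)=(Z\cdot C)=1$ by themselves ``force exactly one prime in each decomposition to meet $C$'' is not a valid deduction, since the coefficients $a_i$, $b_j$ need not be integers; what actually forces uniqueness of the horizontal component is that $E_\pi|_{F'}$ and $Z|_{F'}$ are equal to the \emph{reduced, irreducible} curve $e$, so that two distinct horizontal components would each have to restrict to a positive multiple of $e$, whence both would contain the surface swept out by the curves $e$ and therefore coincide. You do eventually use this, but the logic should be ordered so that the reduction to $F'$ comes first. Second, for the crux $\coeff_{E_0}(E_\pi)=1$, you do not need to ``arrange'' the modification $\pi$ so that the first blow-up is the ordinary blow-up of $\Gamma$; the cleaner and choice-free argument is that the unique horizontal component $E_0$ satisfies $a_0\cdot k=1$ where $E_0|_{F'}=k\,e$ with $k\in\ZZ_{>0}$, while its center $\pi(E_0)=\Gamma$ is a curve not contained in the (finite) singular locus of the terminal $3$-fold $X$, so its discrepancy satisfies $a_0=\coeff_{E_0}(E_\pi)\ge 1$; together these force $a_0=k=1$, and then $\coeff_{E_0}(Z)=1$ follows from $Z|_{F'}=e=E_0|_{F'}$ rather than from the somewhat vague appeal to ``$\Gamma$ being a simple base curve of $|K_X|$''. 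With these repairs the argument is complete.
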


\begin{lemma} \label{lem: nef Hodge bundle}
	The sheaf $f_* \omega_X$ is locally free of rank two over $\PP^1$, and it is nef. Moreover, $f_* \omega_X$ is not ample if and only if $\Sigma$ is a cone over a smooth rational curve of degree $p_g(X) - 2$.
\end{lemma}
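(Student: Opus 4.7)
My plan is to reduce the statement to an analysis of the splitting type of $f_*\omega_X$ on $\PP^1$. First I would note that $f_*\omega_X$ is torsion-free on the smooth curve $\PP^1$, hence locally free, with rank $p_g(F) = 2$, since the general fiber is a $(1,2)$-surface. Thus I can write $f_*\omega_X \cong \CO_{\PP^1}(a) \oplus \CO_{\PP^1}(b)$ with $a \ge b$, and $\PP(f_*\omega_X) \cong \FF_{a-b}$.

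The crucial step is to prove $b \ge 0$, which is equivalent to nefness on $\PP^1$. My approach is to use the factorization of the canonical map of $X$ through the relative canonical map:
$$
X \dashrightarrow \PP(f_*\omega_X) \dashrightarrow \PP^{p_g(X)-1},
$$
where the second arrow is induced by $|\CO_{\PP(f_*\omega_X)}(1)|$, whose space of sections is naturally identified with $H^0(\PP^1, f_*\omega_X) = H^0(X, \omega_X)$. Consequently the canonical image $\Sigma$ coincides with the image of $\PP(f_*\omega_X)$ under this second arrow. If instead $b \le -1$, then $H^0(\CO(b)) = 0$, so the evaluation $H^0(f_*\omega_X) \to (f_*\omega_X)_t$ at a general $t \in \PP^1$ has image of dimension at most one. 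Equivalently, the restriction of $|\CO(1)|$ to a general fiber of $\PP(f_*\omega_X) \to \PP^1$ consists of proportional linear forms, so the second arrow contracts that fiber to a point, forcing $\dim \Sigma \le 1$ and contradicting the standing hypothesis. Hence $b \ge 0$ and $f_*\omega_X$ is nef.

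For the ampleness dichotomy I would analyze the same factorization in the two remaining cases. When $b \ge 1$, $f_*\omega_X$ is ample on $\PP^1$, so $\CO_{\PP(f_*\omega_X)}(1)$ is very ample and the second arrow is a closed embedding; in particular $\Sigma \cong \FF_{a-b}$ is smooth, hence not a cone. Conversely, when $b = 0$ (whence $a = p_g(X) - 2 \ge 1$, since $a = b = 0$ would give $p_g(X) = 2$), the negative section $\bm{s}$ of $\FF_a$, corresponding to the quotient $\CO(a) \oplus \CO \twoheadrightarrow \CO$, satisfies $\CO_{\PP(f_*\omega_X)}(1)|_{\bm{s}} \cong \CO_{\PP^1}$; so $\bm{s}$ is contracted to a point, while the complementary section maps isomorphically onto a smooth rational normal curve of degree $a = p_g(X) - 2$ and each ruling maps as a line through that point. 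Thus $\Sigma$ is precisely the cone over a smooth rational normal curve of degree $p_g(X) - 2$.

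The step I expect to require the most care is setting up the factorization of $\phi_{K_X}$ through $\PP(f_*\omega_X)$ cleanly enough that the image of $\PP(f_*\omega_X)$ under $|\CO(1)|$ is genuinely identified with the canonical image $\Sigma$; this identification underlies both the contradiction in the $b \le -1$ case and the cone description in the $b = 0$ case. Once it is in place, the remaining work is a direct-sum computation on $\PP^1$ that I would not dwell on further.
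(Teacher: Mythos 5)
Your proposal is correct and follows essentially the same route as the paper: local freeness and rank two from torsion-freeness on $\PP^1$, the bound $b \ge 0$ deduced from $\dim \Sigma = 2$, and the ample/cone dichotomy read off from the image of $\PP(f_*\omega_X)$ under $|\CO_{\PP(f_*\omega_X)}(1)|$. The one step you flag as delicate --- identifying that image with $\Sigma$ --- is precisely where the paper spends its effort (showing on a resolution $X'$ that the relative canonical map is a morphism with $|M| = \phi^*|H|$, a fact it reuses later); your lighter identification via dominance of $X \dashrightarrow \PP(f_*\omega_X)$, which holds because $p_g(F) = 2$ makes the canonical map of $F$ surject onto $\PP^1$, also suffices for this lemma.
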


\begin{proof}
	It is clear that $f_* \omega_X$ is torsion free over $\PP^1$ of rank $p_g(F) = 2$. Thus it is locally free. We may write
	$$
	f_* \omega_X = \CO_{\PP^1}(a) \oplus \CO_{\PP^1}(b)
	$$
	with $a, b \in \ZZ$ and $a \ge b$. Since the canonical image $\Sigma$ is a surface, we deduce that $b \ge 0$. Otherwise, $\Sigma \simeq \PP^1$. Therefore, $a \ge b \ge 0$, and $f_* \omega_X$ is nef.
	
	Consider the relative canonical map
	$$
	\phi: X' \dashrightarrow V:=\PP(f'_* \omega_{X'})
	$$
	of $X'$ over $\PP^1$. It is known that $\phi$ is induced by the linear system $|K_{X'} + tF'|$ for a sufficiently large integer $t$. Since $f'_*$ is left exact, we know that $f'_* \CO_{X'}(M)$ is a subsheaf of  $f'_* \omega_{X'}$. Note that $f'_*\CO_{X'}(M)$ is also a direct sum of two line bundles over $\PP^1$ and $h^0(X', M) = h^0(X', K_{X'})$. It follows that $f'_* \CO_{X'}(M) = f'_* \omega_{X'}$. By the projection formula, we have $f'_*\CO_{X'}(M + tF') = f'_*\CO_{X'}(K_{X'} + tF')$. We deduce that $h^0(X', M + tF') = h^0(X', K_{X'} + tF')$. In particular, $\movable|K_{X'} + tF'| = |M + tF'|$ is base point free. Thus $\phi$ is a morphism. Since $\phi$ is induced by the natural morphism 
	$f'^*f'_*\omega_{X'}\to \omega_{X'}$. We have $K_{X'}\ge\phi^*H$, where $H$ is a relative hyperplane section of $V$. As $f_*\omega_X$ is nef over $\PP^1$, $f_*\omega_X$ is generated by global sections. Since $X$ is terminal, we have $f'_* \omega_{X'} = f_* \omega_X$. It follows that $|H|$ is base point free  and $h^0(V, H) = h^0(\PP^1, f_* \omega_X) = p_g(X) = h^0(X', M)$. In particular, we have
	$$
	|M| = \phi^*|H|.
	$$ 
	As a result, $\Sigma$ is just the image of $V$ under the morphism induced by $|H|$. Since $V$ is a Hirzebruch surface, we know that $\Sigma$ is a cone if and only if $H$ is not ample, which is equivalent to the non-ampleness of $f_* \omega_X$. Thus the proof is completed.
\end{proof}

\begin{lemma}\label{lem: H1H2}
	We have $h^1(X, \CO_X) = h^2(X, \CO_X)=0$. In particular,  $\chi(\omega_X)=p_g(X)-1$.
\end{lemma}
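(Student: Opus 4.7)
The plan is to reduce to a smooth model, compute all direct images $R^q f_* \omega_X$, and read off the cohomology of $\omega_X$ from the Leray spectral sequence; the vanishings for $\CO_X$ will then follow by Serre duality. Concretely, I take a resolution $\pi \colon \tilde X \to X$ and set $\tilde f = f \circ \pi$. Because $X$ has terminal, hence rational, singularities, the comparisons $h^i(X, \CO_X) = h^i(\tilde X, \CO_{\tilde X})$ and (via Grauert--Riemenschneider) $h^j(X, \omega_X) = h^j(\tilde X, \omega_{\tilde X})$ hold. Serre duality on the smooth projective $\tilde X$ then identifies the two target vanishings with $h^1(\tilde X, \omega_{\tilde X}) = h^2(\tilde X, \omega_{\tilde X}) = 0$.

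For the higher direct images $R^q \tilde f_* \omega_{\tilde X}$, three ingredients suffice. First, Lemma \ref{lem: nef Hodge bundle} already tells us $R^0 \tilde f_* \omega_{\tilde X} = f_* \omega_X \cong \CO_{\PP^1}(a) \oplus \CO_{\PP^1}(b)$ with $a, b \ge 0$, so in particular $H^1(\PP^1, R^0 \tilde f_* \omega_{\tilde X}) = 0$. Second, since the general fiber $F$ is a $(1,2)$-surface with $h^1(F, K_F) = q(F) = 0$, Koll\'ar's torsion-freeness theorem forces $R^1 \tilde f_* \omega_{\tilde X} = 0$ (a torsion-free sheaf on $\PP^1$ of generic rank zero). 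Third, since $\tilde f$ is flat onto the smooth curve $\PP^1$, relative Grothendieck duality combined with the projection formula identifies $R^2 \tilde f_* \omega_{\tilde X}$ with $\omega_{\PP^1} = \CO_{\PP^1}(-2)$, a line bundle with no global sections.

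With these inputs, the Leray spectral sequence for $\tilde f$ degenerates at $E_2$ on the one-dimensional base and produces short exact sequences
\begin{gather*}
0 \to H^1(\PP^1, R^0 \tilde f_* \omega_{\tilde X}) \to H^1(\tilde X, \omega_{\tilde X}) \to H^0(\PP^1, R^1 \tilde f_* \omega_{\tilde X}) \to 0, \\
0 \to H^1(\PP^1, R^1 \tilde f_* \omega_{\tilde X}) \to H^2(\tilde X, \omega_{\tilde X}) \to H^0(\PP^1, R^2 \tilde f_* \omega_{\tilde X}) \to 0,
\end{gather*}
all of whose outer terms vanish by the above. Hence $h^1(X, \CO_X) = h^2(X, \CO_X) = 0$. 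The final identity $\chi(\omega_X) = p_g(X) - 1$ is immediate from $\chi(\omega_X) = -\chi(\CO_X)$ together with $h^3(X, \CO_X) = h^0(X, \omega_X) = p_g(X)$. The one step that deserves careful bookkeeping is the identification $R^2 \tilde f_* \omega_{\tilde X} \cong \omega_{\PP^1}$; everything else is a routine assembly of Koll\'ar's theorem and nefness of the Hodge bundle.
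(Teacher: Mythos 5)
Your proof is correct and follows essentially the same route as the paper: Kollár's torsion-freeness theorem to get $R^1f_*\omega = 0$ and $R^2f_*\omega = \omega_{\PP^1}$, nefness of $f_*\omega_X$ from Lemma \ref{lem: nef Hodge bundle} to kill $H^1(\PP^1, f_*\omega_X)$, and the Leray spectral sequence over the curve base. The only difference is that you pass to a resolution $\tilde X$ and use rational singularities plus Grauert--Riemenschneider before applying Serre duality, whereas the paper applies Kollár's theorem and Serre duality directly on the terminal (hence Cohen--Macaulay) $X$; this is a matter of bookkeeping, not of substance.
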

\begin{proof}
	By \cite[Theorem 2.1]{Kollar_2}, $R^1f_*\omega_X$ and $R^2f_*\omega_X$ are both torsion free sheaves. Now $q(F)=0$. We deduce that $R^1f_* \omega_X = 0$. Moreover, $R^2f_*\omega_X = \omega_{\PP^1}$. Thus
	$$
	h^1(X, \CO_X) =h^2(X,\omega_X) =h^0(\PP^1, \omega_{\PP^1}) + h^1(\PP^1, R^1f_*\omega_X)=0.
	$$
	By Lemma \ref{lem: nef Hodge bundle}, $h^1(\PP^1, f_* \omega_X) = 0$. It follows that
	$$
	h^2(X, \CO_X) = h^1(X, \omega_X) = h^1(\PP^1, f_*\omega_X) + h^0(\PP^1, R^1f_*\omega_X) = 0.
	$$
	Finally, we conclude that
	$$
	\chi(\omega_X) = p_g(X) + h^1(X, \CO_X) - h^2(X, \CO_X) - 1 = p_g(X)-1.
	$$
	The proof is completed.	
\end{proof}

\subsection{Two key estimates} \label{subsection: Noether (1,2)-surface}

In this subsection, we prove two important estimates.

\begin{prop} \label{prop: Noether (1,2)-surface}
	The following inequalities hold:
	\begin{itemize}
		\item [(1)] $\displaystyle{(K_X \cdot \Gamma) \ge \frac{1}{2}  \roundup{\frac{2(d - 2)}{3}}}$;
		\item [(2)] $\displaystyle{K_X^3 \ge d + \frac{1}{2} \roundup{\frac{2(d - 2)}{3}}}$.
	\end{itemize}
    Moreover, if the equality in (2) holds, so does the equality in (1).
\end{prop}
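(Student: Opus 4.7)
My plan is to carry out all intersection computations on the birational model $X'$ from \S\ref{subsection: general setting}, with the $\pi$-exceptional divisor $E_0$ sitting over the canonical section $\Gamma$ playing the central role. The starting observation is that the ruled surface $\pi|_{E_0}\colon E_0\to\Gamma$ has general fibre $e = F'\cap E_0$ (the $(-1)$-curve of $\pi|_{F'}\colon F'\to F$), which satisfies $(\pi^*K_X)|_{E_0}\cdot e = 0$ and $M|_{E_0}\cdot e = 1$; consequently $(\pi^*K_X)|_{E_0}$ is numerically equivalent to $(K_X\cdot\Gamma)\,e$ on $E_0$.

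I would first establish the auxiliary estimate $K_X^3\ge d+(K_X\cdot\Gamma)$. For a general $S\in|M|$, writing $(\pi^*K_X)|_S = M|_S + Z|_S$ and using $M|_S\equiv dC$, $(M|_S\cdot C)=0$, and $(\pi^*K_X\cdot C)=1$ from Lemma \ref{lem: E0}(3), one gets $((\pi^*K_X)|_S)^2 = d + (\pi^*K_X)|_S\cdot Z|_S$. By Lemma \ref{lem: E0}(1), $Z\ge E_0$, so by nefness of $\pi^*K_X$ the second summand is at least $(\pi^*K_X)\cdot E_0\cdot M$, which restricted to $E_0$ and using the observation above equals $((\pi^*K_X)|_{E_0})\cdot(M|_{E_0}) = (K_X\cdot\Gamma)(e\cdot M|_{E_0}) = K_X\cdot\Gamma$. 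Combined with $K_X^3 = (\pi^*K_X)^3 \ge ((\pi^*K_X)|_S)^2$, this yields the auxiliary estimate.

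The substantive step is proving (1). Setting $c = K_X\cdot\Gamma$, I would analyse $E_0$ after contracting the fibral $(-1)$-curves of $\pi|_{E_0}$ to reach a Hirzebruch model $\FF_n$ with $n\equiv d\pmod{2}$; there the relevant classes take the form $M|_{E_0}\equiv \bm{s}+\tfrac{n+d}{2}\bm{l}$, $(\pi^*K_X)|_{E_0}\equiv c\,\bm{l}$, and $E_0|_{E_0}\equiv -\bm{s}+\beta\,\bm{l}$ for some integer $\beta$. Effectivity of $Z-E_0$ restricted to $E_0$ gives one linear constraint in $c$, $n$, $\beta$, while effectivity of $E_\pi-E_0$ restricted to $E_0$ (computed via the adjunction identity $K_{X'}|_{E_0}=K_{E_0}-E_0|_{E_0}$) gives a second; pairing these with the base-point-freeness of $M|_{E_0}$ (which forces $d\ge n$) and using the integrality of $\beta$ together with the parity $n+d\equiv 0\pmod{2}$, one extracts the lower bound $c\ge\tfrac{1}{2}\lceil 2(d-2)/3\rceil$. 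Part (2) then follows by adding (1) to the auxiliary estimate, and equality in (2) forces equality in both summands, hence in (1).

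The main obstacle is the delicate second step. Both effectivity constraints manifest as upper bounds on $\beta$, so extracting a genuine lower bound on $c$ requires balancing them against one another using the discrepancy structure of $\pi$; moreover the ceiling refinement beyond the naive real-valued estimate $c\ge(d-2)/3$, which is strictly stronger whenever $d\not\equiv 2\pmod{3}$, is the technical crux and is exactly what produces the ``congruence modulo $3$'' phenomenon highlighted in the introduction.
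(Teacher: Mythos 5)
Your reduction of (2) to (1) is correct and is essentially the paper's: writing $K_X^3\ge \left((\pi^*K_X)\cdot M^2\right)+\left((\pi^*K_X)|_S\cdot Z|_S\right)=d+\left((\pi^*K_X)|_S\cdot Z|_S\right)$ and using $Z\ge E_0$ gives $K_X^3\ge d+(K_X\cdot\Gamma)$, and equality in (2) then forces equality in (1). The gap is in part (1) itself, which is the entire content of the proposition. Your two effectivity constraints on $E_0\cong\FF_n$ read, in your notation, $c\ge\frac{n+d}{2}+\beta$ (from $(Z-E_0)|_{E_0}\ge 0$) and $c\le -(n+2)-2\beta$ (from $(E_\pi-E_0)|_{E_0}\ge 0$ via adjunction); as you note, both are \emph{upper} bounds on $\beta$. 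Two upper bounds on a free integer parameter impose no condition whatsoever on $c$: for any $d$ and $n$ one can take $\beta$ sufficiently negative and $c=0$ and satisfy both. So no lower bound on $c=(K_X\cdot\Gamma)$ can be extracted from this system, with or without the integrality and parity refinements, and the phrase ``balancing them against one another using the discrepancy structure of $\pi$'' is precisely the proof that is missing. Nothing in your setup supplies a lower bound on $\beta$, nor an equation (as opposed to an inequality) tying $\beta$ to $d$; the normal bundle of $E_0$ is only constrained to be $\pi$-negative, i.e.\ $(E_0|_{E_0}\cdot e)=-1$, which fixes the $\bm{s}$-coefficient but leaves $\beta$ unbounded below.

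The paper obtains the bound by working not on $E_0$ but on a general member $S\in|M|$ and its minimal model $\sigma:S\to S_0$. The adjunction $K_S=(K_{X'}+S)|_S$ together with $\pi^*K_X=M+Z$, $K_{X'}=\pi^*K_X+E_\pi$ and Lemma \ref{lem: E0} yields
$$
K_{S_0}\equiv 2dC_0+2\Gamma_{S_0}+\sigma_*(E_V+Z_V),
$$
with $\sigma_*(E_V+Z_V)$ effective and vertical for the genus-$2$ fibration $S_0\to\PP^1$. The decisive input is then the genus formula for the rational section $\Gamma_{S_0}$: substituting the displayed expression into $-2=(K_{S_0}\cdot\Gamma_{S_0})+\Gamma_{S_0}^2$ produces $-2=\frac{3}{2}(K_{S_0}\cdot\Gamma_{S_0})-d-\frac{1}{2}\left(\sigma_*(E_V+Z_V)\cdot\Gamma_{S_0}\right)$, and here the correction term has a \emph{known sign} (a section meets an effective vertical divisor non-negatively), which is exactly the one-sided control your constraints on $E_0$ fail to provide. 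This gives $(K_{S_0}\cdot\Gamma_{S_0})\ge\frac{2(d-2)}{3}$, the round-up comes from integrality of $(K_{S_0}\cdot\Gamma_{S_0})$ on the minimal model, and the final factor $\frac{1}{2}$ comes from \cite[Corollary 2.3]{Chen_Chen_Jiang}, i.e.\ the $\QQ$-effectivity of $2(\pi^*K_X)|_S-\sigma^*K_{S_0}$. None of these three ingredients appears in your plan, so the argument for (1) would need to be rebuilt along these lines rather than completed on $E_0$.
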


\begin{proof}
	Let $E_0$ be the unique $\pi$-exceptional prime divisor as in Lemma \ref{lem: E0}. Take a general member $S \in |M|$. By Bertini's theorem and Lemma \ref{lem: E0} (2), $S$ is smooth and $S|_{E_0}$ is irreducible. By Lemma \ref{lem: g2}, $S \cap F' = C$. Now a general fiber of $f'|_S: S \to \PP^1$ is $S \cap F' = C$, and it is just a general fiber of $\psi|_S$. We conclude that $f'|_S$ and $\psi|_S$ are identical to each other.  
	
	Denote $\Gamma_S = E_0|_S$. By Lemma \ref{lem: E0} (2) and (3), $\Gamma_S$ is a section of $\psi|_S$. Therefore, by Lemma \ref{lem: E0} (1), we may write
	\begin{equation} \label{eq: EVZV}
		E_{\pi}|_S = \Gamma_S + E_V, \quad Z|_S = \Gamma_S + Z_V.
	\end{equation}
	Here $E_V \ge 0$ and $Z_V \ge 0$ are $\QQ$-divisors on $S$. By Lemma \ref{lem: E0} (3),
	$$
	(E_V \cdot C) = \left((E_{\pi} - E_0) \cdot C\right) = 0, \quad (Z_V \cdot C) = \left((Z-E_0) \cdot C\right)=0.
	$$
	We deduce that both $E_V$ and $Z_V$ are vertical with respect to $\psi|_S$.
	
	By the adjunction formula, \eqref{eq: modification}, \eqref{eq: exceptional divisor} and \eqref{eq: EVZV},
	\begin{equation}\label{eq: K_S}
		K_S = (K_{X'} + S)|_S \equiv 2dC + 2 \Gamma_S + E_V + Z_V.
	\end{equation}
	Denote by $\sigma: S \to S_0$ the contraction onto its minimal model $S_0$. By \cite[Corollary 2.3]{Chen_Chen_Jiang}, we have
	\begin{equation} \label{eq: restriction comparison}
		\left. \left(\pi^*K_X\right) \right|_S \sim_{\QQ} \frac{1}{2} \sigma^* K_{S_0} + H_S,
	\end{equation}
	where $H_S \ge 0$ is a $\QQ$-divisor on $S$. Therefore, by Lemma \ref{lem: E0} (3), $((\sigma^*K_{S_0}) \cdot C) \le 2((\pi^*K_X) \cdot C) = 2$. Let $C_0 = {\sigma}_*C$ and $\Gamma_{S_0} = {\sigma}_*\Gamma_S$. Then we have $(K_{S_0} \cdot C_0) \le 2$. On the other hand, by \eqref{eq: K_S}, we have
	\begin{align}\label{eq: K_S0}
		K_{S_0} \equiv 2dC_0 + 2\Gamma_{S_0} + {\sigma}_*(E_V + Z_V).
	\end{align}
	We deduce that $(K_{S_0} \cdot C_0) \ge 2d C_0^2 \ge 2 C_0^2$. By parity, it follows that $(K_{S_0} \cdot C_0) = 2$ and $C_0^2 = 0$. In particular, the fibration $\psi|_S$ descends to a fibration $S_0 \to \mathbb{P}^1$ whose general fiber is $C_0$ with $g(C_0)=2$. Moreover,
	$$
	(H_S \cdot C) = \left( \left(\pi^*K_X \right) \cdot C \right)-\frac{1}{2}(K_{S_0} \cdot C_0) = 0.
	$$
	This implies that $H_S$ is vertical with respect to $\psi|_S$.
	
	Since $\Gamma_{S_0}$ is a section of the fibration $S_0 \to \mathbb{P}^1$, we have $g(\Gamma_{S_0}) = 0$. By the adjunction formula on $S_0$ and \eqref{eq: K_S0}, we have
	\begin{align}
		-2 & =(K_{S_0} \cdot \Gamma_{S_0}) + \Gamma_{S_0}^2 \nonumber \\
		& = (K_{S_0} \cdot \Gamma_{S_0}) + \frac{1}{2} \left( \left(K_{S_0} - 2d  C_0 - {\sigma}_*(E_V + Z_V) \right) \cdot \Gamma_{S_0}\right) \label{eq: remark 1} \\
		& =\frac{3}{2} (K_{S_0} \cdot \Gamma_{S_0}) - d  - \frac{1}{2} \left({\sigma}_*(E_V+Z_V) \cdot \Gamma_{S_0} \right) . \nonumber
	\end{align}
	We deduce that
	\begin{equation}\label{eq: K_S0Gamma_S0}
		(K_{S_0} \cdot \Gamma_{S_0}) =\frac{2}{3}d-\frac{4}{3}+\frac{1}{3}\left({\sigma}_*(E_V+Z_V) \cdot \Gamma_{S_0} \right)
		\ge \roundup{\frac{2 (d-2)}{3}},
	\end{equation}
	where the last inequality holds because ${\sigma}_*(E_V + Z_V)$ is vertical with respect to the fibration $S_0 \to \mathbb{P}^1$ and $(K_{S_0}\cdot\Gamma_{S_0})$ is an integer. Together with \eqref{eq: restriction comparison} and the fact that $(H_S \cdot \Gamma_S) \ge 0$, we deduce that
	$$
	(K_X \cdot \Gamma)  = \left((\pi^*K_X)|_S \cdot \Gamma_S \right) \ge \frac{1}{2} \left((\sigma^* K_{S_0}) \cdot \Gamma_S \right)
	\ge \frac{1}{2} \roundup{\frac{2 (d-2)}{3}}
	$$
	Thus the inequality (1) is proved.
	
	For (2), note that
	$$
	K_X^3   \ge ((\pi^* K_X) \cdot M^2) + \left( (\pi^*K_X)|_S \cdot Z|_S \right).
	$$
	By Lemma \ref{lem: E0} (3),
	\begin{equation} \label{eq: K_XM^2}
		\left((\pi^*K_X) \cdot M^2 \right) = d \left((\pi^*K_X) \cdot C \right) = d.
	\end{equation}
	By (1), we have
	\begin{equation} \label{eq: K_XMZ}
		\left((\pi^*K_X)|_S \cdot Z|_S \right) \ge \left((\pi^*K_X)|_S \cdot \Gamma_S \right) = (K_X \cdot \Gamma) \ge \frac{1}{2} \roundup{\frac{2 (d-2)}{3}}.
	\end{equation}
	Combine the above inequalities together, and we deduce that
	$$
	K_X^3 \ge  d + \frac{1}{2} \roundup{\frac{2 (d-2)}{3}}.
	$$
	This proves (2).
\end{proof}

\begin{prop}\label{prop: Noether equality (1,2)-surface}
	Keep the same notation as in the proof of Proposition \ref{prop: Noether (1,2)-surface}. Suppose that the equality in Proposition \ref{prop: Noether (1,2)-surface} (2) holds. Then we have the following equalities:
	\begin{itemize}
		\item [(1)] $\displaystyle K_{S_0}^2 = 4d + 2 \roundup{\frac{2 (d - 2)}{3}}$;
		\item [(2)] $(K_{S_0} \cdot \sigma_*(E_V+Z_V)) = 0$;
		\item [(3)] $\sigma^*K_{S_0} \sim_{\QQ} 2(\pi^*K_X)|_S$.
	\end{itemize}
	Moreover, if $d \equiv 2$ $(\mathrm{mod}$ $3)$, then $\sigma_*(E_V+Z_V)=0$ and $K_{S_0} \equiv 2dC_0 + 2\Gamma_{S_0}$.
		
		
\end{prop}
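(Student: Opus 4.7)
My plan is to exploit the equality in Proposition~\ref{prop: Noether (1,2)-surface}(2): every intermediate inequality in its proof must now be an equality. Reading these off, one obtains
\[
(\pi^*K_X)^2 \cdot Z = 0, \quad (\pi^*K_X)|_S \cdot Z_V = 0, \quad (H_S \cdot \Gamma_S) = 0, \quad (K_{S_0} \cdot \Gamma_{S_0}) = \roundup{\tfrac{2(d-2)}{3}},
\]
where the last two come from \eqref{eq: restriction comparison} and \eqref{eq: K_S0Gamma_S0}.

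The heart of the argument is a double computation of $K_{S_0}^2$. On one hand, expanding via \eqref{eq: K_S0} gives
\[
K_{S_0}^2 = 4d + 2\roundup{\tfrac{2(d-2)}{3}} + (K_{S_0} \cdot \sigma_*(E_V+Z_V)).
\]
On the other hand, using \eqref{eq: restriction comparison} together with $(\pi^*K_X)^2 \cdot Z = 0$ yields
\[
K_X^3 = ((\pi^*K_X)|_S)^2 = \tfrac{1}{4} K_{S_0}^2 + (\sigma^*K_{S_0} \cdot H_S) + H_S^2.
\]
Substituting the first formula into the second and using the hypothesized value of $K_X^3$ yields the identity
\[
\tfrac{1}{4} (K_{S_0} \cdot \sigma_*(E_V+Z_V)) + (\sigma^*K_{S_0} \cdot H_S) + H_S^2 = 0. \qquad (\star)
\]

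To close (2) and (3) simultaneously, I would exploit the nefness of $(\pi^*K_X)|_S$: since $H_S \ge 0$,
\[
0 \le ((\pi^*K_X)|_S \cdot H_S) = \tfrac{1}{2}(\sigma^*K_{S_0} \cdot H_S) + H_S^2,
\]
so $H_S^2 \ge -\tfrac{1}{2}(\sigma^*K_{S_0} \cdot H_S)$. Plugging this into $(\star)$ gives $\tfrac{1}{4}(K_{S_0} \cdot \sigma_*(E_V+Z_V)) + \tfrac{1}{2}(\sigma^*K_{S_0} \cdot H_S) \le 0$, and the nefness of $K_{S_0}$ makes both summands non-negative, so both must vanish. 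This proves claim (2), and claim (1) is then immediate from the first formula for $K_{S_0}^2$. Going back to $(\star)$ gives $H_S^2 = 0$; combined with $((\pi^*K_X)|_S \cdot H_S) = 0$ and the bigness of $(\pi^*K_X)|_S$ (since $((\pi^*K_X)|_S)^2 = K_X^3 > 0$), the Hodge index theorem forces $H_S \equiv 0$, and effectivity of $H_S$ then gives $H_S = 0$ as a divisor, which proves claim (3).

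For the ``moreover'' statement, $d \equiv 2 \pmod{3}$ makes $\roundup{\tfrac{2(d-2)}{3}} = \tfrac{2(d-2)}{3}$, so \eqref{eq: K_S0Gamma_S0} forces $(\sigma_*(E_V+Z_V) \cdot \Gamma_{S_0}) = 0$. Combined with claim (2) and the verticality $(\sigma_*(E_V+Z_V) \cdot C_0) = 0$, the expansion \eqref{eq: K_S0} yields $(\sigma_*(E_V+Z_V))^2 = 0$; by the Zariski lemma applied to the relatively minimal fibration $S_0 \to \PP^1$, $\sigma_*(E_V+Z_V)$ must be a non-negative $\QQ$-sum of full fibers, but zero intersection with the section $\Gamma_{S_0}$ forces it to vanish. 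The numerical equivalence for $K_{S_0}$ then follows from \eqref{eq: K_S0}. The main obstacle I anticipate is extracting the identity $(\star)$ and finding the right combination of the nefness inequality with Hodge index: without the preliminary slack $((\pi^*K_X)|_S \cdot H_S) \ge 0$, one obtains only $H_S^2 \le 0$, which is too weak to force the vanishing of all three terms separately.
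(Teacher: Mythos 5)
Your proof is correct and follows essentially the same route as the paper: both expand $((\pi^*K_X)|_S)^2$ via \eqref{eq: restriction comparison} and $K_{S_0}^2$ via \eqref{eq: K_S0}--\eqref{eq: K_S0Gamma_S0}, force everything to vanish using nefness of $K_{S_0}$ and $(\pi^*K_X)|_S$ against the hypothesized value of $K_X^3$, and finish (3) and the ``moreover'' part with the Hodge index theorem. The only difference is presentational -- you package the argument as an exact identity $(\star)$ plus term-by-term non-negativity, whereas the paper runs the same content as a chain of inequalities that must all become equalities.
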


\begin{proof}
	By \eqref{eq: K_S0} and \eqref{eq: K_S0Gamma_S0}, we have
	\begin{align*}
		K_{S_0}^2 & = 2d(K_{S_0} \cdot C_0) + 2(K_{S_0} \cdot \Gamma_{S_0}) + (K_{S_0} \cdot \sigma_*(E_V+Z_V)) \\
		& \ge 4d + 2\roundup{\frac{2 (d-2)}{3}} + \left(K_{S_0}\cdot\sigma_*(E_V+Z_V)\right).
	\end{align*}
	Together with \eqref{eq: restriction comparison}, we deduce that
	$$
	K_X^3 \ge ((\pi^*K_X)|_S)^2\ge \frac{1}{4}K_{S_0}^2 \ge d+\frac{1}{2} \roundup{\frac{2 (d-2)}{3}} + \frac{1}{4}\left(K_{S_0}\cdot\sigma_*(E_V+Z_V)\right).
	$$
	Now by our assumption, $K_X^3 = d+\frac{1}{2} \roundup{\frac{2 (d-2)}{3}}$. We conclude that all the above inequalities become equalities. In particular, we have $K_{S_0}^2 = 4K_X^3$ and $\left(K_{S_0}\cdot\sigma_*(E_V+Z_V)\right) = 0$. Thus the equalities (1) and (2) hold. Moreover, we have $4((\pi^*K_X)|_S)^2 = 2((\pi^*K_X)|_S\cdot\sigma^*K_{S_0}) =K_{S_0}^2$. 
	
	By the Hodge index theorem, \eqref{eq: restriction comparison} and the above equalities, we deduce that $H_S \equiv 0$. Since $H_S$ is effective, we have $H_S=0$. Thus
	$$
	\sigma^*K_{S_0}\sim_{\QQ} 2(\pi^*K_X)|_S.
	$$
	
	Suppose that $d \equiv 2$ $(\mathrm{mod}$ $3)$. Then $\frac{2d}{3} - \frac{4}{3} = \roundup{\frac{2 (d-2)}{3}}$. Note that \eqref{eq: K_S0Gamma_S0} now becomes an equality. Thus
	$$
	\left({\sigma}_*(E_V+Z_V) \cdot \Gamma_{S_0} \right) = 0
	$$
	Moreover, by \eqref{eq: K_S0} and the equality (2), we conclude that
	$$
	\left({\sigma}_*(E_V + Z_V) \right)^2 =  - \left({\sigma}_*(E_V + Z_V)  \cdot (2\Gamma_{S_0} + 2d C_0) \right) = 0.
	$$
	Thus $\sigma_*(E_V+Z_V)=0$. By \eqref{eq: K_S0}, we have $K_{S_0} \equiv 2dC_0 + 2\Gamma_{S_0}$. The whole proof is completed.
\end{proof}

\begin{prop}\label{prop: upper bound of P2}
	One of the following inequalities holds:
	\begin{itemize}
		\item [(1)] $\displaystyle{P_2(X) \le \rounddown{2K_X^3} + \rounddown{2K_X^3 - \frac{5(p_g(X)-1)}{3}} + 7}$;
		
		\item [(2)] $\displaystyle{K_X^3 \ge \frac{4}{3}p_g(X) - \frac{17}{6}}$.
	\end{itemize}
\end{prop}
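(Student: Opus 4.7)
I would argue by contradiction, assuming $(2)$ fails (so $K_X^3 < \tfrac{4}{3}p_g(X) - \tfrac{17}{6}$) and deducing $(1)$. The strategy is to bound $P_2(X)$ by two successive restrictions to a general canonical surface $S \in |M|$, then estimate the resulting surface cohomologies using the genus-$2$ fibration $\psi|_S \colon S \to \PP^1$.

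First, writing $\pi^*K_X \sim M + Z$ and restricting twice to general $S, S' \in |M|$ produces
\[
P_2(X) \;\le\; h^0(X', 2Z) + h^0(S', (M + 2Z)|_{S'}) + h^0(S, 2\pi^*K_X|_S),
\]
since $2\pi^*K_X - S \sim M + 2Z$ and $2\pi^*K_X - S - S' \sim 2Z$. By Lemma \ref{lem: E0}, the only horizontal component of $Z$ is the $\pi$-exceptional divisor $E_0$; the rest is vertical and supported on degenerate fibers of $f'$, so $h^0(X', 2Z)$ is controllable by a small absolute constant.

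Next, the crucial input $(\pi^*K_X \cdot C) = 1$ from Lemma \ref{lem: E0} shows that each of $(2\pi^*K_X|_S)|_C$ and $((M + 2Z)|_{S'})|_C$ is a degree-$2$ divisor on the genus-$2$ fiber $C$. Using the decomposition $K_S \equiv 2dC + 2\Gamma_S + E_V + Z_V$ from the proof of Proposition \ref{prop: Noether (1,2)-surface}, one checks each restriction is actually $\sim K_C$, so its $h^0$ is at most $h^0(C,K_C)=2$. Consequently, the pushforwards $(\psi|_S)_*\CO_S(D)$ are rank-$2$ locally free sheaves on $\PP^1$, and $h^0(S, D)$ is bounded above by the degree of this pushforward plus at most $2$. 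Those degrees are in turn controlled by $(\pi^*K_X|_S)^2 \le K_X^3$ together with $(\pi^*K_X|_S \cdot \Gamma_S) = (K_X \cdot \Gamma)$, which is bounded below by $\tfrac{1}{2}\roundup{\tfrac{2(d-2)}{3}}$ via Proposition \ref{prop: Noether (1,2)-surface}(1).

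Combining these with Remark \ref{rmk: d=pg-2} ($d = p_g(X)-2$) should produce bounds of the form $h^0(S, 2\pi^*K_X|_S) \le \rounddown{2K_X^3} + c_1$ and $h^0(S', (M+2Z)|_{S'}) \le \rounddown{2K_X^3 - \tfrac{5(p_g(X)-1)}{3}} + c_2$; the shift $\tfrac{5(p_g(X)-1)}{3}$ tracks the $2d$ contribution from $M|_S \equiv dC$ absent on the second restriction, combined with the $\roundup{\tfrac{2(d-2)}{3}}$ coming from $\Gamma_S$. The main obstacle will be pinning down the precise constants so that $c_1 + c_2 + h^0(X', 2Z) \le 7$, which requires careful case-by-case accounting of how the $\rounddown{\cdot}$ and $\roundup{\cdot}$ floors interact, together with control of the vertical contributions $E_V, Z_V$ to $\Gamma_S$ (using Proposition \ref{prop: Noether equality (1,2)-surface}-style information). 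The threshold $K_X^3 = \tfrac{4}{3}p_g(X) - \tfrac{17}{6}$ in $(2)$ is precisely where Proposition \ref{prop: Noether (1,2)-surface}(1) ceases to imply the desired bound; if that threshold is crossed, case $(2)$ automatically holds and there is nothing to prove.
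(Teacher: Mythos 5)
Your decomposition of $P_2(X)$ by restricting twice to general members of $|M|$ is essentially the paper's starting point (the paper writes $P_2(X)=u_0+u_1+h^0(X', 2K_{X'}-2M)$ and bounds $u_0$, $u_1$ by $\rounddown{2K_X^3}+2$ and $\rounddown{2K_X^3-\frac{5(p_g(X)-1)}{3}}+4$ respectively, using Ohno's inequality when the image of $\phi_i$ is a surface and the genus-$2$ fibration when it is a curve). But there is a genuine gap at the third term. You assert that $h^0(X',2Z)$ ``is controllable by a small absolute constant'' because the non-exceptional part of $Z$ is vertical and supported on degenerate fibers. This is not justified and is in fact the whole difficulty: a vertical divisor can contain many (possibly fractional multiples of) fibers of $f'$, and $h^0$ of such a divisor grows with the number of fibers it dominates, with no a priori bound. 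This is exactly why the statement is a dichotomy rather than a single inequality. The paper's mechanism is: if $h^0(X',2K_{X'}-2M)\ge 2$, then the moving part $M_2$ of $|2K_{X'}-2M|$ satisfies $(((\pi|_{F'})^*K_F)\cdot M_2|_{F'})=0$, so by the Hodge index theorem $M_2|_{F'}=0$ and hence $M_2\ge F'$; this gives $2\pi^*K_X\ge 2M+F'$, and the extra term $\frac12((\pi^*K_X)^2\cdot F')$ in the volume computation yields precisely $K_X^3\ge\frac43 p_g(X)-\frac{17}{6}$, i.e.\ alternative (2). In the remaining case $h^0(X',2K_{X'}-2M)=1$ and (1) follows with the constant $6+1=7$. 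Your framing treats (2) as a vacuous fallback (``if that threshold is crossed there is nothing to prove''), so you never use the hypothesis $\neg(2)$ to control the residual term, and the argument as sketched cannot close.

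A secondary issue: your bound ``$h^0(S,D)$ is at most the degree of $(\psi|_S)_*\CO_S(D)$ plus $2$'' is false for a general rank-$2$ bundle on $\PP^1$ (a summand of very negative degree lowers the degree without lowering $h^0$), so it would need a semipositivity input that you do not supply; the paper avoids this by using $(M_i|_S)^2\ge 2h^0(S,M_i|_S)-4$ for two-dimensional images and a direct count of the number of fibers for one-dimensional images.
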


\begin{proof}
	Set $|M_0|=\mathrm{Mov}|2K_{X'}|$, $|M_1|=\mathrm{Mov}|2K_{X'}-M|$ and $|M_2|=\mathrm{Mov}|2K_{X'}-2M|$. Replacing $X'$ by a further blow-up, we may assume that $|M_0|$, $|M_1|$ and $|M_2|$ are all base point free. By Bertini's theorem, we may take a smooth general member $S\in|M|$. It is easy to see that
	\begin{equation} \label{eq: decomp P_2}
		P_2(X)=u_0+u_1+h^0(X', 2K_{X'}-2M),
	\end{equation}
	where
	$$
	u_i = \dim \mathrm{Im} \left(H^0(X', M_i) \to H^0(S, M_i|_S)\right) \quad (i=0, 1).
	$$
	Note that we always have
	\begin{equation}\label{eq: step 0}
		K_X^3\ge((\pi^*K_X)|_S)^2.
	\end{equation}
	
	\textbf{Step 1}. In this step, we prove that
	\begin{equation} \label{eq: step 1}
		u_0 \le \rounddown{2K_X^3} + 2.
	\end{equation}
	
	Consider the complete linear system $|M_0|_S|$. By our assumption, it induces a morphism $\phi_0: S \to \PP^{h^0(S, M_0|_S) - 1}$. If $\dim \phi_0(S) = 2$, by \cite[Lemma 1.8]{Ohno},
	$$
	4 \left((\pi^*K_X)|_S\right)^2 \ge (M_0|_S)^2 \ge 2h^0(S, M_0|_S) - 4.
	$$
	If $\dim \phi_0(S) = 1$, since $M_0|_S \ge M|_S$, the general fiber of $\phi_0$ is identical to $C$. Now $M_0|_S \equiv b C$, where $b \ge h^0(S, M_0|_S) - 1$. Thus by Lemma \ref{lem: E0} (3),
	$$
	2 \left((\pi^*K_X)|_S \right)^2 \ge b \left((\pi^*K_X) \cdot C \right) \ge  h^0(S, M_0|_S) - 1.
	$$
	Thus in both cases, we always have
	$$
	2 \left((\pi^*K_X)|_S \right)^2 \ge h^0(S, M_0|_S) - 2.
	$$
	Together with \eqref{eq: step 0}, it follows that
	$$
	u_0 \le h^0(S, M_0|_S) \le 2 \left((\pi^*K_X)|_S \right)^2 + 2\le 2K_X^3+2.
	$$
	Since $u_0$ is an integer, we deduce that $u_0\le \rounddown{2K_X^3}+2$.
	
	\textbf{Step 2}. In this step, we prove that
	\begin{equation} \label{eq: step 2}
		u_1 \le \rounddown{2K_X^3 - \frac{5(p_g(X) - 1)}{3}} + 4.
	\end{equation}
	
	Similarly as in \textbf{Step 1}, the complete linear system $|M_1|_S|$ induces a morphism $\phi_1: S \to \PP^{h^0(S, M_1|_S) - 1}$. If $\dim \phi_1(S) = 2$, by \cite[Lemma 1.8]{Ohno}, $(M_1|_S)^2 \ge 2h^0(S, M_1|_S) - 4$. It follows that
	$$
	4\left((\pi^*K_X)|_S\right)^2 \ge (M_1|_S + M|_S)^2 \ge 2h^0(S, M_1|_S) - 4 + 2(M_1|_S \cdot M|_S).
	$$
	Since $\phi_1$ does not contract $C$, the linear system $|M_1|_S||_C$ induces a finite morphism on $C$. Note that $g(C) = 2$ by Lemma \ref{lem: g2}. We deduce that $(M_1|_S \cdot C) \ge 2$.  Recall that  we have $M|_S \equiv dC $, where $d\ge p_g(X)-2$. Thus
	$$
	(M_1|_S \cdot M|_S) \ge d (M_1|_S \cdot C) \ge 2d \ge 2p_g(X)-4 .
	$$
	Combine the above inequalities with \eqref{eq: step 0} together. We deduce that
	$$
	h^0(S, M_1|_S) \le 2 \left((\pi^*K_X)|_S \right)^2 + 2 - (2p_g(X) - 4) \le 2K_X^3+6-2p_g(X).
	$$
	Since $p_g(X) \ge 3$, we deduce that
	$$
	u_1 \le h^0(S, M_1|_S) \le 2K_X^3+\frac{17}{3}-\frac{5}{3}p_g(X) = 2K_X^3 - \frac{5(p_g(X) - 1)}{3} + 4.
	$$
	Since $u_1$ is an integer, we know that \eqref{eq: step 2} holds in this case.
	
	If $\dim \phi_1(S) = 1$, since $M_1|_S \ge M|_S$, the general fiber of $\phi_1$ is just $C$. Thus $M_1|_S \equiv bC$, where $b \ge h^0(S, M_1|_S) - 1$. Note that $M|_S \equiv d C$ as before. Therefore, the divisor $2(\pi^*K_X)|_S - (b +d)C$ on $S$ is pseudo-effective. Let $\sigma: S \to S_0$ be the contraction morphism onto the minimal model of $S$.  As has been proved in Proposition \ref{prop: Noether (1,2)-surface}, the fibration $\psi|_S: S\to \mathbb{P}^1$ descends to a fibration $S_0\to\mathbb{P}^1$ with a general fiber $C_0 = \sigma_*C$. In the meantime, by \eqref{eq: K_S0}, we may write
	$$
	K_{S_0} \equiv 2dC_0 + 2\Gamma_{S_0}+\Delta_0,
	$$ where $\Gamma_{S_0}$ is a section of the fibration $S_0 \to \PP^1$ and $\Delta_0$ is an effective divisor which is vertical with respect to the fibration $S_0\to\mathbb{P}^1$. By the adjunction formula, we have
	$$
	-2=(K_{S_0}\cdot\Gamma_{S_0})+\Gamma_{S_0}^2=2d+3\Gamma_{S_0}^2+(\Delta_0\cdot\Gamma_{S_0}).
	$$
	We deduce that
	$$
	\Gamma_{S_0}^2=-\frac{2}{3}d-\frac{2}{3}-\frac{(\Gamma_{S_0}\cdot\Delta_0)}{3}.
	$$
	This implies that the divisor
	$$
	K_{S_0} - \frac{2(d-2)}{3} C_0 \equiv \frac{2}{3}
	\left( \left( 2d+2 \right) C_0 + 3\Gamma_{S_0}+\frac{3}{2}\Delta_0 \right)
	$$
	on $S_0$ is nef. Therefore, it follows that
	$$
	\left(\left(\sigma^*\left(K_{S_0} - \frac{2(d-2)}{3} C_0\right) \right) \cdot \left(2(\pi^*K_X)|_S - (b + d)C \right) \right) \ge 0,
	$$
	i.e.,
	$$
	2 \left( (\pi^*K_X)|_S  \cdot (\sigma^*K_{S_0}) \right) \ge \frac{4(d-2)}{3}((\pi^*K_X)\cdot C)+ (b + d) \left((\sigma^*K_{S_0}) \cdot C \right).
	$$
	Note that $(K_{S_0} \cdot C_0) = 2$ and $((\pi^*K_X)\cdot C)=1$ by Proposition \ref{prop: Noether (1,2)-surface} and Lemma \ref{lem: E0}. Thus the above inequality becomes
	$$
	\left(  (\pi^*K_X)|_S  \cdot (\sigma^*K_{S_0}) \right) \ge \frac{2(d-2)}{3} + (b+d) = b + \frac{5}{3}d - \frac{4}{3}.
	$$
	On the other hand, by \eqref{eq: restriction comparison} and \eqref{eq: step 0}, we have
	$$
	\left(  (\pi^*K_X)|_S  \cdot (\sigma^*K_{S_0}) \right) \le 2 \left((\pi^*K_X)|_S \right)^2 \le 2K_X^3.
	$$
	The above two inequalities imply that
	$$
	b \le 2K_X^3+\frac{4}{3}-\frac{5}{3}d \le 2K_X^3 - \frac{5(p_g(X) - 1)}{3} + 3,
	$$
	where the last inequality holds since $d \ge p_g(X)-2$. It follows that
	$$
	u_1 \le h^0(S, M_1|_S) \le b + 1 \le 2K_X^3 - \frac{5(p_g(X) - 1)}{3} + 4.
	$$
	Again, since $u_1$ is an integer, \eqref{eq: step 2} holds also in this case.
	
	\textbf{Step 3}. In this step, we prove that if $h^0(X', M_2) \ge 2$, then 
	$$
	h^0(X', M_2 - F') > 0.
	$$
	
	Suppose that $h^0(X', M_2) \ge 2$. Then we have $2 \pi^*K_X \ge 2M+M_2$. Thus
	$$
	2(\pi^*K_X)|_{F'}\ge 2M|_{F'}+{M_2}|_{F'}.
	$$
	By Lemma \ref{lem: g2}, $M|_{F'}\sim C$ and $(\pi^*K_X)|_{F'} = (\pi|_{F'})^*K_F$. We deduce that $(((\pi|_{F'})^*K_F) \cdot  M_2|_{F'})=0$. Since $|M_2|$ is base point free, we conclude $M_2|_{F'}=0$ by the Hodge index theorem. Note that we have the exact sequence
	$$
	0 \to H^0(X', M_2 - F') \to H^0(X', M_2) \to H^0(F', M_2|_{F'}).
	$$
	Thus the result follows.

	\textbf{Step 4}. In this step, we finish the whole proof of the proposition.
	
	By \eqref{eq: decomp P_2}, \eqref{eq: step 1} and \eqref{eq: step 2}, we have
	$$
	P_2(X)\le\rounddown{2K_X^3}+\rounddown{2K_X^3 - \frac{5(p_g(X) - 1)}{3}} + 6 + h^0(X', 2K_{X'}-2M).
	$$
	If $h^0(X', 2K_{X'}-2M) = 1$, then the above inequality is identical to the inequality (1) in the proposition.
	
	Suppose that $h^0(X', 2K_{X'}-2M)\ge 2$. By \textbf{Step 3}, $h^0(X', M_2-F') > 0$. Thus we have
	$$
	2\pi^*K_X\ge 2M + M_2 \ge 2M + F'.
	$$
	Note that by \eqref{eq: K_XM^2}, \eqref{eq: K_XMZ} and the fact that $d \ge p_g(X) - 2$, we have
	\begin{align*}
		\left((\pi^*K_X)^2\cdot M \right) & = ((\pi^*K_X) \cdot M^2) + \left( (\pi^*K_X)|_S \cdot Z|_S \right)\\
		& \ge p_g(X) + \frac{1}{2}\roundup{\frac{2 (p_g(X) - 4)}{3}} - 2.
	\end{align*}
	The above two inequalities imply that
	\begin{align*}
		K_X^3 & \ge \left((\pi^*K_X)^2\cdot M \right)+\frac{1}{2} \left((\pi^*K_X)^2 \cdot F' \right) \\
		& \ge p_g(X) + \frac{1}{2} \roundup{\frac{2 (p_g(X) - 4)}{3}} - \frac{3}{2} \\
		& \ge \frac{4}{3} p_g(X) - \frac{17}{6}.
	\end{align*}
	This is the inequality (2) in the proposition. The proof is completed.
\end{proof}

\subsection{A special case} \label{subsection: pencil}
In this subsection, we consider the case when
$$
f_*\omega_X = \CO_{\PP^1} (a) \oplus \CO_{\PP^1}
$$
with $a = p_g(X) - 2$. Since $p_g(X) \ge 3$, we have $a \ge 1$.

By Lemma \ref{lem: nef Hodge bundle}, $\Sigma$ is a cone, and is birational to $\FF_a$. Moreover, $\Sigma$ is normal. Thus $\tau$ is an isomorphism. Now we have the following commutative diagram:
$$
\xymatrix{
	X' \ar@{-->}[rr]^{\psi_0} \ar[rrd]_{\psi = \phi_M} & & \FF_a \ar[rr]^{p} \ar[d] & & \mathbb{P}^1 \\
	& & \Sigma \ar@{-->}[rru] & &
}
$$
where $\FF_a \to \Sigma$ is the blow-up of the cone singularity $v$ of $\Sigma$, $\psi_0$ is induced by this blow-up. Let $\bm{s}$ be the section on $\FF_a$ with $\bm{s}^2 = -a$ and $\bm{l}$ be a ruling on $\FF_a$.

\begin{lemma} \label{lem: psi_0}
	The rational map $\psi_0$ is a morphism. Moreover, $p \circ \psi_0 = f'$ and
	\begin{equation} \label{eq: M}
		|M| = \psi_0^*|\bm{s} + a\bm{l}|.
	\end{equation}
\end{lemma}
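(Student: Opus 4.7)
The plan is to identify the rational map $\psi_0$ with the morphism $\phi: X' \to V := \PP(f'_* \omega_{X'})$ constructed in the proof of Lemma \ref{lem: nef Hodge bundle}. Since $X$ has only terminal singularities, $f'_* \omega_{X'} = f_* \omega_X = \CO_{\PP^1}(a) \oplus \CO_{\PP^1}$, and therefore $V \cong \FF_a$ with the natural projection identified with $p: \FF_a \to \PP^1$. The construction of $\phi$ is relative over $\PP^1$, so automatically $p \circ \phi = f'$, and Lemma \ref{lem: nef Hodge bundle} supplies the key identity $|M| = \phi^*|H|$ for $H$ the relative hyperplane section class on $V$.

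First I would identify $H$ with $\bm{s} + a\bm{l}$ on $\FF_a$. Using the short exact sequence
$$
0 \to \CO_{\FF_a}(a\bm{l}) \to \CO_{\FF_a}(\bm{s} + a\bm{l}) \to \CO_{\bm{s}}\bigl((\bm{s} + a\bm{l})|_{\bm{s}}\bigr) \to 0,
$$
together with $((\bm{s} + a\bm{l}) \cdot \bm{s}) = 0$, a short computation yields $h^0(\FF_a, \bm{s} + a\bm{l}) = (a+1) + 1 = p_g(X) = h^0(V, H)$, so $H \sim \bm{s} + a\bm{l}$. Moreover, the morphism $\FF_a \to \Sigma$ cut out by $|\bm{s} + a\bm{l}|$ is precisely the birational contraction of $\bm{s}$ to the cone vertex $v$, matching the map $\FF_a \to \Sigma$ in the commutative diagram above the lemma.

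Next I would check that $\phi$ coincides with $\psi_0$. The composition $X' \xrightarrow{\phi} \FF_a \to \Sigma$ is induced by $\phi^*|H| = |M|$, so it equals $\phi_M = \tau \circ \psi$. Because $\Sigma$ is normal, $\tau$ is an isomorphism, and this composition agrees with $\psi$ after identifying $\Sigma'$ with $\Sigma$. Since $\FF_a \to \Sigma$ is an isomorphism away from $\bm{s}$, any rational lift of $\psi$ through $\FF_a \to \Sigma$ is uniquely determined on this dense open set; but $\psi_0$ is by definition such a lift. Hence $\phi = \psi_0$ as rational maps, and $\psi_0$ extends to the morphism $\phi$.

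The conclusions $p \circ \psi_0 = f'$ and $|M| = \psi_0^*|\bm{s} + a\bm{l}|$ then follow immediately from the corresponding identities for $\phi$ together with the first step. The only substantive input is the identification $H \sim \bm{s} + a\bm{l}$, which is a routine calculation on $\FF_a$; every other assertion is a formal consequence of Lemma \ref{lem: nef Hodge bundle} and the normality of $\Sigma$. I expect no serious obstacle, since the essential analytic work was already carried out in Lemma \ref{lem: nef Hodge bundle}.
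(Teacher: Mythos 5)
Your proposal is correct and is precisely the argument the paper intends: its proof of this lemma is the one-line remark that the statement ``is actually implied by the proof of Lemma \ref{lem: nef Hodge bundle}'', and you have simply unpacked that implication (identifying $\psi_0$ with the relative canonical morphism $\phi: X' \to \PP(f'_*\omega_{X'}) \cong \FF_a$, matching $H$ with $\bm{s}+a\bm{l}$, and using normality of $\Sigma$ to pin down the lift). The only cosmetic point is that the step ``$h^0(V,H)=h^0(\FF_a,\bm{s}+a\bm{l})$, so $H \sim \bm{s}+a\bm{l}$'' should be phrased via the fact that $H$ has fibre degree one, hence $H \sim \bm{s}+k\bm{l}$ for some $k$, after which the $h^0$ count forces $k=a$.
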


\begin{proof}
	This is actually implied by the proof of Lemma \ref{lem: nef Hodge bundle}.
\end{proof}

	
	

Recall that by Lemma \ref{lem: E0},  there is a unique $\pi$-exceptional prime divisor $E_0$ satisfying the condition therein.

\begin{lemma}\label{lem: D0}
	There exists a unique prime divisor $D_0$  such that
	\begin{itemize}
		\item [(1)] $\coeff_{D_0} (\psi_0^*\bm{s}) = 1$;
		\item [(2)] $(D_0 \cdot E_0 \cdot F')=1$ and $((\pi^*K_X) \cdot D_0 \cdot F')=1$.
	\end{itemize}
\end{lemma}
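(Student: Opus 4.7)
The plan is to identify $D_0$ as the unique ``horizontal'' prime component of the effective Cartier divisor $\psi_0^*\bm{s}$ with respect to $f'=p\circ\psi_0:X'\to\PP^1$, and then to verify the two conditions by restricting everything to a general fibre $F'$ of $f'$. Any prime component $D$ of $\psi_0^*\bm{s}$ maps into $\bm{s}$, and since $\dim D=2$ while $\dim\bm{s}=1$, $\psi_0(D)$ is either all of $\bm{s}$ or a single point. Because $\bm{s}$ is a section of $p$, a component with $\psi_0(D)=\bm{s}$ dominates $\PP^1$ via $f'$ (``horizontal''), whereas a component contracted to a point of $\bm{s}$ sits in a single fibre of $f'$ (``vertical''). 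A vertical $D$ satisfies $D\cap F'=\emptyset$ for a general $F'$, giving $(D\cdot E_0\cdot F')=0$ and contradicting (2); thus the uniqueness assertion reduces to the existence of a unique horizontal component.

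To produce $D_0$, fix a general $F'$, let $\bm{l}_0=\psi_0(F')$ be the ruling of $p$ it maps to, and set $q_0=\bm{s}\cap\bm{l}_0$, a single transverse point. Restricting the pullback formula for Cartier divisors gives $(\psi_0^*\bm{s})|_{F'}=(\psi_0|_{F'})^*(q_0)$, the scheme-theoretic fibre over $q_0$ of the induced genus-$2$ fibration $\psi_0|_{F'}:F'\to\bm{l}_0$ (see Lemma~\ref{lem: g2}). For sufficiently general $F'$ this fibre is a reduced irreducible curve, which I denote $C_{q_0}$, so $(\psi_0^*\bm{s})|_{F'}=[C_{q_0}]$; writing $\psi_0^*\bm{s}=\sum n_iD_i$ and using that vertical components restrict to zero on $F'$ while each horizontal component restricts to a positive multiple of $[C_{q_0}]$, the identity $\sum n_i(D_i|_{F'})=[C_{q_0}]$ forces a unique horizontal $D_0$ with coefficient $n_0=1$ and $D_0|_{F'}=C_{q_0}$; this settles (1). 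For (2), Lemma~\ref{lem: g2} gives $(\pi|_{F'})^*K_F=M|_{F'}+E_0|_{F'}$ (the fixed part is the exceptional $(-1)$-curve), so $(E_0|_{F'}\cdot M|_{F'})=1$ and $E_0|_{F'}$ is a section of $\psi_0|_{F'}$. Hence $(D_0\cdot E_0\cdot F')=(C_{q_0}\cdot E_0|_{F'})=1$; since $(\pi^*K_X)|_{F'}=M|_{F'}+E_0|_{F'}$ with $M|_{F'}$ itself a fibre of $\psi_0|_{F'}$, we also obtain $((\pi^*K_X)\cdot D_0\cdot F')=(M|_{F'}\cdot C_{q_0})+(E_0|_{F'}\cdot C_{q_0})=0+1=1$.

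The only substantive technical point, and what I expect to be the main obstacle, is the assertion that for a sufficiently general $F'$ the scheme-theoretic fibre $(\psi_0|_{F'})^*(q_0)$ is reduced and irreducible. Equivalently, one must show that the distinguished point $q_0=\bm{s}\cap\bm{l}_0$ avoids the (finitely many) critical values of $\psi_0|_{F'}$ for a general choice of $F'$; this should be handled by a standard semicontinuity argument applied to the family $\{\psi_0|_{F'}\}_{F'\in\PP^1}$, since the locus of $F'$ whose distinguished fibre fails to be integral forms a proper closed subset of $\PP^1$. Once this is in place, the rest of the proof consists of the routine intersection-theoretic computations above.
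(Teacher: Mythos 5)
Your overall strategy is close in spirit to the paper's, and your final intersection computations are fine, but the step you yourself flag as ``the main obstacle'' is a genuine gap, and it cannot be repaired by the semicontinuity argument you sketch. The distinguished point $q_0=\bm{s}\cap\bm{l}_0$ is \emph{never} a general point of the ruling $\bm{l}_0$: it always lies on the section $\bm{s}$, which in the situation of \S 3.3 is the exceptional curve of the blow-up $\FF_a\to\Sigma$ of the cone vertex $v$, so that $\psi_0^{-1}(\bm{s})\subseteq\psi^{-1}(v)$. In other words, as $F'$ varies the fibres $(\psi_0|_{F'})^{-1}(q_0)$ sweep out exactly the locus contracted by the canonical map, and these are precisely the fibres of $\psi_0$ over which one has no genericity. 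The locus of rulings $\bm{l}_0$ for which this fibre fails to be integral is indeed closed in $\PP^1$, but nothing prevents it from being all of $\PP^1$; semicontinuity gives openness of the integral locus in $\FF_a$, not that it meets $\bm{s}$. Without integrality of $C_{q_0}$, your identity $\sum n_i(D_i|_{F'})=[C_{q_0}]$ no longer forces a unique horizontal component with coefficient one. Relatedly, your reduction of uniqueness is too strong: the lemma only asserts uniqueness among divisors satisfying both (1) and (2), so there may well be several horizontal components of $\psi_0^*\bm{s}$, of which only one meets $E_0|_{F'}$.

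The paper's proof avoids the issue entirely by a purely numerical argument. It first observes $(\psi_0^*\bm{s})|_{F'}\equiv M|_{F'}\equiv C$ (since $(\psi_0^*\bm{l})|_{F'}=0$), whence $((\psi_0^*\bm{s})\cdot E_0\cdot F')=(E_0\cdot C)=1$ by Lemma \ref{lem: E0}(3). Then, because $\psi_0^*\bm{s}$ is Cartier and $E_0\nsubseteq\supp(\psi_0^*\bm{s})$, every coefficient in $\psi_0^*\bm{s}=\sum n_iD_i$ is a positive integer and every $(D_i\cdot E_0\cdot F')$ is a non-negative integer; the total being $1$ isolates a single $D_0$ with $n_0=1$ and $(D_0\cdot E_0\cdot F')=1$, with no claim about the structure of the fibre over $q_0$ or about $D_0$ being the only horizontal component. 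If you replace your reduced-and-irreducible claim by this integrality bookkeeping applied to $\sum n_i(D_i\cdot E_0\cdot F')=1$, the rest of your argument (including the computation of $((\pi^*K_X)\cdot D_0\cdot F')$ via $(\pi^*K_X)|_{F'}\equiv C+E_0|_{F'}$ and $(C\cdot D_0|_{F'})=0$) goes through.
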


\begin{proof}
	By the abuse of notation, we still denote by $C$ the general fiber of $\psi_0$. By Lemma \ref{lem: g2} and \ref{lem: psi_0}, we have
	$$
	C \equiv M|_{F'} \sim \left(\psi_0^*(\bm{s} + a\bm{l})\right)|_{F'} \equiv (\psi_0^*\bm{s})|_{F'}.
	$$
	Thus by Lemma \ref{lem: E0} (3), $((\psi_0^*\bm{s}) \cdot E_0 \cdot F')= (E_0 \cdot C) =1$. Since $\psi_0^*\bm{s}$ is Cartier and $E_0 \nsubseteq \mathrm{Supp}(\psi_0^*\bm{s})$, for any prime divisor $D$ with $\coeff_{D} (\psi_0^*\bm{s}) > 0$, we have $\coeff_{D}(\psi_0^*\bm{s}) \ge 1$ and $(D \cdot E_0 \cdot F')$ is a non-negative integer. Thus there exists a unique prime divisor $D_0$ with $\coeff_{D_0} (\psi_0^*\bm{s})  = 1$ such that 
	$$
	(D_0 \cdot E_0 \cdot F')=1.
	$$ 
	Note that $(\pi^*K_X)|_{F'}=(\pi|_{F'})^*K_F\equiv C+E_0|_{F'}$. Therefore,
	$$
	\left((\pi^*K_X) \cdot D_0 \cdot F'\right) = (E_0\cdot D_0\cdot F') = 1.
	$$
	The proof is completed.
\end{proof}

\begin{lemma}\label{lem: construction of effective divisor (1,2) surface}
	Let $A$ be an ample Cartier divisor on $\FF_a$. For any integer $m>0$, there exists an integer $c>0$ and an effective divisor $H_m \sim cm(K_{X'/{\FF_a}}+E_0)+ c \psi_0^*A$ such that $E_0\nsubseteq\mathrm{Supp}(H_m)$, where $K_{X'/{\FF_a}}=K_{X'} - \psi_0^*K_{\FF_a}$ and $E_0$ is the $\pi$-exceptional divisor as in Lemma \ref{lem: E0}.
\end{lemma}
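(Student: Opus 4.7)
The plan is to construct, for some $c > 0$, an effective divisor in the complete linear system $|cm(K_{X'/\FF_a}+E_0)+c\psi_0^*A|$ whose support does not contain $E_0$. The crucial structural input is that $\psi_0|_{E_0}\colon E_0\to \FF_a$ is birational: by Lemma \ref{lem: E0}(3) a general fiber $C$ of $\psi_0$ meets $E_0$ transversally in a single point, and combined with $\phi_M(E_0)=\Sigma$ from Lemma \ref{lem: E0}(2) the map $\psi_0|_{E_0}$ is generically one-to-one onto $\FF_a$.

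I first combine $\pi^*K_X=M+Z$, $M=\psi_0^*(\bm{s}+a\bm{l})$ (Lemma \ref{lem: psi_0}), $K_{X'}=\pi^*K_X+E_\pi$, and $\psi_0^*K_{\FF_a}=-2\psi_0^*\bm{s}-(a+2)F'$ to record
$$K_{X'/\FF_a}\sim_{\QQ}\psi_0^*(3\bm{s}+(2a+2)\bm{l})+Z+E_\pi.$$
Setting $L_c:=cm(K_{X'/\FF_a}+E_0)+c\psi_0^*A$, adjunction together with the birationality of $\psi_0|_{E_0}$ gives
$$L_c|_{E_0}\sim_{\QQ} cmK_{E_0}+c(\psi_0|_{E_0})^*(A-mK_{\FF_a}).$$
Since $-K_{\FF_a}=2\bm{s}+(a+2)\bm{l}$ is effective and $A$ is ample, $A-mK_{\FF_a}$ is big on $\FF_a$; pulling back by the birational $\psi_0|_{E_0}$ keeps it big on $E_0$, so $H^0(E_0,L_c|_{E_0})\neq 0$ for all sufficiently large $c$.

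Next I lift such a section to $X'$ using the restriction sequence
$$0\to \mathcal{O}_{X'}(L_c-E_0)\to \mathcal{O}_{X'}(L_c)\to \mathcal{O}_{E_0}(L_c|_{E_0})\to 0;$$
the associated cohomology sequence reduces the problem to showing $H^1(X',L_c-E_0)=0$ for $c$ in this asymptotic range. A direct calculation yields
$$L_c-E_0-K_{X'}\sim_{\QQ}\psi_0^*\bigl((cm-1)(3\bm{s}+(2a+2)\bm{l})+cA-K_{\FF_a}\bigr)+(cm-1)(Z+E_\pi+E_0).$$
After perturbing the effective summand $(cm-1)(Z+E_\pi+E_0)$ by a small positive rational scaling $\epsilon$ so that its coefficients fall in $[0,1)$, and absorbing the residue into the $\psi_0^*$-pullback part, the resulting $\QQ$-log pair becomes klt with big and nef log canonical class. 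Kawamata--Viehweg vanishing for $\QQ$-divisors then gives $H^1(X',L_c-E_0)=0$, and a section $\sigma$ of $L_c$ whose image on $E_0$ is nonzero provides $H_m:=\operatorname{div}(\sigma)$.

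The main technical obstacle is the vanishing $H^1(X',L_c-E_0)=0$: since the coefficients of $(cm-1)(Z+E_\pi+E_0)$ grow linearly in $c$, the perturbation parameter must shrink with $c$, and absorbing the residue into the pullback must be done so that the log canonical class stays big and nef. The essential feature making this feasible is that the ampleness of $A$ on $\FF_a$ supplies a strictly positive asymptotic slope in $c$ along directions transverse to $E_0$, which dominates the horizontal contributions of $Z+E_\pi+E_0$ once $c$ is large.
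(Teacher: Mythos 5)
Your strategy --- produce a nonzero section of $L_c:=cm(K_{X'/\FF_a}+E_0)+c\psi_0^*A$ on $E_0$ and lift it to $X'$ via an $H^1$ vanishing --- is genuinely different from the paper's, which simply imports \cite[Claim 4.9]{Chen_Chen_Jiang}; the cited argument rests on Viehweg-type weak positivity of the direct image $(\psi_0)_*\CO_{X'}(m(K_{X'/\FF_a}+E_0))$, whose restriction to a general fiber $C$ is the base-point-free system $|m(K_C+E_0|_C)|$ of degree $3m$ on the genus-$2$ curve $C$, so that twisting by $cA$ yields generic global generation and hence a member of $|L_c|$ not containing $E_0$. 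Your route has a gap at the crucial step that I do not see how to close. Kawamata--Viehweg requires writing $L_c-E_0-K_{X'}$ as a nef and big divisor plus a boundary with coefficients in $[0,1)$ and simple normal crossing support. In your decomposition the only nef summand is $\psi_0^*\bigl((cm-1)(3\bm{s}+(2a+2)\bm{l})+cA-K_{\FF_a}\bigr)$, which is a pullback from the surface $\FF_a$ and is therefore never big on the threefold $X'$ (its top self-intersection is zero). The remaining part $(cm-1)(Z+E_\pi+E_0)$ has coefficients growing linearly in $c$, is not nef (it contains exceptional components of negative self-intersection), and cannot be ``absorbed into the $\psi_0^*$-pullback part'': by Lemma \ref{lem: E0} (3) each of $Z$, $E_\pi$, $E_0$ meets the general fiber $C$ of $\psi_0$ with intersection number $1$, so none of them is numerically a pullback from $\FF_a$. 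No choice of the perturbation $\epsilon$ repairs this; the positivity needed transverse to $E_0$ but along the fibers of $\psi_0$ is precisely what weak positivity of the direct image supplies and what a pullback of an ample divisor from $\FF_a$ cannot.

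There is also a secondary gap in the restriction step: $H^0(E_0,L_c|_{E_0})\neq 0$ does not follow from the bigness of $(\psi_0|_{E_0})^*(A-mK_{\FF_a})$ alone, because you have discarded the summand $cmK_{E_0}$, which is very negative on the ruled surface $E_0$ (compare $c(mK+D)$ on $\PP^1\times\PP^1$ with $D$ of bidegree $(1,1)$, which has no sections for $m\ge 1$). The correct grouping is $L_c|_{E_0}=cmK_{E_0/\FF_a}+c(\psi_0|_{E_0})^*A$, which is big once one knows that $K_{E_0/\FF_a}$ is effective; that holds for a birational morphism of smooth surfaces, but $E_0$ is a priori only a prime divisor on $X'$ and its smoothness (or normality) would need justification. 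Even with this repaired, the $H^1$ vanishing remains the fatal obstruction, so the proposal does not establish the lemma.
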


\begin{proof}
	The proof of \cite[Claim 4.9]{Chen_Chen_Jiang} works verbatim in our setting, and we only need to replace $W$, $g$, $\FF_a$ and $E_0$ loc. cit. by $X'$, $\psi_0$, $\FF_a$ and $E_0$ in our context.
\end{proof}

\begin{lemma}\label{lem: weak pseudo-effective}
	For any nef $\QQ$-divisor $L$ on $X'$, we have
	$$
	\left((3\pi^*K_X-(a - 2)F') \cdot D_0\cdot L\right) \ge 0,
	$$
	where $D_0$ is the divisor as in Lemma \ref{lem: D0}.
\end{lemma}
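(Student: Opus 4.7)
The plan is to combine a short algebraic identity with the effectivity statement of Lemma \ref{lem: construction of effective divisor (1,2) surface}. First I would compute the relative canonical divisor of $\psi_0$: using $K_{\FF_a} = -2\bm{s}-(a+2)\bm{l}$, the identity $M = \psi_0^*(\bm{s}+a\bm{l})$ from Lemma \ref{lem: psi_0}, and the decompositions $\pi^*K_X = M+Z$, $K_{X'} = \pi^*K_X + E_\pi$, a direct calculation gives
\[
K_{X'/\FF_a} = 3\pi^*K_X - (a-2)F' + E_\pi - 2Z.
\]
By Lemma \ref{lem: E0}, writing $E_\pi = E_0 + E_\pi'$ and $Z = E_0 + Z'$ with $E_\pi', Z' \ge 0$ having no $E_0$-component, this rearranges to
\begin{equation*}
3\pi^*K_X - (a-2)F' = (K_{X'/\FF_a} + E_0) + 2Z' - E_\pi'. \tag{$\ast$}
\end{equation*}

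Next, I would fix an ample Cartier divisor $A$ on $\FF_a$ and apply Lemma \ref{lem: construction of effective divisor (1,2) surface}: for every integer $m > 0$ there is a positive integer $c$ and an effective divisor $H_m \sim cm(K_{X'/\FF_a}+E_0) + c\psi_0^*A$ with $E_0 \not\subseteq \supp(H_m)$. Setting $J_m := H_m + 2cmZ'$ (again effective and not containing $E_0$), identity $(\ast)$ becomes
\[
cm\bigl(3\pi^*K_X - (a-2)F'\bigr) + cm\,E_\pi' + c\,\psi_0^*A \sim J_m.
\]
I would then intersect with $D_0 \cdot L$, divide by $cm$, and let $m \to \infty$ so that the $\psi_0^*A$-contribution disappears, obtaining
\[
\bigl(3\pi^*K_X - (a-2)F'\bigr)\cdot D_0 \cdot L + (E_\pi' \cdot D_0 \cdot L) = \lim_{m \to \infty} \tfrac{1}{cm}\bigl(J_m \cdot D_0 \cdot L\bigr).
\]
The right-hand limit is nonnegative because each $J_m$ is effective and $L|_{D_0}$ is nef, provided $D_0$ is not a component of $J_m$; I would ensure this by exploiting $D_0 \neq E_0$ together with the explicit property $\coeff_{D_0}(\psi_0^*\bm{s}) = 1$ from Lemma \ref{lem: D0} and a suitable choice of the ample divisor $A$.

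The main obstacle is then absorbing the nonnegative correction $(E_\pi' \cdot D_0 \cdot L)$ on the left-hand side. My strategy is to reduce to the case $E_\pi' = Z' = 0$ by choosing $\pi: X' \to X$ as the single blow-up of the smooth curve $\Gamma \subset X$: in the $(1,2)$-surface-fibration setting, $\Gamma$ meets each general fiber $F$ of $f$ in the unique simple base point of $|K_F|$, so one blow-up of $\Gamma$ already makes $|M|$ base-point-free on $X'$. With this minimal choice, $E_\pi = Z = E_0$, identity $(\ast)$ collapses to $3\pi^*K_X - (a-2)F' = K_{X'/\FF_a} + E_0$, and the inequality follows directly. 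The hardest part of the argument will be to verify rigorously that this reduction can be made without losing the other assumed properties of $X'$, and to handle any residual $E_\pi'$-contribution uniformly through the relation $2Z' \ge E_\pi'$ that one expects from the discrepancy structure above $\Gamma$.
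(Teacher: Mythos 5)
Your overall strategy (rewrite $3\pi^*K_X-(a-2)F'$ in terms of $K_{X'/\FF_a}+E_0$, invoke Lemma \ref{lem: construction of effective divisor (1,2) surface}, intersect with $D_0\cdot L$ and let $m\to\infty$) is the same as the paper's, and your identity $(\ast)$ is correct. But there are two genuine gaps, each at exactly the point where the paper's proof does its real work.

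First, the correction term $(E_\pi'\cdot D_0\cdot L)$ sits on the wrong side of the inequality, and your proposed ways of removing it do not work. The modification $\pi$ must resolve the terminal singularities of $X$ and the entire base locus of $|K_X|$ (which can have vertical components over special fibers), so there is no reason a single blow-up of $\Gamma$ suffices, nor that $E_\pi'=Z'=0$, nor that $2Z'\ge E_\pi'$ componentwise: $E_\pi$ records discrepancies of all exceptional divisors while $Z'$ records multiplicities of the fixed part, and these are not comparable in general. The paper's resolution is different: it writes $E_\pi+E_0-2Z=N_+-N_-$ with $N_\pm$ effective without common components, moves $N_-$ (and $t_2Z$) to the effective side, and then \emph{subtracts} $cmN_+$, justifying that the result stays effective because $N_+$ is $\pi$-exceptional and hence lies in the fixed part of $|H_m+cmN_-+ct_2Z|$, whose mobile part is pulled back from $X$. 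This step is what eliminates the unwanted positive correction, and it is absent from your argument.

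Second, you need $\tfrac{1}{cm}\coeff_{D_0}(J_m)\to 0$ so that the possibly negative self-intersection contribution $\coeff_{D_0}(J_m)(D_0^2\cdot L)$ disappears in the limit, and ``a suitable choice of the ample divisor $A$'' cannot deliver this: the components of the effective divisor $H_m$ produced by Lemma \ref{lem: construction of effective divisor (1,2) surface} are not controlled by $A$, and nothing prevents $D_0$ from appearing in $H_m$ with large coefficient. The paper proves the decay $\mu_m=\coeff_{D_0}(G_m)\le -\tfrac{t_2}{m}(E_0^2\cdot F')\to 0$ by computing $(G_m\cdot E_0\cdot F')=0$ (since $E_0$ is $\pi$-exceptional and $G_m$, $F'$ are pullbacks from $X$ up to $\QQ$-linear equivalence) and using $\coeff_{E_0}(G_m)=\tfrac{t_2}{m}$ together with $(D_0\cdot E_0\cdot F')=1$ from Lemma \ref{lem: D0}. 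This intersection-theoretic bound is the technical heart of the lemma and is missing from your proposal.
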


\begin{proof}
	The proof is just a slight modification of that of \cite[Claim 4.10]{Chen_Chen_Jiang}. By \eqref{eq: modification}, \eqref{eq: exceptional divisor} and Lemma \ref{lem: psi_0}, we have
	\begin{align}
		K_{X'/{\FF_a}} + E_0 & = (\pi^*K_X + E_\pi) + \psi_0^*\left(2 \bm{s} + (a+2) \bm{l} \right) + E_0 \nonumber \\
		& =  \pi^*K_X + 2M - (a - 2)\psi_0^*\bm{l} + E_\pi + E_0  \label{eq: K+E_0} \\
		& = 3\pi^*K_X - (a - 2)\psi_0^*\bm{l} + E_\pi + E_0 - 2Z. \nonumber
	\end{align}
	Write $E_\pi+E_0-2Z=N_{+}-N_{-}$, where $N_{+}$ and $N_{-}$ are both effective $\mathbb{Q}$-divisors with no common irreducible components. By Lemma \ref{lem: E0} (1), we deduce that $E_0 \nsubseteq \supp(N_{+})$ and $E_0 \nsubseteq \supp(N_{-})$.
	
	Choose an ample divisor $A = at_1\bm{l} + t_2 \bm{s}$ on $\FF_a$, where $t_1 > t_2$ are two positive integers such that $t_2 K_X$ is Cartier. Let $m$ be a positive integer such that $mK_X$ is Cartier. By Lemma \ref{lem: construction of effective divisor (1,2) surface}, there exists an integer $c>0$ and an effective divisor $H_m \sim cm (K_{X'/{\FF_a}}+E_0) + c \psi_0^*A$ such that $E_0\nsubseteq \mathrm{Supp}(H_m)$. Thus we have
	\begin{align*}
		& \hspace{16 pt} H_m+cmN_{-}+ct_2Z \\
		& \sim cm (K_{X'/{\FF_a}}+E_0) + c\psi_0^*A + cmN_{-} + ct_2Z\\
		& = cm \left(3 \pi^*K_X - (a - 2)\psi_0^*\bm{l} + E_\pi + E_0 -2Z + N_{-} \right) + c\psi_0^*A +ct_2Z \\
		& = cm\left(3\pi^*K_X - (a - 2)\psi_0^*\bm{l} \right)  + cm N_+ + c \left( at_1 \psi_0^* \bm{l} + t_2(\psi_0^* \bm{s} + Z) \right) \\
		& = cm \left(3\pi^*K_X - (a - 2)\psi_0^*\bm{l} \right) + cmN_{+}  + c \left(a(t_1-t_2) \psi_0^*\bm{l} + t_2(M+Z) \right) \\
		& = cm \left(3\pi^*K_X - (a - 2)\psi_0^*\bm{l}\right) + cmN_{+} + c\left(a (t_1 - t_2)\psi_0^*\bm{l} + t_2 \pi^*K_X \right).
	\end{align*}
	Here the first equality is by \eqref{eq: K+E_0}, and the last two equalities are by \eqref{eq: M} and \eqref{eq: modification}, respectively. By Lemma \ref{lem: psi_0}, $\phi_0^*\bm{l} = F'$. This implies
	\begin{align*}
		& \hspace{16 pt} H_m+cmN_{-}+ct_2Z - cmN_+ \\
		& \sim cm \pi^*\left(3K_X - (a - 2)F \right) + c \pi^*\left(a(t_1-t_2)F + t_2 K_X\right).
	\end{align*}
	Note that $N_+$ is $\pi$-exceptional. We deduce that $cmN_{+}$ is contained in the fixed part of $|H_m+cmN_{-}+ct_2Z|$. In particular,
	$H_m+cmN_{-}+ct_2Z-cmN_{+}$ is effective.
	
	Let $G_m = \frac{1}{cm}(H_m+cmN_{-}+ct_2Z-cmN_{+})$. Since $E_0 \nsubseteq \mathrm{Supp}(H_m) \cup \mathrm{Supp}(N_{+}) \cup \mathrm{Supp}(N_{-})$, by Lemma \ref{lem: E0} (1), $\coeff_{E_0} (G_m) = \frac{t_2}{m} \coeff_{E_0} (Z) = \frac{t_2}{m}$. By Lemma \ref{lem: D0} (2),
	$$
	\left( \left(G_m-\frac{t_2}{m}E_0 \right)\cdot E_0 \cdot F'\right) \ge \mu_m (D_0 \cdot E_0\cdot F') \ge \mu_m,
	$$
	where $\mu_m = \coeff_{D_0}(G_m)$. Since $E_0$ is $\pi$-exceptional and both $G_m$ and $F'$ are $\pi$-trivial, we have $(G_m \cdot E_0 \cdot F')=0$. It follows that
	$$
	-\frac{t_2}{m}(E_0^2 \cdot F')\ge \mu_m \ge 0.
	$$
	In particular, $\lim\limits_{m \to \infty} \mu_m=0$. Thus for any nef $\QQ$-divisor $L$ on $X'$, we have
	$$
	\lim\limits_{m \to \infty} (G_m\cdot D_0\cdot L) = \lim\limits_{m \to \infty} \left((G_m-\mu_m D_0) \cdot D_0\cdot L\right) \ge 0.
	$$
	By the definition of $G_m$, the above inequality just implies that
	\begin{align*}
		\left((3\pi^*K_X - (a-2) F') \cdot D_0 \cdot L\right) = \lim\limits_{m \to \infty} (G_m\cdot D_0\cdot L) \ge 0.
	\end{align*}	
	The proof is completed.
\end{proof}

\begin{prop} \label{prop: Noether pencil}
	Under the above assumption, if $K_X - bF$ is nef, then we have
	$$
	K_X^3 \ge \frac{4}{3}a  + b - \frac{2}{3}.
	$$
\end{prop}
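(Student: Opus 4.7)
The plan is to combine the ``weak pseudo-effectivity'' supplied by Lemma~\ref{lem: weak pseudo-effective} with the nefness hypothesis on $K_X-bF$, together with the decomposition of $\pi^*K_X$ coming from Lemmas~\ref{lem: psi_0} and \ref{lem: D0}. The whole argument should reduce to one application of Lemma~\ref{lem: weak pseudo-effective} followed by bookkeeping of triple intersection numbers on $X'$.

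First I would choose the test divisor $L:=\pi^*(K_X-bF)=\pi^*K_X-bF'$, which is nef because it is the pullback of a nef divisor under the morphism $\pi$ (here $\psi_0^*\bm{l}=F'$ by Lemma~\ref{lem: psi_0}, so $\pi^*F=F'$ on the level of general fibers). Plugging this $L$ into Lemma~\ref{lem: weak pseudo-effective}, expanding the triple intersection, and invoking $(\pi^*K_X\cdot D_0\cdot F')=1$ from Lemma~\ref{lem: D0}(2) together with $((F')^2\cdot D_0)=0$ (since $F'$ is a fiber of $f'$), I expect to obtain
$$3\bigl((\pi^*K_X)^2\cdot D_0\bigr)-3b-(a-2)\ge 0,\qquad\text{i.e.},\qquad\bigl((\pi^*K_X)^2\cdot D_0\bigr)\ge b+\frac{a-2}{3}.$$

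Next I would decompose $\pi^*K_X$ into manageable pieces. By Lemma~\ref{lem: psi_0}, $|M|=\psi_0^*|\bm{s}+a\bm{l}|$ and $\psi_0^*\bm{l}=F'$, so $M\sim\psi_0^*\bm{s}+aF'$, and consequently $\pi^*K_X\sim_{\QQ}\psi_0^*\bm{s}+aF'+Z$. Lemma~\ref{lem: D0}(1) lets me write $\psi_0^*\bm{s}=D_0+N$ with $N$ effective. Thus
\begin{align*}
K_X^3=a\bigl((\pi^*K_X)^2\cdot F'\bigr)+\bigl((\pi^*K_X)^2\cdot D_0\bigr)+\bigl((\pi^*K_X)^2\cdot(N+Z)\bigr).
\end{align*}
Using $(\pi^*K_X)|_{F'}=(\pi|_{F'})^*K_F$ and $K_F^2=1$ (as $F$ is a $(1,2)$-surface) the first term is $a$; the last term is nonnegative because $\pi^*K_X$ is nef and $N+Z$ is effective. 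Combining with the previous lower bound gives
$$K_X^3\ge a+\Bigl(b+\frac{a-2}{3}\Bigr)=\frac{4}{3}a+b-\frac{2}{3},$$
which is the desired inequality.

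The argument is quite short once the earlier machinery is assembled, so there is no serious obstacle, only careful verification. The mildly delicate points will be: (i) confirming that $\pi^*F=F'$ suffices to make $L$ nef (so that Lemma~\ref{lem: weak pseudo-effective} applies with the correct fiber class), and (ii) checking that the triple intersections $(\pi^*K_X\cdot D_0\cdot F')$ and $((F')^2\cdot D_0)$ behave as claimed under the decomposition $\psi_0^*\bm{s}=D_0+N$. Both are immediate from Lemmas~\ref{lem: psi_0} and \ref{lem: D0}.
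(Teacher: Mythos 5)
Your argument is correct and follows essentially the same route as the paper's proof: both apply Lemma~\ref{lem: weak pseudo-effective} with the nef test divisor $L=\pi^*K_X-bF'$ to obtain $\bigl((\pi^*K_X)^2\cdot D_0\bigr)\ge\frac{a+3b-2}{3}$, and both use the decomposition $\pi^*K_X\equiv\psi_0^*\bm{s}+aF'+Z$ together with $\coeff_{D_0}(\psi_0^*\bm{s})=1$ and $\bigl((\pi^*K_X)^2\cdot F'\bigr)=K_F^2=1$ to assemble the bound $K_X^3\ge a+\bigl((\pi^*K_X)^2\cdot D_0\bigr)$. The only difference is cosmetic (you make the discarded effective pieces $N+Z$ explicit where the paper absorbs them into two inequalities).
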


\begin{proof}
	By \eqref{eq: modification} and \eqref{eq: M}, $\pi^*K_X \equiv aF' + \psi_0^*\bm{s} + Z$. Thus
	$$
	K_X^3 \ge a \left((\pi^*K_X)^2 \cdot F'\right) + \left((\pi^*K_X)^2 \cdot (\psi_0^*\bm{s})\right) \ge a +
	\left((\pi^*K_X)^2 \cdot D_0 \right),
    $$
	where $D_0$ is the unique divisor as in Lemma \ref{lem: D0}. By Lemma \ref{lem: D0} (2) and Lemma \ref{lem: weak pseudo-effective}, we have
	\begin{align*}
		0 & \le \left((3\pi^*K_X - (a-2) F')\cdot D_0 \cdot (\pi^*K_X - bF') \right) \\
		& = 3 \left((\pi^*K_X)^2 \cdot D_0\right) - (a + 3b - 2) \left((\pi^*K_X) \cdot D_0 \cdot F'\right) \\
		& = 3\left((\pi^*K_X)^2 \cdot D_0\right) - (a + 3b - 2).
	\end{align*}
	That is, $((\pi^*K_X)^2 \cdot D_0) \ge \frac{a + 3b - 2}{3}$. Thus it follows that
	$$
	K_X^3 \ge a + \frac{a + 3b - 2}{3} = \frac{4}{3}a + b - \frac{2}{3}.
	$$
	The proof is completed.
\end{proof}

\begin{lemma} \label{lem: 2K-F}
	If $a \ge 21$, then $f_* \CO_X(2K_X)$ is an ample vector bundle over $\PP^1$ of rank $4$. In particular, $f_* \CO_X(2K_X) \otimes \CO_{\PP^1}(-1)$ is nef.
\end{lemma}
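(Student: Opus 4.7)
The plan is to treat the rank and the ampleness separately. For the rank, a general fiber $F$ is a smooth minimal $(1,2)$-surface, so
\[
\chi(\CO_F) = 1 + p_g(F) - q(F) = 3,
\]
and Kodaira vanishing on $F$ gives $h^i(F, 2K_F) = 0$ for $i \ge 1$; Riemann--Roch then yields $h^0(F, 2K_F) = \chi(\CO_F) + K_F^2 = 4$. Since $\PP^1$ is a smooth curve and the torsion-free sheaf $f_*\CO_X(2K_X)$ has generic rank four by cohomology and base change over the open locus where $f$ has smooth fibers, it is locally free of rank $4$ on all of $\PP^1$.

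For the ampleness statement, the ``in particular'' is in fact equivalent: on $\PP^1$ a vector bundle $\CE = \bigoplus_{i=1}^4 \CO(b_i)$ is ample if and only if all $b_i \ge 1$, if and only if $\CE \otimes \CO_{\PP^1}(-1)$ is nef, if and only if $h^1(\PP^1, \CE \otimes \CO_{\PP^1}(-2)) = 0$. By the projection formula,
\[
\CE \otimes \CO_{\PP^1}(-2) = f_*\CO_X(2K_X - 2F),
\]
and the Leray spectral sequence provides the injection $H^1(\PP^1, f_*\CO_X(2K_X - 2F)) \hookrightarrow H^1(X, \CO_X(2K_X - 2F))$. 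Hence it suffices to prove $h^1(X, 2K_X - 2F) = 0$.

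My approach is to apply Kawamata--Viehweg vanishing to the $\QQ$-divisor $K_X - 2F$, writing $2K_X - 2F = K_X + (K_X - 2F)$. Bigness is immediate: using $F^2 \equiv 0$, $F^2 \cdot K_X = 0$ and $K_X^2 \cdot F = K_F^2 = 1$, one computes $(K_X - 2F)^3 = K_X^3 - 6$, which is positive by the Noether inequality $K_X^3 \ge \tfrac{4}{3}p_g(X) - \tfrac{10}{3}$ and the hypothesis $a = p_g(X) - 2 \ge 21$ (giving $K_X^3 \ge 82/3$). Nefness of $K_X - 2F$ is automatic on vertical curves (as $K_X$ is nef and $F$ is numerically trivial on fibers), so the burden is entirely on horizontal curves. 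Here I would combine the estimate $(K_X \cdot \Gamma) \ge \tfrac{1}{2}\lceil 2(a-2)/3 \rceil \ge 13/2$ from Proposition~\ref{prop: Noether (1,2)-surface}(1) for the canonical section $\Gamma$ with the cone/pencil structure of \S\ref{subsection: pencil}, Lemma~\ref{lem: weak pseudo-effective}, and Proposition~\ref{prop: Noether pencil}, to verify that every irreducible horizontal curve $C \subset X$ satisfies $(K_X \cdot C) \ge 2(F \cdot C)$.

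The main obstacle is this last nefness step: ruling out horizontal curves of small $K_X$-degree relative to the fiber is what dictates the threshold $a \ge 21$. I expect the argument to run parallel to the proof of Lemma~\ref{lem: weak pseudo-effective}, but with an additional twist by a multiple of $F$, using the specific splitting $f_*\omega_X = \CO_{\PP^1}(a) \oplus \CO_{\PP^1}$ and the explicit structure of the morphism $\psi_0 \colon X' \to \FF_a$. Once nefness is established, Kawamata--Viehweg closes the argument routinely.
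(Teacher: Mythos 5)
Your reduction of the ampleness to the vanishing $h^1(\PP^1, f_*\CO_X(2K_X)\otimes\CO_{\PP^1}(-2)) = 0$ is fine, as is the rank computation, but the crux of your argument --- the nefness of $K_X - 2F$ --- is left unproved, and this is a genuine gap rather than a routine verification. The statement $(K_X\cdot C)\ge 2(F\cdot C)$ for every horizontal curve $C$ is far stronger than anything available at this point in the paper: Proposition \ref{prop: 2K-F} only establishes that $2K_X - F$ is nef, i.e.\ $(K_X\cdot C)\ge\frac{1}{2}(F\cdot C)$, and it does so by \emph{using} Lemma \ref{lem: 2K-F}, so leaning on that circle of ideas risks circularity. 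Moreover, the natural mechanism for producing such nefness (positivity of a twisted direct image) fails outright for $K_X - 2F$, since in the situation of \S\ref{subsection: pencil} one has $f_*\omega_X = \CO_{\PP^1}(a)\oplus\CO_{\PP^1}$ and the degree-zero summand leaves no room to subtract $2F$. You have also misdiagnosed the source of the threshold $a\ge 21$: it does not come from excluding horizontal curves of small degree.

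The paper's proof avoids any auxiliary nefness entirely. It uses Koll\'ar's bound $\mathrm{glct}(F)\ge\frac{1}{10}$ for $(1,2)$-surfaces: writing $(\psi_0^*\bm{s}+Z)|_{F'} = \mu^* B$ with $B\sim_{\QQ}K_F$, the condition $a\ge 21$ makes the pair $\bigl(F,\frac{2}{a}B\bigr)$ klt, so that $f'_*\CO_{X'}\bigl(K_{X'}+\roundup{\pi^*K_X-\frac{2}{a}(\psi_0^*\bm{s}+Z)}\bigr)$ still has full rank $4$. Kawamata--Viehweg vanishing is then applied on the smooth model $X'$ to the divisor $D = \pi^*K_X-\frac{2}{a}(\psi_0^*\bm{s}+Z)-F'_1-F'_2\sim_{\QQ}(1-\frac{2}{a})\pi^*K_X$, which is nef and big for free; the resulting $h^1$-vanishing shows this rank-$4$ subsheaf of $f_*\CO_X(2K_X)$ is ample, and an ample subsheaf of the same rank forces ampleness of $f_*\CO_X(2K_X)$ itself. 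If you want to salvage your outline, you would need to either prove the nefness of $K_X-2F$ from scratch (which I do not believe the paper's techniques yield) or replace that step with a klt-pair argument of this kind.
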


\begin{proof}
	Since $P_2(F) =  4$, we deduce that $f_* \CO_X(2K_X)$ is locally free of rank $4$. Taking a birational modification of $X'$, we may assume that $\psi_0^*\bm{s} + Z+E_{\pi}$ is a simple normal crossing $\QQ$-divisor. Let $\mu:= \pi|_{F'}: F' \to F$. By \eqref{eq: modification} and Lemma \ref{lem: psi_0}, we have
	$$
	\mu^*K_F \sim (\pi^*K_X)|_{F'} = M|_{F'} + Z|_{F'} = (\psi_0^*\bm{s} + Z)|_{F'}.
	$$
	Thus we may write $(\psi_0^*\bm{s} + Z)|_{F'} = \mu^*B$ for a divisor $B \sim_{\QQ} K_F$. By a result of J. Koll\'ar \cite[Theorem A.1]{Chen_Chen_Jiang}, $\mathrm{glct}(F) \ge \frac{1}{10}$. Thus the pair $(F, \frac{2}{a}B)$ is klt, and $\mu_*\mathcal{O}_{F'}(K_{F'/F} - \rounddown{\frac{2}{a} \mu^*B}) = \mathcal{O}_F$. It follows that
	\begin{align*}
		h^0 \left(F', K_{F'} + \roundup{\mu^*K_F - \frac{2}{a} \mu^*B}\right) &
		= h^0 \left(F', 2\mu^*K_F + K_{F'/F} - \rounddown{\frac{2}{a} \mu^*B} \right) \\
		& = h^0(F, 2K_F) = 4.
	\end{align*}
    Therefore, $f'_* \CO_{X'} \left(K_{X'} + \roundup{\pi^*K_X - \frac{2}{a} (\psi_0^*\bm{s} + Z)}\right)$ is a vector bundle of rank $4$ over $\PP^1$. Let $F'_1$ and $F'_2$ be two different smooth fibers of $f'$. Consider the divisor
	$$
	D : = \pi^*K_X - \frac{2}{a} (\psi_0^*\bm{s} + Z) - F'_1 - F'_2.
	$$
	Now the fractional part of $D$ is simple normal crossing. By Lemma \ref{lem: psi_0}, $D \sim_{\QQ} (1 - \frac{2}{a}) \pi^*K_X$ is nef and big. Thus by the Kawamata-Viehweg vanishing theorem, we have
	$$
	h^1(X', K_{X'} + \roundup{D}) = 0.
	$$
	By the Leray spectral sequence, the above vanishing implies that
	$$
	h^1\left(\PP^1, f'_* \CO_{X'} \left(K_{X'} + \roundup{\pi^*K_X - \frac{2}{a} (\psi_0^*\bm{s} + Z)} \right) \otimes \CO_{\PP_1}(-2) \right) = 0.
	$$
	We conclude that $f'_* \CO_{X'} \left(K_{X'} + \roundup{(\pi^*K_X - \frac{2}{a} (\psi_0^*\bm{s} + Z)} \right)$ is ample. Since $f'_*$ is left exact, we obtain an inclusion
	$$
	f'_* \CO_{X'} \left(K_{X'} + \roundup{\pi^*K_X - \frac{2}{a} (\psi_0^*\bm{s} + Z)} \right) \hookrightarrow f'_* \omega_{X'}^{\otimes 2} = f_* \CO_X(2K_X)
	$$
	between two vector bundles over $\PP^1$ of the same rank. Therefore, $f_* \CO_X(2K_X)$ is also ample. Thus $f_* \CO_X(2K_X)\otimes \CO_{\PP^1}(-1)$ is nef.
\end{proof}

\begin{prop} \label{prop: 2K-F}
	If $p_g(X) \ge 23$, then $2K_X - F$ is nef. In particular,
	$$
	K_X^3 \ge \frac{4}{3} p_g(X) - \frac{17}{6}.
	$$
\end{prop}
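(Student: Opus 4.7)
The plan is to first establish that $2K_X - F$ is nef on $X$, and then conclude via Proposition \ref{prop: Noether pencil} applied with $b = \tfrac{1}{2}$. Granting this nefness (equivalently, that $K_X - \tfrac{1}{2}F$ is a nef $\QQ$-divisor), the proposition immediately yields
$$K_X^3 \ge \frac{4}{3}a + \frac{1}{2} - \frac{2}{3} = \frac{4}{3}(p_g(X) - 2) - \frac{1}{6} = \frac{4}{3}p_g(X) - \frac{17}{6},$$
which is precisely the desired bound. So the real work is the nefness statement.

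To prove nefness, I will exploit Lemma \ref{lem: 2K-F}: the sheaf $\mathcal{E} := f_*\CO_X(2K_X - F) = f_*\CO_X(2K_X) \otimes \CO_{\PP^1}(-1)$ is a nef vector bundle of rank $P_2(F) = 4$ on $\PP^1$, and so is globally generated. Since $H^1(F_t, 2K_{F_t}) = 0$ by Kodaira vanishing on a general $(1,2)$-fiber $F_t$, cohomology and base change apply, so the restriction $H^0(X, 2K_X - F) \to H^0(F_t, 2K_{F_t})$ is surjective for general $t$, and hence $\baselocus|2K_X - F| \cap F_t \subseteq \baselocus|2K_{F_t}|$. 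The crucial geometric input is that, on a $(1,2)$-surface $F$, the base locus of $|2K_F|$ is contained in the unique base point $p = \Gamma \cap F$ of $|K_F|$, because the subsystem $\Sym^2 H^0(F, K_F) \subset H^0(F, 2K_F)$ already cuts out exactly $\{p\}$. Consequently, the only irreducible curve on $X$ dominating $\PP^1$ that can lie in $\baselocus|2K_X - F|$ is the canonical section $\Gamma$ itself.

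With this in hand, the verification of $(2K_X - F) \cdot C \ge 0$ for every irreducible curve $C \subset X$ splits into three cases. If $C$ is contained in a fiber of $f$, then $F \cdot C = 0$ and $(2K_X - F) \cdot C = 2(K_X \cdot C) \ge 0$ by nefness of $K_X$. If $C$ dominates $\PP^1$ and $C \neq \Gamma$, then $C \not\subseteq \baselocus|2K_X - F|$, so some $D \in |2K_X - F|$ does not contain $C$ and $(2K_X - F) \cdot C = D \cdot C \ge 0$. Finally, for $C = \Gamma$, which is a section of $f$, Proposition \ref{prop: Noether (1,2)-surface} (1) applied with $d = a = p_g(X) - 2 \ge 21$ yields
$$(2K_X - F) \cdot \Gamma = 2(K_X \cdot \Gamma) - 1 \ge \roundup{\tfrac{2(p_g(X) - 4)}{3}} - 1 > 0.$$

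I expect the main obstacle to be the horizontal base-locus analysis in the middle case: although short, it hinges on the interplay of three ingredients --- the global generation of $\mathcal{E}$ on $\PP^1$, the base change isomorphism $\mathcal{E} \otimes k(t) \cong H^0(F_t, 2K_{F_t})$ on a general fiber, and the surface-theoretic fact that $\baselocus|2K_F| \subseteq \{p\}$ for a $(1,2)$-surface. Cleanly combining them is what promotes nefness of $\mathcal{E}$ on $\PP^1$ into genuine nefness of $2K_X - F$ on the threefold $X$.
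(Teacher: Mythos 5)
Your proof is correct, and it rests on the same two pillars as the paper's own argument: Lemma \ref{lem: 2K-F} (nefness of $f_*\CO_X(2K_X)\otimes\CO_{\PP^1}(-1)$) together with the behaviour of $|2K_F|$ on a general $(1,2)$-fiber, followed by the same vertical/horizontal case split and the same application of Proposition \ref{prop: Noether pencil} with $b=\tfrac12$. The packaging of the horizontal case is where you genuinely diverge. The paper quotes Horikawa's result that $|2K_F|$ is base point free, so the relative bicanonical map is a morphism away from finitely many fibers, the error divisor $E_1$ in $\phi_1^*H\sim 2\pi_1^*K_X-E_1$ is purely vertical, and the horizontal curves are handled in one line with no special role for $\Gamma$. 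You use only the weaker, self-contained fact that $\baselocus|2K_F|\subseteq\{p\}$ (via $\Sym^2H^0(F,K_F)$), which leaves $\Gamma$ as the one possible horizontal component of $\baselocus|2K_X-F|$ and forces a separate check that $(2K_X-F)\cdot\Gamma\ge 0$; your appeal to Proposition \ref{prop: Noether (1,2)-surface} (1) with $d=p_g(X)-2\ge 21$ settles this, since $2(K_X\cdot\Gamma)-1\ge\roundup{2(p_g(X)-4)/3}-1>0$. So your route trades Horikawa's base-point-freeness theorem for one extra numerical estimate already available in the paper; both are valid, and the global-generation/base-change mechanism you use is an equally clean way to promote nefness of the pushforward on $\PP^1$ to nefness of $2K_X-F$ on $X$. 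One cosmetic point: $H^1(F,2K_F)=0$ follows from Mumford or Kawamata--Viehweg vanishing ($K_F$ is nef and big but need not be ample on a minimal model), not Kodaira vanishing as literally stated; the conclusion is unaffected.
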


\begin{proof}
	Let $\phi: X \dashrightarrow \PP(f_* \CO_X(2K_X))$ be the relative bicanonical map of $X$ with respect to $f$. Then $\phi$ is induced by the linear system $|2K_X + tF|$ for a sufficiently large integer $t$. Since $F$ is a minimal $(1, 2)$-surface, $|2K_F|$ is base point free \cite[Lemma 2.1]{Horikawa2}. We deduce that the indeterminacy locus of $\phi$ is vertical with respect to $f$. Let $\pi_1: X_1 \to X$ be the blow-up of the indeterminacy locus of $\phi$, and let $\phi_1: X_1 \to \PP(f_* \CO_X(2K_X))$ be the induced morphism by $\pi_1$. Then we have $\phi_1^* H \sim 2 \pi_1^*K_X - E_1$, where $H$ is a relative hyperplane section of $\PP(f_* \CO_X(2K_X))$, and $E_1 \ge 0$ is a vertical $\QQ$-divisor with respect to the fibration $f_1: X_1 \to \PP^1$. Since $a = p_g(X) - 2 \ge 21$, by Lemma \ref{lem: 2K-F}, $f_*\CO_X(2K_X) \otimes \CO_{\PP^1}(-1)$ is nef. Thus the divisor $\pi_1^*(2K_X - F) - E_1$ is also nef.
	
	Suppose that $A$ is an integral curve on $X_1$. If $A$ is horizontal with respect to $f_1$, then
	$$
	\left( \left(\pi_1^*(2K_X - F) \right) \cdot A \right) \ge (E_1 \cdot A) \ge 0.
	$$
	If $A$ is vertical with respect to $f_1$, then
	$$
	\left( \left(\pi_1^*(2K_X - F) \right) \cdot A \right) = 2 \left( (\pi_1^*K_X) \cdot A \right) \ge 0.
	$$
	As a result, $2K_X - F$ is nef. By Proposition \ref{prop: Noether pencil}, we deduce that
	$$
	K_X^3 \ge \frac{4}{3} p_g(X) - \frac{17}{6}.
	$$
	The whole proof is completed.
\end{proof}


\section{Noether inequalities for $3$-folds of general type}

In this section, we establish three Noether inequalities for $3$-folds of general type. Within this section, let $X$ be a minimal $3$-fold of general type with $p_g(X) \ge 3$. Let
$$
\phi_{K_X}: X \dashrightarrow \Sigma \subseteq \PP^{p_g(X) - 1}
$$
be the canonical map of $X$ with the image $\Sigma$.

\subsection{The case when $\dim \Sigma = 3$} We start from the easiest case.

\begin{prop} \label{prop: Noether dim 3}
	Suppose that $\dim \Sigma = 3$. Then
	$$
	K_X^3 \ge 2p_g(X) - 6.
	$$
\end{prop}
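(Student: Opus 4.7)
The plan is to exploit the birational modification $\pi \colon X' \to X$ of \S\ref{modification}, together with the decomposition $\pi^*K_X = M + Z$ in which $|M| = \movable|\rounddown{\pi^*K_X}|$ is base point free and $Z \ge 0$. Since $\dim \Sigma = 3$, the induced morphism $\phi_M \colon X' \to \Sigma \subseteq \PP^{p_g(X)-1}$ is generically finite, and $\Sigma$ is a non-degenerate $3$-fold, so by the classical bound on the degree of a non-degenerate variety one already has $\deg \Sigma \ge p_g(X) - 3$. The target is to upgrade this to a factor $2$ via a hyperplane slicing argument.

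First I would take a general member $S \in |M|$, smooth by Bertini's theorem. From the short exact sequence
$$
0 \to \CO_{X'} \to \CO_{X'}(M) \to \CO_S(M|_S) \to 0
$$
I would read off $h^0(S, M|_S) \ge h^0(X', M) - 1 = p_g(X) - 1$. The restriction $\phi_M|_S$ sends $S$ onto a hyperplane section of the $3$-fold $\Sigma$, which is a surface; since the morphism defined by the complete linear system $|M|_S|$ has image containing that of $\phi_M|_S$, it too has $2$-dimensional image. Ohno's Clifford-type surface bound \cite[Lemma 1.8]{Ohno}, already invoked in the proof of Proposition \ref{prop: upper bound of P2}, then yields
$$
(M|_S)^2 \ge 2 h^0(S, M|_S) - 4 \ge 2 p_g(X) - 6.
$$

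Finally, using that $\pi^*K_X$ is nef and that $Z \ge 0$, the chain of inequalities
$$
K_X^3 = (\pi^*K_X)^3 \ge (\pi^*K_X)^2 \cdot M \ge (\pi^*K_X) \cdot M^2 \ge M^3 = (M|_S)^2
$$
closes the argument. Compared with the far more involved cases $\dim \Sigma \le 2$ handled in Sections \ref{section: fibered model} and \ref{section: geometry}, this generically-finite case is essentially immediate; there is no serious obstacle, and the only mildly delicate point is verifying that the complete linear system $|M|_S|$ inherits a $2$-dimensional image, which the hyperplane-section observation above settles at once.
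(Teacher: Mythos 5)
Your argument is correct in substance, but it is worth pointing out that the paper does not prove this statement at all: its entire proof is the citation ``This is just \cite[Theorem 2.4]{Kobayashi}.'' What you have written out is, in effect, a reconstruction of the standard Noether-type slicing argument behind Kobayashi's theorem: pass to the base-point-free movable part $M$ of $|\rounddown{\pi^*K_X}|$, restrict to a general smooth $S \in |M|$ to get $h^0(S, M|_S) \ge p_g(X) - 1$ with a two-dimensional image, apply a Clifford-type bound to get $(M|_S)^2 \ge 2p_g(X) - 6$, and then climb back up via nefness of $\pi^*K_X$ and $M$ together with $Z \ge 0$. All of these steps check out, and the final chain $K_X^3 \ge M^3 = (M|_S)^2$ is fine.

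The one point you should not gloss over is the invocation of \cite[Lemma 1.8]{Ohno}. The bare implication ``$|N|$ base point free with two-dimensional image $\Rightarrow N^2 \ge 2h^0(S,N) - 4$'' is false as stated: $|\CO_{\PP^2}(1)|$ gives $N^2 = 1 < 2 = 2h^0 - 4$, and more generally any surface mapped birationally onto a surface of minimal degree violates it. The lemma carries a hypothesis (in the form used by Chen--Chen--Jiang, roughly that a divisor dominating $N$ from above is controlled by $K_S$, so that the restriction of $N$ to a general curve in $|N|$ is a \emph{special} divisor and Clifford's theorem applies; this is also why the paper only ever applies the lemma to subsystems of $|2K_{X'}|$ restricted to $S$). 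In your situation the hypothesis does hold, since
$$
K_S = \left(K_{X'} + S\right)\big|_S = \left(\pi^*K_X + E_\pi + M\right)\big|_S \ge 2\,M|_S,
$$
so $K_S - M|_S$ is effective and the Clifford step goes through; but you should say this explicitly rather than quoting the lemma as an unconditional statement about base-point-free systems with generically finite associated maps.
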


\begin{proof}
	This is just \cite[Theorem 2.4]{Kobayashi}.
\end{proof}

\subsection{The case when $\dim \Sigma = 2$} Given a $3$-fold $X$ with at worst canonical singularities, there is an associated basket $B_X$ according to Reid \cite{Reid}. By \cite{Fletcher}, $B_X$ is uniquely determined by $X$. Recall  the Riemann-Roch formula in \cite[Corollary 10.3]{Reid} for $P_2(X)$:
\begin{equation} \label{eq: Riemann-Roch P2}
	P_2(X) = \frac{1}{2}K_X^3+3\chi(\omega_X)+l_2(X).
\end{equation}
Here the correction term
\begin{equation} \label{eq: l2}
	l_2(X) = \sum_{Q} \frac{b_Q(r_Q - b_Q)}{2r_Q},
\end{equation}
where the sum $\sum_{Q}$ runs over all singularities $Q\in B_X$ with the type $\frac{1}{r_Q}(1, -1, b_Q)$ ($b_Q$ and $r_Q$ are coprime, and $0 < b_Q \le \frac{1}{2}r_Q$). In particular, $l_2(X) \ge 0$. Moreover, we have the following facts:
\begin{itemize}
	\item [(1)] $l_2(X) = 0$ if and only if $X$ is Gorenstein. Otherwise, $l_2(X) \ge \frac{1}{4}$.
	\item [(2)] $l_2(X) = \frac{1}{4}$ if and only if $X$ has only one non-Gorenstein terminal singularity, and it is of type $\frac{1}{2}(1, -1, 1)$.
	\item [(3)] $l_2(X) = \frac{1}{2}$ if and only if one of the following two cases occurs:
	\begin{itemize}
		\item $X$ has two non-Gorenstein terminal singularities, and they are of type $\frac{1}{2}(1, -1, 1)$;
		\item $X$ has only one non-Gorenstein terminal singularity, and it is of type $cA_1/\mu_2$.
	\end{itemize}
\end{itemize}
We refer the reader to \cite{Reid} for more details regarding the above formula.

\begin{prop} \label{prop: Noether1 dim 2}
	Suppose that $\dim \Sigma = 2$, $p_g(X) \ge 7$ and $p_g(X) \equiv 1$ $(\mathrm{mod}$ $3)$. Then
	\begin{equation} \label{eq: first Noether}
		K_X^3 \ge \frac{4}{3} p_g(X)-\frac{10}{3}.
	\end{equation}
	If the equality holds, then $X$ is Gorenstein, and it follows that $X$ is factorial. If the equality does not hold, then
	$$
	K_X^3 \ge \frac{4}{3} p_g(X) - \frac{17}{6}.
	$$
\end{prop}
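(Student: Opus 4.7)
The plan is to proceed by a clean dichotomy on how large $K_X^3$ is relative to $\frac{4}{3}p_g(X) - 2$. If $K_X^3 \ge \frac{4}{3}p_g(X) - 2$, then both desired inequalities follow trivially since $-2 > -\frac{17}{6} > -\frac{10}{3}$. Otherwise we may invoke Proposition \ref{prop: fibered minimal model}: combined with the hypotheses $\dim\Sigma = 2$ and $p_g(X)\ge 7$, it yields a minimal model $X_1$ birational to $X$ equipped with a fibration $f\colon X_1\to\PP^1$ whose general fiber is a $(1,2)$-surface, and with $d = p_g(X) - 2$ (by Remark \ref{rmk: d=pg-2}). Since birationally equivalent minimal $3$-folds share the same volume and plurigenera, I shall freely identify $K_X^3 = K_{X_1}^3$ and $P_2(X) = P_2(X_1)$ below, and apply the results of \S\ref{section: geometry} to $X_1$.

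The main inequality \eqref{eq: first Noether} follows immediately from Proposition \ref{prop: Noether (1,2)-surface}(2). Indeed, $p_g(X) \equiv 1 \pmod 3$ forces $d \equiv 2 \pmod 3$, so $d-2 \equiv 0 \pmod 3$ and the ceiling is exact:
\[
K_X^3 \;\ge\; d + \tfrac{1}{2}\cdot\tfrac{2(d-2)}{3} \;=\; \tfrac{4p_g(X)-10}{3}.
\]

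For the equality case, write $p_g(X) = 3m-2$, so the equality reads $K_X^3 = 4m-6$. The plan is to squeeze $l_2(X)$ from above using Proposition \ref{prop: upper bound of P2} and from below using Reid's formula \eqref{eq: Riemann-Roch P2}. Case (2) of Proposition \ref{prop: upper bound of P2} would give $K_X^3 \ge \frac{4}{3}p_g(X) - \frac{17}{6} > \frac{4}{3}p_g(X) - \frac{10}{3}$, contradicting equality, so case (1) applies. The two inner quantities $2K_X^3 = 8m-12$ and $2K_X^3 - \frac{5(p_g(X)-1)}{3} = 3m-7$ are integers, so the upper bound becomes $P_2(X) \le 11m-12$. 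On the other hand, Lemma \ref{lem: H1H2} applied to $X_1$ gives $\chi(\omega_X) = p_g(X)-1$, so Reid's Riemann-Roch yields $P_2(X) = \frac{1}{2}K_X^3 + 3(p_g(X)-1) + l_2(X) = 11m-12 + l_2(X)$. Comparing, $l_2(X) \le 0$, and since $l_2(X) \ge 0$, we conclude $l_2(X) = 0$, i.e.\ $X$ is Gorenstein. Factoriality then follows from the standard fact that a $\QQ$-factorial Gorenstein terminal $3$-fold is factorial (its singularities are isolated cDV with trivial local class group).

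For the strict-inequality case, suppose for contradiction that $\frac{4}{3}p_g(X) - \frac{10}{3} < K_X^3 < \frac{4}{3}p_g(X) - \frac{17}{6}$, i.e.\ $4m - 6 < K_X^3 < 4m - \tfrac{11}{2}$. Case (2) of Proposition \ref{prop: upper bound of P2} would contradict the upper bound, so case (1) applies; on this range, $2K_X^3 \in (8m-12,\,8m-11)$ and $2K_X^3 - \frac{5(p_g(X)-1)}{3} \in (3m-7,\,3m-6)$, so both floors agree with the equality case and again $P_2(X) \le 11m-12$. The same Riemann-Roch computation then gives $\tfrac{1}{2}K_X^3 \le 2m-3-l_2(X)$, hence $K_X^3 \le 4m-6$, contradicting $K_X^3 > 4m-6$.

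The main obstacle I anticipate is the arithmetic bookkeeping: verifying that the two floor expressions in Proposition \ref{prop: upper bound of P2} collapse to integers in precisely the range under consideration, so that the Riemann-Roch constraint annihilates the non-Gorenstein correction $l_2(X)$ (and simultaneously closes the forbidden gap $(4m-6,\,4m-11/2)$). The congruence $p_g(X) \equiv 1 \pmod 3$ is essential at two separate points—first to make the ceiling in Proposition \ref{prop: Noether (1,2)-surface} exact, and second to make both floors in Proposition \ref{prop: upper bound of P2} fall exactly on integers—so the argument is really about matching these three parity computations.
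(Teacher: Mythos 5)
Your proposal is correct and follows essentially the same route as the paper: reduce to the fibered model via Proposition \ref{prop: fibered minimal model}, get \eqref{eq: first Noether} from Proposition \ref{prop: Noether (1,2)-surface} with $d=p_g(X)-2$, and then pin down $l_2(X)$ by playing Proposition \ref{prop: upper bound of P2} against Reid's formula \eqref{eq: Riemann-Roch P2} in both the equality and the gap cases. The only (harmless) difference is that you make explicit two points the paper leaves implicit, namely that case (2) of Proposition \ref{prop: upper bound of P2} is incompatible with the assumed range of $K_X^3$, and that the relevant invariants are preserved when passing to the birational minimal model $X_1$.
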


\begin{proof}
	To prove the proposition, we may assume that
	$K_X^3 < \frac{4}{3}p_g(X)-2$. By Proposition \ref{prop: fibered minimal model}, we may further assume that $X$ admits a fibration $f: X \to \PP^1$ such that the general fiber $F$ is a minimal $(1,2)$-surface. Thus all results in \S \ref{section: geometry} apply here.
	
	Since $p_g(X) \equiv 1$ $(\mathrm{mod}$ $3)$, by Proposition \ref{prop: Noether (1,2)-surface} and Remark \ref{rmk: d=pg-2}, we deduce that
	$$
	K_X^3 \ge p_g(X) - 2 + \frac{1}{2} \roundup{\frac{2(p_g(X)-4)}{3}} = \frac{4}{3}p_g(X) - \frac{10}{3}.
	$$
	Thus the inequality \eqref{eq: first Noether} holds.
	
	Now suppose that $K_X^3 = \frac{4}{3} p_g(X) - \frac{10}{3}$. Since $p_g(X) \equiv 1$ $(\mathrm{mod}$ $3)$, $K_X^3$ is an even integer. By Proposition \ref{prop: upper bound of P2} (1), we deduce that
	$$
	P_2(X) \le 4K_X^3 - \frac{5(p_g(X) - 1)}{3}  + 7 = \frac{11}{3}p_g(X) - \frac{14}{3}.
	$$
	On the other hand, by Lemma \ref{lem: H1H2}, we have $\chi(\omega_X)=p_g(X)-1$. Thus the Riemann-Roch formula \eqref{eq: Riemann-Roch P2} for $X$ is equivalent to
	$$
	P_2(X) = \frac{1}{2}K_X^3 + 3(p_g(X) - 1) + l_2(X) =  \frac{11}{3}p_g(X) - \frac{14}{3} + l_2(X).
	$$
	As a result, we have $l_2(X) = 0$. Thus $X$ is Gorenstein. By \cite[Lemma 5.1]{Kawamata}, $X$ is factorial.
	
	In the following, we assume that $K_X^3 > \frac{4}{3} p_g(X) - \frac{10}{3}$. Suppose on the contrary that $K_X^3 < \frac{4}{3}p_g(X) - \frac{17}{6}$. Since $p_g(X) \equiv 1$ $(\mathrm{mod}$ $3)$, we deduce that
	$$
	\rounddown{2K_X^3} = \frac{8}{3}p_g(X) - \frac{20}{3}.
	$$
	By Proposition \ref{prop: upper bound of P2} (1) again, we have
	$$
	P_2(X) \le 2 \rounddown{2K_X^3} - \frac{5(p_g(X) - 1)}{3} + 7 = \frac{11}{3} p_g(X) - \frac{14}{3}.
	$$
	On the other hand, by the Riemann-Roch formula \eqref{eq: Riemann-Roch P2} and Lemma \ref{lem: H1H2}, we deduce that
	$$
	P_2(X) = \frac{1}{2}K_X^3 + 3(p_g(X) - 1) + l_2(X) > \frac{11}{3} p_g(X) - \frac{14}{3}.
	$$
	This is a contradiction. Therefore, we have
	$$
	K_X^3 \ge \frac{4}{3}p_g(X) - \frac{17}{6}.
	$$
	The proof is completed.
\end{proof}

\begin{prop} \label{prop: Noether2 dim 2}
	Suppose that $\dim \Sigma = 2$, $p_g(X) \ge 7$ and $p_g(X) \equiv 2$ $(\mathrm{mod}$ $3)$. Then
	\begin{equation} \label{eq: second Noether}
		K_X^3 \ge \frac{4}{3} p_g(X) - \frac{19}{6}.
	\end{equation}
	If the equality holds, then $X$ has only one non-Gorenstein terminal singularity, and it is of type $\frac{1}{2}(1, -1, 1)$. If the equality does not hold, then
	$$
	K_X^3 \ge \frac{4}{3}p_g(X) - \frac{17}{6}.
	$$
\end{prop}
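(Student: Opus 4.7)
My plan is to mimic the proof of Proposition~\ref{prop: Noether1 dim 2}, with the arithmetic adjusted to the residue class $p_g(X) \equiv 2 \pmod 3$. Since both claims are vacuous when $K_X^3 \ge \frac{4}{3}p_g(X) - 2$, I may assume $K_X^3 < \frac{4}{3}p_g(X) - 2$ and invoke Proposition~\ref{prop: fibered minimal model} to reduce to the case where $X$ admits a fibration $f \colon X \to \PP^1$ with general fiber a minimal $(1,2)$-surface; by Remark~\ref{rmk: d=pg-2}, $d = p_g(X) - 2$. Writing $p_g(X) = 3k+2$ so that $d = 3k$, one has $\roundup{2(d-2)/3} = 2k-1$, and Proposition~\ref{prop: Noether (1,2)-surface}(2) gives
\[
K_X^3 \ge 3k + \tfrac{1}{2}(2k-1) = 4k - \tfrac{1}{2} = \tfrac{4}{3}p_g(X) - \tfrac{19}{6},
\]
which is \eqref{eq: second Noether}.

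For the equality case $K_X^3 = 4k - \tfrac{1}{2}$, the conclusion of Proposition~\ref{prop: upper bound of P2}(2) (namely $K_X^3 \ge \frac{4}{3}p_g - \frac{17}{6} = 4k - \tfrac{1}{6}$) fails, so part (1) must apply. Since $2K_X^3 = 8k - 1 \in \ZZ$ and $\rounddown{3k - 8/3} = 3k - 3$, the bound reads $P_2(X) \le 11k + 3$. Using $\chi(\omega_X) = p_g(X) - 1$ from Lemma~\ref{lem: H1H2}, Reid's formula \eqref{eq: Riemann-Roch P2} gives $P_2(X) = 11k + \tfrac{11}{4} + l_2(X)$. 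Since $P_2(X) \in \ZZ$ but $11k + \tfrac{11}{4} \notin \ZZ$, we have $l_2(X) > 0$, hence $l_2(X) \ge \tfrac{1}{4}$ (the smallest positive basket contribution). Combined with $l_2(X) \le \tfrac{1}{4}$ from the upper bound, this forces $l_2(X) = \tfrac{1}{4}$, and fact (2) in the enumeration preceding Proposition~\ref{prop: Noether1 dim 2} identifies the singular locus as claimed.

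For the non-equality case, assume $K_X^3 > 4k - \tfrac{1}{2}$ and suppose for contradiction $K_X^3 < 4k - \tfrac{1}{6}$, so $K_X^3 \in (4k - \tfrac{1}{2}, 4k - \tfrac{1}{6})$. Again Proposition~\ref{prop: upper bound of P2}(2) fails and (1) holds. Throughout this interval $2K_X^3 \in (8k-1, 8k - \tfrac{1}{3})$ and $2K_X^3 - 5k - \tfrac{5}{3} \in (3k - \tfrac{8}{3}, 3k - 2)$, so the two floor values are constant at $8k - 1$ and $3k - 3$, again yielding $P_2(X) \le 11k + 3$. Riemann--Roch now gives $P_2(X) = \tfrac{1}{2}K_X^3 + 9k + 3 + l_2(X) > 11k + \tfrac{11}{4} + l_2(X)$, so the integrality of $P_2(X)$ together with $P_2(X) \le 11k + 3$ forces $P_2(X) = 11k + 3$ and $l_2(X) < \tfrac{1}{4}$. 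Since every nonzero $l_2$-value is at least $\tfrac{1}{4}$, we must have $l_2(X) = 0$, making $X$ Gorenstein; but then $K_X^3 \in 2\ZZ$, which is incompatible with $K_X^3 \in (4k - \tfrac{1}{2}, 4k - \tfrac{1}{6})$, a contradiction.

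The main subtlety is controlling the ``gap" interval $(\tfrac{4}{3}p_g - \tfrac{19}{6}, \tfrac{4}{3}p_g - \tfrac{17}{6})$; the saving observation is that, for $p_g \equiv 2 \pmod 3$, this length-$\tfrac{1}{3}$ interval sits strictly between two consecutive integers $4k-1$ and $4k$, so the two relevant floor functions in Proposition~\ref{prop: upper bound of P2}(1) are constant on it, and the combination of integrality of $P_2$ with the discreteness of admissible basket contributions to $l_2$ (minimum positive value $\tfrac{1}{4}$) rules the interval out cleanly.
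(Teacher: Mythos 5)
Your proof is correct and follows essentially the same route as the paper: reduce to the fibered situation via Proposition \ref{prop: fibered minimal model}, get the lower bound from Proposition \ref{prop: Noether (1,2)-surface} with $d = p_g(X)-2$, and play the $P_2$ upper bound of Proposition \ref{prop: upper bound of P2}(1) against Reid's formula \eqref{eq: Riemann-Roch P2} and Lemma \ref{lem: H1H2}. The only (harmless) difference is in the gap argument, where the paper deduces $l_2(X)\ge \frac14$ directly from the non-integrality of $K_X^3$ and contradicts the $P_2$ bound, whereas you first pin down $P_2(X)=11k+3$ by integrality and then rule out $l_2(X)=0$ via the Gorenstein parity of $K_X^3$; both are valid, and your explicit remark that alternative (2) of Proposition \ref{prop: upper bound of P2} cannot hold is a point the paper leaves implicit.
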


\begin{proof}
	As in the proof of Proposition \ref{prop: Noether1 dim 2}, we may assume that $K_X^3 < \frac{4}{3}p_g(X) - 2$. Thus all results in \S \ref{section: geometry} apply.
	
	Since $p_g(X) \equiv 2$ $(\mathrm{mod}$ $3)$, we have $\roundup{\frac{2(p_g(X)-4)}{3}} = \frac{2p_g(X) - 7}{3}$. By Proposition \ref{prop: Noether (1,2)-surface} and Remark \ref{rmk: d=pg-2}, we deduce that
	$$
	K_X^3 \ge p_g(X) - 2 + \frac{2p_g(X) - 7}{6} = \frac{4}{3}p_g(X) - \frac{19}{6}.
	$$
	Thus the inequality \eqref{eq: second Noether} holds.
	
	Now suppose that $K_X^3 = \frac{4}{3}p_g(X) - \frac{19}{6}$.
	Since $p_g(X) \equiv 2$ $(\mathrm{mod}$ $3)$, we know that $2K_X^3 = \frac{8}{3}(p_g(X) - 2) - 1$ is an odd integer. Thus $X$ is not Gorenstein. By the Riemann-Roch formula \eqref{eq: Riemann-Roch P2} and Lemma \ref{lem: H1H2}, we have
	$$
	P_2(X) = \frac{1}{2}K_X^3 + 3(p_g(X) - 1) + l_2(X) = \frac{11}{3}(p_g(X) - 2) + \frac{11}{4} + l_2(X).
	$$
	On the other hand, by Proposition \ref{prop: upper bound of P2} (1), we deduce that
	$$
	P_2(X) \le 4K_X^3 - \roundup{\frac{5(p_g(X) - 1)}{3}} + 7 = \frac{11}{3}(p_g(X) - 2) + 3.
	$$
	Therefore, we have $l_2(X) = \frac{1}{4}$. By \eqref{eq: l2}, $X$ has only one non-Gorenstein singularity, and it is of type $\frac{1}{2}(1, -1, 1)$.
	
	In the following, we assume that $K_X^3 > \frac{4}{3}p_g(X) - \frac{19}{6}$. Suppose on the contrary that $K_X^3 < \frac{4}{3}p_g(X) - \frac{17}{6}$. Then $\frac{8}{3}p_g(X) - \frac{19}{3} < 2K_X^3 < \frac{8}{3}p_g(X) - \frac{17}{3}$. Since $p_g(X) \equiv 2$ $(\mathrm{mod}$ $3)$, we deduce that
	$$
	\rounddown{2K_X^3} = \frac{8}{3}p_g(X) - \frac{19}{3}
	$$
	and
	$$
	\rounddown{2K_X^3 - \frac{5(p_g(X) - 1)}{3}} = p_g(X) - 5.
	$$
	Thus by Proposition \ref{prop: upper bound of P2} (1), we have
	$$
	P_2(X) \le \frac{11}{3}p_g(X) - \frac{13}{3} = \frac{11}{3}(p_g(X) - 2) + 3.
	$$
	On the other hand, now $K_X^3$ is not an integer. Thus $X$ is non-Gorenstein. By \eqref{eq: l2}, $l_2(X) \ge \frac{1}{4}$. By the Riemann-Roch formula \eqref{eq: Riemann-Roch P2} and Lemma \ref{lem: H1H2}, we have
	$$
	P_2(X) = \frac{1}{2}K_X^3 + 3 (p_g(X) - 1) + l_2(X) > \frac{11}{3}(p_g(X) - 2) + 3.
	$$
	This is a contradiction. Therefore, we have
	$$
	K_X^3 \ge \frac{4}{3}p_g(X) - \frac{17}{6}.
	$$
	The proof is completed.
\end{proof}

\begin{prop} \label{prop: Noether3 dim 2}
	Suppose that $\dim \Sigma = 2$, $p_g(X) \ge 7$ and $p_g(X) \equiv 0$ $(\mathrm{mod}$ $3)$. Then
	\begin{equation} \label{eq: third Noether}
		K_X^3 \ge \frac{4}{3} p_g(X) - 3.
	\end{equation}
	If the equality holds, then one of the following two cases occurs:
	\begin{itemize}
		\item $X$ has two non-Gorenstein terminal singularities, and they are of the same type $\frac{1}{2}(1, -1, 1)$;
		\item $X$ has only one non-Gorenstein terminal singularity, and it is of type $cA_1/\mu_2$.
	\end{itemize}
\end{prop}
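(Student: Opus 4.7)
The plan is to follow the same template used in Propositions \ref{prop: Noether1 dim 2} and \ref{prop: Noether2 dim 2}, adjusting the arithmetic for the residue class $p_g(X) \equiv 0 \pmod 3$. First, if $K_X^3 \ge \frac{4}{3}p_g(X) - 2$, the desired inequality $K_X^3 \ge \frac{4}{3}p_g(X) - 3$ is immediate, so we may assume $K_X^3 < \frac{4}{3}p_g(X) - 2$. Under this assumption Proposition \ref{prop: fibered minimal model} applies and the machinery of \S\ref{section: geometry} is available; moreover by Remark \ref{rmk: d=pg-2} we have $d = p_g(X) - 2$. Since $p_g(X) \equiv 0 \pmod 3$, an elementary computation gives
$$\roundup{\frac{2(p_g(X) - 4)}{3}} = \frac{2p_g(X) - 6}{3}.$$
Plugging this into Proposition \ref{prop: Noether (1,2)-surface}(2) yields
$$K_X^3 \ge (p_g(X) - 2) + \frac{1}{2}\cdot\frac{2p_g(X) - 6}{3} = \frac{4}{3}p_g(X) - 3,$$
proving \eqref{eq: third Noether}.

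Now assume equality $K_X^3 = \frac{4}{3}p_g(X) - 3$. Since $\frac{4}{3}p_g(X) - 3 < \frac{4}{3}p_g(X) - \frac{17}{6}$, alternative (2) of Proposition \ref{prop: upper bound of P2} cannot hold, so alternative (1) must. Writing $p_g(X) = 3k$ with $k \ge 3$, we have $2K_X^3 = 8k - 6 \in \ZZ$, and
$$2K_X^3 - \frac{5(p_g(X) - 1)}{3} = 3k - \frac{13}{3},$$
whose floor equals $3k - 5$. Therefore Proposition \ref{prop: upper bound of P2}(1) gives
$$P_2(X) \le (8k - 6) + (3k - 5) + 7 = \tfrac{11}{3}p_g(X) - 4.$$
On the other hand, Lemma \ref{lem: H1H2} gives $\chi(\omega_X) = p_g(X) - 1$, so Reid's Riemann--Roch formula \eqref{eq: Riemann-Roch P2} reads
$$P_2(X) = \frac{1}{2}K_X^3 + 3(p_g(X) - 1) + l_2(X) = \frac{11}{3}p_g(X) - \frac{9}{2} + l_2(X).$$

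Combining the two displays yields $l_2(X) \le \frac{1}{2}$. Because $P_2(X)$ is an integer and $\frac{11}{3}p_g(X) = 11k \in \ZZ$, we see that $l_2(X) - \frac{1}{2} \in \ZZ$, and since $l_2(X) \ge 0$ by \eqref{eq: l2}, this forces $l_2(X) = \frac{1}{2}$. The classification of singularities with $l_2(X) = \frac{1}{2}$ recorded immediately after \eqref{eq: l2} then gives exactly the two alternatives in the statement, completing the proof. The main substantive step is the parity argument pinning down $l_2(X) = \frac{1}{2}$ precisely; everything else reduces to the same arithmetic bookkeeping with floors and ceilings carried out in the two preceding propositions.
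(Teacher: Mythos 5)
Your proof is correct and follows essentially the same route as the paper's: the inequality via Proposition \ref{prop: Noether (1,2)-surface}(2) with $d=p_g(X)-2$, and the equality case by playing Proposition \ref{prop: upper bound of P2}(1) against Reid's formula \eqref{eq: Riemann-Roch P2} to pin down $l_2(X)=\tfrac{1}{2}$. Your explicit half-integrality argument (that $l_2(X)-\tfrac{1}{2}\in\ZZ$ because $P_2(X)$ is an integer) is in fact the cleanest way to close the gap from $l_2(X)\le\tfrac{1}{2}$ to equality, a step the paper leaves slightly implicit.
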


\begin{proof}
	Just as before, we may assume that $K_X^3 < \frac{4}{3}p_g(X) - 2$ so that all results in \S \ref{section: geometry} apply.
	
	Since $p_g(X) \equiv 0$ $(\mathrm{mod}$ $3)$, we have $\roundup{\frac{2(p_g(X)-4)}{3}} = \frac{2p_g(X) - 6}{3}$. By Proposition \ref{prop: Noether (1,2)-surface} and Remark \ref{rmk: d=pg-2}, we deduce that
	$$
	K_X^3 \ge p_g(X) - 2 + \frac{p_g(X) - 3}{3} = \frac{4}{3}p_g(X) - 3.
	$$
	Thus the inequality \eqref{eq: third Noether} holds.
	
	Suppose that $K_X^3 = \frac{4}{3}p_g(X) - 3$. Then $K_X^3$ is an odd integer. Thus $X$ is non-Gorenstein by \cite[$\S$ 2.2]{Chen_Chen_Zhang}. By the Riemann-Roch formula \eqref{eq: Riemann-Roch P2} and Lemma \ref{lem: H1H2},
	$$
	P_2(X) = \frac{1}{2}K_X^3 + 3 (p_g(X) - 1) + l_2(X) = \frac{11}{3}p_g(X) - \frac{9}{2} + l_2(X).
	$$
	By Proposition \ref{prop: upper bound of P2}, we deduce that
	$$
	P_2(X) \le 4K_X^3 - \roundup{\frac{5(p_g(X) - 1)}{3}} + 7 = \frac{11}{3}p_g(X) - 4.
	$$
	As a result, $l_2(X) = \frac{1}{2}$, and the description of the non-Gorenstein singularities on $X$ simply follows from \eqref{eq: l2}.
\end{proof}

\subsection{The case when $\dim \Sigma=1$} We have the following proposition.
\begin{prop} \label{prop: Noether dim 1}
	Suppose that $\dim \Sigma=1$ and $p_g(X) \ge 11$. Then
	$$
	K_X^3 \ge \frac{4}{3}p_g(X)-\frac{8}{3}.
	$$
\end{prop}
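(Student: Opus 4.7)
The plan is to mimic the strategy of Section~\ref{section: fibered model} and Section~\ref{section: geometry}, but now with a curve as the base of the induced fibration. First I would keep the notation $\pi: X' \to X$, $\pi^*K_X = M+Z$, $\phi_M: X' \to \Sigma' \to \Sigma$ from \S\ref{modification} and \S\ref{section: fibered model}. Because $\dim\Sigma = 1$, the Stein factorization produces a smooth curve $\Sigma'$ and $M$ is the pullback of a divisor $H$ on $\Sigma'$ with $h^0(\Sigma',H) \ge p_g(X)$. Writing $M \equiv aF'$ for $F'$ a general fiber of $\phi_M$ and $a = \deg H$, the nondegeneracy of $\Sigma \subseteq \PP^{p_g(X)-1}$ already forces $a \ge p_g(X)-1$.

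The second step is to reduce to the case $\Sigma' \simeq \PP^1$. Indeed, if $g(\Sigma') \ge 1$, Clifford's theorem applied to $H$ (which is special once $h^0(H) \ge 2$) gives $a \ge 2(p_g(X)-1)$, and then the trivial estimate
\[
K_X^3 \;\ge\; \bigl((\pi^*K_X)^2 \cdot M\bigr) \;\ge\; a\cdot \bigl((\pi^*K_X)|_{F'}\bigr)^2
\]
together with $\bigl((\pi^*K_X)|_{F'}\bigr)^2 \ge 1$ (which I explain below) already overshoots $\tfrac{4}{3}p_g(X)-\tfrac{8}{3}$ for $p_g(X) \ge 11$. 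Hence the essential case is $\Sigma' \simeq \PP^1$, giving a fibration $f': X' \to \PP^1$ with $M \equiv (p_g(X)-1)F'$ and $\pi^*K_X - (p_g(X)-1)F' = Z \ge 0$.

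The third step analyzes the general fiber $F'$ as a smooth surface. Since $X$ is of general type and $\pi^*K_X \ge (p_g(X)-1)F' + \text{effective}$, it follows that $F'$ is of general type, and I would let $\sigma: F' \to F_0$ be the contraction onto its minimal model. By the Chen--Chen--Jiang comparison $2(\pi^*K_X)|_{F'} - \sigma^*K_{F_0}$ is $\QQ$-effective (cf.\ \cite[Corollary~2.3]{Chen_Chen_Jiang}), and then the classical Noether inequality on surfaces $K_{F_0}^2 \ge 2p_g(F_0) - 4$ together with a lower bound on $p_g(F_0)$ (obtained by pushing $|K_X|$ restrictions down a general fiber, and using that $|M|$ already produces $p_g(X) - 1 \ge 10$ linearly independent sections on $F_0$ coming from the fibration) yields $\bigl((\pi^*K_X)|_{F'}\bigr)^2$ strictly larger than $1$. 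A careful accounting combining this estimate with the extra positivity contained in $Z$ (via $(\pi^*K_X)^2 \cdot Z \ge 0$ and the structure of $E_0$-type $\pi$-exceptional contributions as in Lemma~\ref{lem: E0}) should then push $K_X^3$ past $\tfrac{4}{3}p_g(X) - \tfrac{8}{3}$.

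The main obstacle is obtaining the sharp lower bound on $\bigl((\pi^*K_X)|_{F'}\bigr)^2$ uniformly in the invariants of $F'$: unlike in Proposition~\ref{prop: fibered minimal model} where $F_1$ is forced to be a $(1,2)$-surface, here $F'$ could a priori be a surface of general type with any permissible $(K_{F_0}^2, p_g(F_0))$. To rule out pathological low volume scenarios I would, as a backup, invoke an upper bound on $P_2(X)$ analogous to Proposition~\ref{prop: upper bound of P2} together with Reid's Riemann--Roch formula \eqref{eq: Riemann-Roch P2} and Lemma~\ref{lem: H1H2}: any $X$ with $K_X^3 < \tfrac{4}{3}p_g(X) - \tfrac{8}{3}$ would yield $P_2(X)$ smaller than the value forced by Riemann--Roch, giving a contradiction. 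This Riemann--Roch-plus-$P_2$ argument is the same engine that drives Proposition~\ref{prop: Noether1 dim 2}--\ref{prop: Noether3 dim 2}, so the hope is to adapt it, replacing the $(1,2)$-fiber estimate by the fibration-over-a-curve estimate outlined above.
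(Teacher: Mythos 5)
There is a genuine gap, and it is quantitative. Your core estimate is $K_X^3 \ge a\cdot\bigl((\pi^*K_X)|_{F'}\bigr)^2$ with $a\ge p_g(X)-1$, and you hope to bound $\bigl((\pi^*K_X)|_{F'}\bigr)^2$ from below using the surface Noether inequality on the fiber. But the critical case is precisely when $F'$ is a $(1,2)$-surface, where $K_{F_0}^2=1$ and $p_g(F_0)=2$; the comparison $2(\pi^*K_X)|_{F'}\ge_{\QQ}\sigma^*K_{F_0}$ then only yields $\bigl((\pi^*K_X)|_{F'}\bigr)^2\ge\frac14$. Even the most optimistic version of your bound, $\bigl((\pi^*K_X)|_{F'}\bigr)^2\ge 1$, gives only $K_X^3\ge p_g(X)-1$, which is \emph{strictly smaller} than $\frac43 p_g(X)-\frac83$ for every $p_g(X)\ge 11$. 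The coefficient $\frac43$ cannot be produced by this fiber-times-base count; it requires extracting an additional $\approx\frac13 p_g(X)$ from the horizontal part of $Z$, and you give no mechanism for that. Relatedly, your claim that $|M|$ restricted to a fiber produces $p_g(X)-1$ sections on $F_0$ is false here: when $\dim\Sigma=1$ the divisor $M$ is vertical, so $M|_{F'}\equiv 0$ and no lower bound on $p_g(F_0)$ follows. Your backup via $P_2$ and Riemann--Roch also does not transfer: Proposition \ref{prop: upper bound of P2} and Lemma \ref{lem: H1H2} are proved under the standing hypothesis of \S\ref{section: geometry} that the canonical image is a surface, and when it is a curve one can have $h^2(X,\CO_X)\ne 0$ (e.g.\ $f_*\omega_X=\CO_{\PP^1}(a)\oplus\CO_{\PP^1}(-2)$), so $\chi(\omega_X)\ne p_g(X)-1$ and the whole $P_2$ engine breaks down.

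The paper's actual proof is structured quite differently. Fibers that are not $(1,2)$-surfaces are disposed of by \cite[Theorems 4.4 and 4.5]{Chen_Chen_Jiang}, which give $K_X^3>2p_g(X)-6$. In the $(1,2)$-surface case, \cite[Corollary 3]{Chen_Chen_Jiang2} is invoked (this is where $p_g(X)\ge 11$ enters) to replace $X$ by another minimal model carrying an honest fibration $f:X\to B$ through which $\phi_{K_X}$ factors. If $g(B)\ge 1$ one concludes via $K_X^3\ge\frac43\chi(\omega_X)$ from \cite{Hu_Zhang}. If $B\simeq\PP^1$, then $f_*\omega_X=\CO_{\PP^1}(a)\oplus\CO_{\PP^1}(-b)$ with $b\in\{1,2\}$, and the key move --- entirely absent from your proposal --- is a degree-two base change branched at $2b$ points, which turns the canonical image of $\bar X$ into a \emph{surface} and reduces to the special-case machinery of \S\ref{subsection: pencil}: Proposition \ref{prop: Noether pencil} gives $K_{\bar X}^3\ge\frac43(2a+2b)+b-\frac23$ via the delicate limiting argument of Lemma \ref{lem: weak pseudo-effective} (the source of the coefficient $\frac43$), and the formula $K_{\bar X}^3=2K_X^3+6b$ then yields the claim. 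Without the base-change reduction and Proposition \ref{prop: Noether pencil}, your outline cannot close the numerical gap.
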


\begin{proof}
	Suppose that the connected component of a general fiber of $\phi_{K_X}$ is not a $(1, 2)$-surface. By \cite[Theorem 4.4 and Theorem 4.5]{Chen_Chen_Jiang}, we have
	$$
	K_X^3 > 2p_g(X) - 6 > \frac{4}{3}p_g(X)-\frac{8}{3}.
	$$

	In the following, we assume that the general fiber of $\phi_{K_X}$ is a $(1, 2)$-surface. Since $p_g(X) \ge 11$, by \cite[Corollary 3]{Chen_Chen_Jiang2}, after replacing $X$ by another minimal model, we may assume that there is a fibration $f: X \to B$ over a smooth curve $B$ with general fiber $F$ a minimal $(1,2)$-surface. Moreover, $\phi_{K_X}$ factors through $f$. If $g(B) > 0$, by \cite[Lemma 4.5 (i)]{Chen}, $g(B) = h^1(X, \CO_X) = 1$ and $h^2(X, \CO_X) = 0$. Thus by \cite[Theorem 1.1]{Hu_Zhang}, we have
	$$
	K_X^3 \ge \frac{4}{3} \chi(\omega_X) = \frac{4}{3} p_g(X).
	$$
	
	From now on, we assume that $B \simeq \PP^1$. Since $f_* \omega_{X/\PP^1}$ is nef, we may write
	$$
	f_*\omega_X = \CO_{\PP^1}(a) \oplus \CO_{\PP^1}(-b),
	$$
	where $a = p_g(X) - 1$ and $1 \le b \le 2$. Let $\nu: \bar{B} \to B$ be a double cover branched along $2b$ general points on $B$. Let $\bar{f}: \bar{X} \to \bar{B}$ be the base change of $f$ via $\nu$. Then it is clear that $\bar{X}$ is minimal, and the general fiber $\bar{F}$ of $\bar{f}$ is a minimal $(1, 2)$-surface. Moreover, we have
	$$
	\bar{f}_* \omega_{\bar{X}} = \CL \oplus \CO_{\bar{B}},
	$$
	where $\CL$ is a line bundle on $\bar{B}$ with degree $2a + 2b$. Thus the canonical image of $\bar{X}$ has dimension two. Denote by $\mu: \bar{X} \to X$ the induced double cover. Then we have
	\begin{equation} \label{eq: double cover}
		K_{\bar{X}}^3 = 2(K_X + bF)^3 = 2K_X^3 + 6b.
	\end{equation}
    If $b = 1$, by the Hurwitz formula, we know that $\bar{B} \simeq \PP^1$. Note that $K_{\bar{X}} - 2\bar{F} = \mu^*K_X$ is nef. By Proposition \ref{prop: Noether pencil} and \eqref{eq: double cover}, we have
	$$
	K_X^3 = \frac{1}{2}K_{\bar{X}}^3 - 3 \ge \frac{1}{2} \left(\frac{4}{3}(2a+2) + 2 - \frac{2}{3}\right) - 3 = \frac{4}{3}p_g(X) - \frac{7}{3}.
	$$
	If $b = 2$, by the Hurwitz formula, we know that $g(\bar{B}) = 1$. Now $K_{\bar{X}} - 4\bar{F} = \mu^*K_X$ is nef. By \cite[Remark 2.12]{Hu_Zhang} and \eqref{eq: double cover}, we have
	$$
	K_X^3 = \frac{1}{2}K_{\bar{X}}^3 - 6 \ge \frac{1}{2} \left(\frac{4}{3}(2a+4) + 4\right) - 6 = \frac{4}{3}p_g(X) - \frac{8}{3}.
	$$
	Thus the proof is completed.
\end{proof}

\subsection{Three Noether inequalities} Now we are ready to state the main theorem in this section.

\begin{theorem} \label{thm: Noether lines}
	Let $X$ be a minimal $3$-fold of general type with $p_g(X) \ge 11$.
	\begin{itemize}
		\item [(1)] (First Noether inequality) We have the optimal inequality
		$$
		K_X^3 \ge \frac{4}{3}p_g(X) - \frac{10}{3}.
		$$
		If the equality holds, then $p_g(X) \equiv 1$ $(\mathrm{mod}$ $3)$ and $X$ is Gorenstein. Moreover, it follows that $X$ is  factorial.
		
		\item [(2)] (Second Noether inequality) If $K_X^3 > \frac{4}{3}p_g(X) - \frac{10}{3}$, then we have the optimal inequality
		$$
		K_X^3 \ge \frac{4}{3}p_g(X) - \frac{19}{6}.
		$$
		If the equality holds, then $p_g(X) \equiv 2$ $(\mathrm{mod}$ $3)$. Moreover, $X$ has only one non-Gorenstein terminal singularity, and it is of type $\frac{1}{2}(1, -1, 1)$.
		
		\item [(3)] (Third Noether inequality) If $K_X^3 > \frac{4}{3}p_g(X) - \frac{19}{6}$, then we have the optimal inequality
		$$
		K_X^3 \ge \frac{4}{3}p_g(X) - 3.
		$$
		If the equality holds, then $p_g(X) \equiv 0$ $(\mathrm{mod}$ $3)$. Moreover, one of the following two cases occurs:
		\begin{itemize}
			\item $X$ has two non-Gorenstein terminal singularities, and they are of type $\frac{1}{2}(1, -1, 1)$;
			\item $X$ has only one non-Gorenstein terminal singularity, and it is of type $cA_1/\mu_2$.
		\end{itemize}
	\end{itemize}
\end{theorem}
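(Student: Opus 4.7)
The plan is to assemble the theorem from the case-analysis already carried out in Propositions~\ref{prop: Noether dim 3}, \ref{prop: Noether dim 1}, \ref{prop: Noether1 dim 2}, \ref{prop: Noether2 dim 2}, and \ref{prop: Noether3 dim 2}. First I would split on $\dim \Sigma$. When $\dim \Sigma = 3$, Proposition~\ref{prop: Noether dim 3} gives $K_X^3 \ge 2p_g(X) - 6$, which for $p_g(X) \ge 11$ is strictly larger than each of $\tfrac{4}{3}p_g(X) - \tfrac{10}{3}$, $\tfrac{4}{3}p_g(X) - \tfrac{19}{6}$, $\tfrac{4}{3}p_g(X) - 3$; when $\dim \Sigma = 1$, Proposition~\ref{prop: Noether dim 1} gives $K_X^3 \ge \tfrac{4}{3}p_g(X) - \tfrac{8}{3}$, which is again strictly stronger than all three stated bounds. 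Thus we may reduce to $\dim \Sigma = 2$ for both the lower bounds and, a fortiori, the equality cases.

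With $\dim \Sigma = 2$, I would subdivide according to $p_g(X) \bmod 3$ and apply Proposition~\ref{prop: Noether1 dim 2}, \ref{prop: Noether2 dim 2} or \ref{prop: Noether3 dim 2}, respectively. Recall the ordering $\tfrac{10}{3} > \tfrac{19}{6} > 3 > \tfrac{17}{6}$, which drives the comparison. For part~(1), all three residue classes yield $K_X^3 \ge \tfrac{4}{3}p_g(X) - \tfrac{10}{3}$ since $\tfrac{10}{3}$ is the largest constant among the three principal bounds; equality can only occur in the residue $p_g(X) \equiv 1 \pmod 3$, for otherwise the strictly larger bounds from the other two residues are violated, and then the "Gorenstein" conclusion of Proposition~\ref{prop: Noether1 dim 2} together with \cite[Lemma 5.1]{Kawamata} gives factoriality.

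For part~(2), under the hypothesis $K_X^3 > \tfrac{4}{3}p_g(X) - \tfrac{10}{3}$, the gap statement in Proposition~\ref{prop: Noether1 dim 2} upgrades the $p_g(X) \equiv 1 \pmod 3$ case to $K_X^3 \ge \tfrac{4}{3}p_g(X) - \tfrac{17}{6}$, which exceeds $\tfrac{4}{3}p_g(X) - \tfrac{19}{6}$; the $p_g(X) \equiv 0 \pmod 3$ case already gives $\tfrac{4}{3}p_g(X) - 3 > \tfrac{4}{3}p_g(X) - \tfrac{19}{6}$; and the $p_g(X) \equiv 2 \pmod 3$ case is governed directly by Proposition~\ref{prop: Noether2 dim 2}, providing both the sharp bound and the singularity type $\tfrac{1}{2}(1,-1,1)$ at equality. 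Part~(3) is analogous: under $K_X^3 > \tfrac{4}{3}p_g(X) - \tfrac{19}{6}$, the residues $1$ and $2$ both force $K_X^3 \ge \tfrac{4}{3}p_g(X) - \tfrac{17}{6} > \tfrac{4}{3}p_g(X) - 3$ via the gap statements of Propositions~\ref{prop: Noether1 dim 2} and \ref{prop: Noether2 dim 2}, so only the residue $0 \pmod 3$ can realize $\tfrac{4}{3}p_g(X) - 3$, and the singularity dichotomy follows from Proposition~\ref{prop: Noether3 dim 2}.

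Since the deep input (Reid's Riemann--Roch bookkeeping, the two key estimates Proposition~\ref{prop: Noether (1,2)-surface} and Proposition~\ref{prop: upper bound of P2}, and the reduction to a $(1,2)$-fibration via Proposition~\ref{prop: fibered minimal model}) has already been packaged into the five preceding propositions, the main task in this theorem is a careful numerical comparison across residue classes, taking particular care that the strict-inequality hypotheses in parts~(2) and (3) correctly exclude the residues that would otherwise realize smaller values. I do not anticipate any genuinely new obstruction; the subtlety is only in verifying that the gap $K_X^3 \ge \tfrac{4}{3}p_g(X) - \tfrac{17}{6}$ supplied by Propositions~\ref{prop: Noether1 dim 2} and \ref{prop: Noether2 dim 2} is strong enough to bridge consecutive Noether lines, which indeed it is because $\tfrac{17}{6} < \tfrac{19}{6}$ and $\tfrac{17}{6} < 3$.
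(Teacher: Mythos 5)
Your proposal is correct and follows essentially the same route as the paper, whose proof of Theorem~\ref{thm: Noether lines} is precisely the combination of Propositions~\ref{prop: Noether dim 3}, \ref{prop: Noether dim 1}, \ref{prop: Noether1 dim 2}, \ref{prop: Noether2 dim 2} and \ref{prop: Noether3 dim 2}, with the same elimination of the cases $\dim\Sigma=3$ and $\dim\Sigma=1$ and the same residue-by-residue comparison using the gap bound $K_X^3 \ge \frac{4}{3}p_g(X)-\frac{17}{6}$. The only ingredient you omit is the justification of the word \emph{optimal}: the paper additionally invokes the explicit examples of Proposition~\ref{prop: example} to show that each of the three inequalities is attained, so these really are the first three Noether lines.
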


\begin{proof}
	This is a combination of Proposition \ref{prop: Noether dim 3}, \ref{prop: Noether1 dim 2}, \ref{prop: Noether2 dim 2}, \ref{prop: Noether3 dim 2} and \ref{prop: Noether dim 1}. By the examples in Proposition \ref{prop: example} in \S \ref{subsection: example}, the above three inequalities are all optimal. Thus they are indeed the first three Noether lines.
\end{proof}

\begin{theorem} \label{thm: small volume}
	Let $X$ be a minimal $3$-fold of general type with $p_g(X) \ge 11$ and $K_X^3 < \frac{4}{3} p_g(X) - \frac{8}{3}$. Then the following statements hold:
	\begin{itemize}
		\item [(1)] $h^1(X, \CO_X) = h^2(X, \CO_X) = 0$;
		\item [(2)] The canonical image $\Sigma \subseteq \PP^{p_g(X) - 1}$ of $X$ is a non-degenerate surface of degree $p_g(X) - 2$, and $\Sigma$ is smooth when $p_g(X) \ge 23$;
		\item [(3)] $X$ is simply connected.
	\end{itemize}
\end{theorem}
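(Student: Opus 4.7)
The plan is to combine the dimension dichotomy of §4 with the structural Proposition \ref{prop: fibered minimal model} to reduce everything to the fibered model studied in §\ref{section: geometry}, and then extract the three conclusions one at a time. First I would show $\dim\Sigma=2$: Proposition \ref{prop: Noether dim 3} gives $K_X^3\ge 2p_g(X)-6$ when $\dim\Sigma=3$, which is incompatible with $K_X^3<\frac{4}{3}p_g(X)-\frac{8}{3}$ for every $p_g(X)>5$, while Proposition \ref{prop: Noether dim 1} forbids $\dim\Sigma=1$ directly under the hypothesis. Since $\frac{4}{3}p_g(X)-\frac{8}{3}<\frac{4}{3}p_g(X)-2$, Proposition \ref{prop: fibered minimal model} then applies: $\Sigma$ is a non-degenerate surface of degree $p_g(X)-2$, and there exists a minimal model $X_1$ birational to $X$ admitting a fibration $f\colon X_1\to\PP^1$ whose general fiber $F_1$ is a minimal $(1,2)$-surface. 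Every result of §\ref{section: geometry} and §4 for the fibered case is now available on $X_1$.

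For (1), I would apply Lemma \ref{lem: H1H2} to $(X_1,f)$ to get $h^1(X_1,\CO_{X_1})=h^2(X_1,\CO_{X_1})=0$; since $X$ and $X_1$ are birational threefolds with canonical (hence rational) singularities, birational invariance of these cohomology groups yields the same vanishing on $X$. For (3), observe that $f$ admits the canonical section $\Gamma$ from §\ref{subsection: general setting} and that a minimal $(1,2)$-surface is simply connected by Horikawa. Passing to a resolution $\widetilde{X_1}\to X_1$, the induced morphism $\widetilde{X_1}\to\PP^1$ has general fiber birational to $F_1$, hence still simply connected (since $\pi_1$ is a birational invariant of smooth projective varieties). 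Koll\'ar's homotopy exact sequence for an algebraic fibration with connected general fiber then gives $\pi_1(\widetilde{X_1})=\pi_1(\PP^1)=1$, and the fact that rational singularities do not affect the fundamental group (Takayama) yields $\pi_1(X)=\pi_1(X_1)=1$.

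The smoothness part of (2) for $p_g(X)\ge 23$ is the main obstacle. By Lemma \ref{lem: nef Hodge bundle}, $\Sigma$ is smooth iff $f_*\omega_{X_1}$ is ample, i.e., iff $\Sigma$ is not a cone. One first checks that each of the three Noether-line values $\frac{4}{3}p_g-\frac{10}{3}$, $\frac{4}{3}p_g-\frac{19}{6}$, $\frac{4}{3}p_g-3$ is strictly less than $\frac{4}{3}p_g-\frac{17}{6}$, so Proposition \ref{prop: 2K-F} already prevents the cone case on each Noether line when $p_g(X)\ge 23$. What remains is the narrow off-Noether window $\bigl[\frac{4}{3}p_g(X)-\frac{17}{6},\,\frac{4}{3}p_g(X)-\frac{8}{3}\bigr)$, of length $\tfrac{1}{6}$. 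The natural strategy is to sharpen Lemma \ref{lem: 2K-F} by a Kawamata--Viehweg vanishing with a slightly more positive twist than the one used there, showing that $f_*\CO_X(2K_X)\otimes\CO_{\PP^1}(-2)$ is nef for $p_g(X)\ge 23$; this upgrades $2K_X-F$ nef to $K_X-F$ nef and, through Proposition \ref{prop: Noether pencil} with $b=1$, gives $K_X^3\ge\frac{4}{3}p_g(X)-\frac{7}{3}$, contradicting $K_X^3<\frac{4}{3}p_g(X)-\frac{8}{3}$. The hard part is carrying out this refinement uniformly in $p_g(X)\ge 23$ while remaining compatible with the global log canonical threshold estimate $\mathrm{glct}(F)\ge\tfrac{1}{10}$ used for the general $(1,2)$-surface.
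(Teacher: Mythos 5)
Your overall architecture coincides with the paper's: rule out $\dim\Sigma=3$ by Proposition \ref{prop: Noether dim 3} and $\dim\Sigma=1$ by Proposition \ref{prop: Noether dim 1}, invoke Proposition \ref{prop: fibered minimal model} to get the non-degenerate surface image of degree $p_g(X)-2$ together with the fibration by $(1,2)$-surfaces, prove (1) by Lemma \ref{lem: H1H2} plus birational invariance, and prove (3) via the section $\Gamma$ (hence no multiple fibers), simple connectedness of a $(1,2)$-surface, the homotopy exact sequence of the fibration, and Takayama's theorem. This is exactly what the paper does, and those parts of your write-up are fine.

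The gap is in the smoothness of $\Sigma$ for $p_g(X)\ge 23$, precisely the step you flag as ``the main obstacle''. The paper's own proof here is a one-line citation: if $\Sigma$ is a cone, then by Lemma \ref{lem: nef Hodge bundle} one has $f_*\omega_X=\CO_{\PP^1}(p_g(X)-2)\oplus\CO_{\PP^1}$, the setting of \S\ref{subsection: pencil} applies, and Proposition \ref{prop: 2K-F} gives $K_X^3\ge\frac{4}{3}p_g(X)-\frac{17}{6}$; the authors conclude $b>0$ from this. You are right that $\frac{4}{3}p_g(X)-\frac{17}{6}<\frac{4}{3}p_g(X)-\frac{8}{3}$, so this citation only yields a contradiction when $K_X^3<\frac{4}{3}p_g(X)-\frac{17}{6}$ --- which does cover all three Noether lines, i.e.\ every case in which Theorems \ref{thm: main1} and \ref{thm: main3} actually invoke the statement --- and the paper supplies nothing further for the residual window $\left[\frac{4}{3}p_g(X)-\frac{17}{6},\,\frac{4}{3}p_g(X)-\frac{8}{3}\right)$. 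Your proposed repair, however, is only announced, not executed, and is doubtful as stated: upgrading Lemma \ref{lem: 2K-F} to the nefness of $f_*\CO_X(2K_X)\otimes\CO_{\PP^1}(-2)$ would require running the Kawamata--Viehweg vanishing there with a third fiber subtracted, i.e.\ the nef-and-bigness of roughly $\left(1-\frac{2}{a}\right)\pi^*K_X-F'$, which is essentially the nefness of $K_X-F$ that you are trying to establish --- the argument is circular. As a standalone proof of part (2) for the full stated range of $K_X^3$ your proposal is therefore incomplete on that window; to match the paper you should simply quote Proposition \ref{prop: 2K-F} as the authors do, and if you insist on handling the window you need a genuinely new positivity input rather than the sketched refinement.
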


\begin{proof}
	Since $p_g(X) \ge 11$ and $K_X^3 < \frac{4}{3} p_g(X) - \frac{8}{3}$, by Proposition \ref{prop: Noether dim 3} and \ref{prop: Noether dim 1}, we know that the canonical image of $X$ is a surface. By Proposition \ref{prop: fibered minimal model}, we may assume that $X$ admits a fibration $f: X \to \PP^1$ with general fiber $F$ a $(1, 2)$-surface. Then the statement (1) just follows from Lemma \ref{lem: H1H2}.
	
	By Proposition \ref{prop: fibered minimal model}, the canonical image $\Sigma \subseteq \PP^{p_g(X) - 1}$ of $X$ is a non-degenerate surface of degree $p_g(X) - 2$. Now we have
	$$
	f_* \omega_X = \CO_{\PP^1}(a) \oplus \CO_{\PP^1}(b)
	$$
	with $a \ge b \ge 0$ and $a + b = p_g(X) - 2$. If $p_g(X) \ge 23$, by Proposition \ref{prop: 2K-F}, we have $b > 0$. By Lemma \ref{lem: nef Hodge bundle}, $\Sigma$ is smooth. Thus the statement (2) holds.
	
	For (3), let $\alpha: X_1 \to X$ be a resolution of singularities of $X$ such that $\alpha$ is an isomorphism over the smooth locus of $X$, and let $f_1: X_1 \to \PP^1$ be the induced fibration with a general fiber $F_1$. Recall that $f$ has a natural section $\Gamma$. Since $X$ has only terminal singularities, we conclude that $\Gamma_1 := \alpha_*^{-1} \Gamma$ is also a section of $f_1$. In particular, $f_1$ has no multiple fibers. Now we may use the same argument as in the proof of \cite[Theorem 4.7]{Hu_Zhang} to deduce that
	$$
	\pi_1(X_1) \simeq \pi_1(\PP^1) \simeq \{1\},
	$$
	i.e., $X_1$ is simply connected. Thus by a result of S. Takayama \cite[Theorem 1.1]{Takayama}, $X$ is simply connected.
\end{proof}

\begin{remark}
	Note that if two smooth complex projective varieties are birational to each other, then their fundamental groups are isomorphic. Thus the above proof actually shows that any smooth model of $X$ is simply connected.
\end{remark}


\section{Threefolds on the Noether lines}

In this section, we first classify minimal $3$-folds of general type with $p_g(X) \ge 11$ which attain the Noether equality in Theorem \ref{thm: Noether lines} (1). Then we construct examples of minimal $3$-folds of general type which attain the three Noether equalities in Theorem \ref{thm: Noether lines}, respectively.

\subsection{Classification of $3$-folds on the first Noether line}
Throughout this subsection, let $X$ be a minimal $3$-fold of general type with $p_g(X) \ge 11$ satisfying $K_X^3 = \frac{4}{3}p_g(X) - \frac{10}{3}$. By Theorem \ref{thm: Noether lines} (1), $K_X$ is Cartier and $p_g(X) = 3m - 2$ for an integer $m \ge 5$. By Theorem \ref{thm: small volume}, the canonical image of $X$ is a surface of degree $p_g(X) - 2$. Thus by Proposition \ref{prop: fibered minimal model}, replacing $X$ by another minimal model if necessary, we may assume that $X$ admits a fibration $f: X \to \PP^1$ with a general fiber $F$ a $(1, 2)$-surface.

Just as in \S \ref{subsection: general setting}, we have the following commutative diagram:
$$
\xymatrix{
	& & X' \ar[d]_{\pi} \ar[lld]_{f'} \ar[drr]^{\psi} & &  \\
	\mathbb{P}^1 & & X  \ar@{-->}[rr]_{\phi_{K_X}} \ar[ll]^f  & & \Sigma
}
$$
Here $\phi_{K_X}$ is the canonical map of $X$ with the canonical image $\Sigma$, $\pi$ is a birational modification with respect to $K_X$ as in \S \ref{modification}, $\psi$ is the morphism induced by $|M| = \movable|\pi^*K_X|$, and $f' = f \circ \pi$. Note that by Proposition \ref{prop: fibered minimal model}, $\psi$ has connected fibers.

Let
$$
Z = \pi^*K_X - M, \quad E_{\pi} = K_{X'} - \pi^*K_X.
$$
Since $X$ is Gorenstein and terminal, both $Z$ and $E_{\pi}$ are effective Cartier divisors. Denote by $C$ a general fiber of $\psi$ and by $F'$ a general fiber of $f'$. Recall that $f$ admits a section $\Gamma$ such that $\Gamma \cap F$ is the unique base point of $|K_F|$. Note that $\Gamma \simeq \PP^1$ is a smooth curve.

\subsubsection{More properties of $X$}
We recall some results in the proof of Proposition \ref{prop: Noether (1,2)-surface}. Let $S \in |M|$ be a general member. By Bertini's theorem, $S$ is smooth. Let $\sigma: S \to S_0$ be the contraction to the minimal model of $S$. As in the proof of Proposition \ref{prop: Noether (1,2)-surface}, the natural fibration $\psi|_S: S \to \PP^1$ descends to a fibration $S_0 \to \PP^1$ with a general fiber $C_0 = \sigma_* C$.

Let $E$ be the unique $\pi$-exceptional prime divisor as in Lemma \ref{lem: E0}. Denote $\Gamma_S = E|_S$. By Lemma \ref{lem: E0}, $\Gamma_S$ is a section of $\psi|_S$, and we may write
$$
E_\pi|_S = \Gamma_S + E_V, \quad Z|_S = \Gamma_S + Z_V.
$$
Here both $E_V$ and $Z_V$ are effective divisors on $S$, and $(E_V \cdot C) = (Z_V \cdot C) = 0$.

\begin{lemma} \label{lem: Noether line relation}
	The following statements hold:
	\begin{itemize}
		\item [(1)] $(K_X \cdot \Gamma) = \frac{p_g(X) - 4}{3}$;
		\item [(2)] $\sigma_*(E_V + Z_V) = 0$;
		\item [(3)] $K_{S_0} \equiv 2(p_g(X)-2)C_0 + 2\Gamma_{S_0}$, where $\Gamma_{S_0} := \sigma_* \Gamma_S$;
		\item [(4)] $\sigma^*K_{S_0} \sim_{\QQ} 2(\pi^*K_X)|_S$;
		\item [(5)] $E_V - Z_V$ is an effective divisor on $S$.
	\end{itemize}
\end{lemma}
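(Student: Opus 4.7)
The plan is to deduce items (1)--(4) from the equality cases of Propositions \ref{prop: Noether (1,2)-surface} and \ref{prop: Noether equality (1,2)-surface}, and to treat item (5) by a separate negative-definiteness argument. First I observe that $d = p_g(X) - 2 = 3m-4 \equiv 2 \pmod{3}$ and
$$
K_X^3 = \tfrac{4}{3}p_g(X) - \tfrac{10}{3} = 4m-6 = d + \tfrac{1}{2}\roundup{\tfrac{2(d-2)}{3}},
$$
so we are in the equality case of Proposition \ref{prop: Noether (1,2)-surface}(2). Consequently Proposition \ref{prop: Noether equality (1,2)-surface} applies, and its \emph{moreover} clause (under $d \equiv 2 \pmod{3}$) immediately gives $\sigma_*(E_V + Z_V) = 0$ and $K_{S_0} \equiv 2(p_g(X)-2)C_0 + 2\Gamma_{S_0}$, which are (2) and (3); item (iii) of the same proposition gives (4).

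For (1), I would revisit the estimate for $(K_{S_0}\cdot\Gamma_{S_0})$ obtained in the proof of Proposition \ref{prop: Noether (1,2)-surface}. Substituting $\sigma_*(E_V+Z_V) = 0$ collapses it to the equality $(K_{S_0}\cdot\Gamma_{S_0}) = \tfrac{2d-4}{3} = 2m-4$. Combining this with (4) and $\sigma_*\Gamma_S = \Gamma_{S_0}$ yields
$$
(K_X\cdot\Gamma) = \left((\pi^*K_X)|_S\cdot\Gamma_S\right) = \tfrac{1}{2}\left((\sigma^*K_{S_0})\cdot\Gamma_S\right) = \tfrac{1}{2}(K_{S_0}\cdot\Gamma_{S_0}) = m-2 = \tfrac{p_g(X)-4}{3}.
$$

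For (5), my plan is to compute the discrepancy divisor $E_\sigma := K_S - \sigma^*K_{S_0}$ in two ways. Using adjunction, $K_{X'} = \pi^*K_X + E_\pi$, $\pi^*K_X = M + Z$, and the decompositions $E_\pi|_S = \Gamma_S + E_V$, $Z|_S = \Gamma_S + Z_V$, $M|_S \sim dC$, I get
$$
K_S \sim (K_{X'} + S)|_S \sim (2M + Z + E_\pi)|_S \sim 2dC + 2\Gamma_S + E_V + Z_V.
$$
On the other hand, (4) gives $\sigma^*K_{S_0} \sim_{\QQ} 2(\pi^*K_X)|_S \sim_{\QQ} 2dC + 2\Gamma_S + 2Z_V$. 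Subtracting, $E_\sigma \sim_{\QQ} E_V - Z_V$. Both divisors are supported on the $\sigma$-exceptional locus: $E_\sigma$ by construction of the minimal model contraction, and $E_V - Z_V$ because $\sigma_*E_V = \sigma_*Z_V = 0$ (as $E_V, Z_V \ge 0$ with $\sigma_*(E_V+Z_V)=0$). Since the $\sigma$-exceptional intersection form is negative definite, a $\sigma$-exceptional $\QQ$-divisor that is $\QQ$-linearly equivalent to zero must vanish. Hence $E_V - Z_V = E_\sigma \ge 0$.

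I expect (5) to be the only step requiring more than bookkeeping: the rigidity of the exceptional configuration is what upgrades the $\QQ$-linear equivalence $E_V - Z_V \sim_{\QQ} E_\sigma$ to an equality of divisors, which is exactly what gives the effectivity of $E_V - Z_V$.
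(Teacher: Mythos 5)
Your proof is correct and follows essentially the same route as the paper: items (1)--(4) are read off from the equality cases of Propositions \ref{prop: Noether (1,2)-surface} and \ref{prop: Noether equality (1,2)-surface} exactly as in the text (your detour for (1) through $(K_{S_0}\cdot\Gamma_{S_0})=\frac{2d-4}{3}$ and statement (4) is just the paper's chain of equalities read in a different order), and for (5) you arrive at the same key identity $E_V - Z_V \sim_{\QQ} K_S - \sigma^*K_{S_0}$. The only divergence is the final step of (5): the paper upgrades this $\QQ$-linear equivalence to effectivity via the rigidity $h^0(S, nE_V) = h^0(S_0, \CO_{S_0}) = 1$, which forces the effective member $nZ_V + D$ of $|nE_V|$ to coincide with $nE_V$, whereas you use negative definiteness of the $\sigma$-exceptional intersection form to conclude $E_V - Z_V = K_S - \sigma^*K_{S_0}$ on the nose. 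Both are standard one-line rigidity arguments; yours has the mild advantage of needing only numerical equivalence (so it is insensitive to whether $M|_S$ is linearly or merely numerically equivalent to $dC$ --- the $M|_S$-terms cancel in the subtraction anyway), and it yields the slightly stronger conclusion that $E_V - Z_V$ equals the relative canonical divisor of $\sigma$.
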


\begin{proof}
	Since $K_X^3 = \frac{4}{3}p_g(X) - \frac{10}{3}$, $X$ attains the equality in Proposition \ref{prop: Noether (1,2)-surface} (2) in which $d = p_g(X) - 2$. Thus by Proposition \ref{prop: Noether (1,2)-surface}, we have
	$$
	(K_X \cdot \Gamma) = K_X^3 - d = \frac{p_g(X) - 4}{3}.
	$$
	The statements (2)--(4) follow directly from Proposition \ref{prop: Noether equality (1,2)-surface}.
	For (5), note that
	$$
	E_V - Z_V = E_\pi|_S - Z|_S = (K_{X'}+S)|_S - 2 (\pi^*K_X)|_S = K_S - 2 (\pi^*K_X)|_S.
	$$
	Since $K_S \ge \sigma^*K_{S_0}$, by (4), there exist an integer $n > 0$ and an effective divisor $D$ on $S$ such that $nE_V \sim D + nZ_V$. By (2), we have $h^0(S, nE_V) = h^0(S_0, \CO_{S_0}) = 1$. Thus
	$$
	nE_V = D + nZ_V.
	$$
	It follows that $E_V - Z_V = \frac{1}{n}D$ is effective.
\end{proof}

Denote $S_X = \pi_*S$. Then we have the following lemma.
\begin{lemma} \label{lem: Noether line KX}
	The linear system $|K_X|$ has no fixed part. Moreover, the surface $S_X \in |K_X|$ is normal with at worst canonical singularities. Moreover, $\pi|_S: S \to S_X$ factors through $\sigma: S \to S_0$.
\end{lemma}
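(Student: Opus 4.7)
I will prove the three assertions in order, exploiting the identifications from Lemma \ref{lem: Noether line relation}. The starting computation is an adjunction on a general $S\in|M|$:
\[
K_S = (K_{X'} + S)|_S = 2M|_S + 2\Gamma_S + E_V + Z_V,
\]
which combined with the decomposition $K_S = \sigma^*K_{S_0} + E_\sigma$ (from $S_0$ being the minimal model of $S$, with $E_\sigma\ge 0$ the $\sigma$-exceptional divisor) and $\sigma^*K_{S_0}\sim_{\QQ} 2M|_S + 2\Gamma_S + 2Z_V$ from Lemma \ref{lem: Noether line relation}(4) yields $E_\sigma = E_V - Z_V$. In particular $Z_V$ is itself $\sigma$-exceptional on $S$.

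\textbf{Vanishing of the fixed part of $|K_X|$.} Since the fixed part of $|K_X|$ equals $\pi_*Z$, it suffices to prove $Z$ is $\pi$-exceptional. Suppose to the contrary that $D$ is a horizontal prime component of $Z$. As $D\ne E_0$, the restriction $D|_S$ is a nonzero effective divisor contained in $Z_V$, whence every irreducible component $\gamma$ of $D|_S$ is $\sigma$-exceptional and Lemma \ref{lem: Noether line relation}(4) forces $((\pi^*K_X)\cdot\gamma) = \frac{1}{2}(\sigma^*K_{S_0}\cdot\gamma) = 0$. On the other hand, since $D$ is horizontal, $\pi|_D\colon D\to\pi(D)$ is generically finite, so $\pi(D)\cap S_X$ is a curve on $X$, meaning $\pi|_S$ does not contract every component of $D|_S$. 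The main obstacle will be to turn this tension into a numerical contradiction; my plan is to intersect $D$ with a second general member $S'\in|M|$, expressing $(D\cdot M^2)$ and $(D\cdot M\cdot Z)$ in terms of quantities pinned down by the equality $K_X^3 = \frac{4}{3}p_g(X)-\frac{10}{3}$ and Lemma \ref{lem: E0}, thereby forcing $D$ to be $\pi$-exceptional.

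\textbf{Normality of $S_X$ and factorization through $\sigma$.} Once $|K_X|$ has no fixed part and $K_X$ is Cartier (Theorem \ref{thm: Noether lines}(1)), the surface $S_X$ is a general member of $|K_X|$, whose divisorial base locus is empty and whose non-divisorial base locus is contained in $\Gamma$. Bertini makes $S_X$ smooth away from $\Gamma$ and the (isolated, cDV) terminal singularities of $X$; at each cDV point of $X$ a general hyperplane section is DuVal; the local behavior along $\Gamma$, controlled by $(\pi^*K_X)|_S = \frac{1}{2}\sigma^*K_{S_0}$, shows $S_X$ acquires at worst rational double points along $\Gamma$. Hence $S_X$ is normal with at worst canonical singularities. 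For the factorization, let $\epsilon\colon \tilde{S}_X\to S_X$ be the minimal resolution. Since $S_X$ has only DuVal singularities, $K_{\tilde{S}_X} = \epsilon^*K_{S_X}$ is nef, and $K_{\tilde{S}_X}^2 = K_{S_X}^2 = K_X^3 > 0$, so $\tilde{S}_X$ is a smooth minimal surface of general type; by Lemma \ref{lem: Noether line relation}(3) and $K_{S_0}^2 > 0$, the same holds for $S_0$. As $\tilde{S}_X$ and $S_0$ are both smooth minimal models birational to $S$, uniqueness of the minimal model in the general-type case gives $S_0\cong\tilde{S}_X$, and the factorization $\pi|_S\colon S\xrightarrow{\sigma} S_0\cong\tilde{S}_X\xrightarrow{\epsilon} S_X$ follows.
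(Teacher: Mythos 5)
Your argument for the central claim --- that every component of $Z$ is $\pi$-exceptional --- is not actually completed: after setting up the ``tension'' between $((\pi^*K_X)\cdot\gamma)=0$ and the non-contraction of $\gamma$ by $\pi|_S$, you write that your \emph{plan} is to intersect $D$ with a second general member of $|M|$ and derive a numerical contradiction, but you never carry this out. Note that the tension as stated is not yet a contradiction: $K_X$ is only nef (not ample), so a curve $\gamma\subseteq D|_S$ with $((\pi^*K_X)\cdot\gamma)=0$ that is not contracted by $\pi$ simply maps to a $K_X$-trivial curve on $X$, and such curves do exist on minimal $3$-folds. Indeed, from $((\pi^*K_X)|_S)^2=K_X^3$ (forced by the Noether equality via Proposition \ref{prop: Noether equality (1,2)-surface}) one gets $((\pi^*K_X)^2\cdot Z)=0$ for free, so the intersection numbers you propose to compute only reproduce the information you already have; ruling out a horizontal fixed component requires a further input (this is the content of the cited \cite[Lemma 4.4]{Hu_Zhang}, to which the paper defers its entire proof). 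Also, the inference ``$E_\sigma=E_V-Z_V$, in particular $Z_V$ is $\sigma$-exceptional'' is a non sequitur as written (a difference being exceptional says nothing about the individual terms); the correct source is Lemma \ref{lem: Noether line relation}(2), i.e.\ $\sigma_*(E_V+Z_V)=0$, which does give what you want since $E_V,Z_V\ge 0$.

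The second half has similar problems. You assume the non-divisorial base locus of $|K_X|$ is contained in $\Gamma$, but that is Proposition \ref{prop: Noether line KX}(1), which is proved \emph{after} and \emph{using} this lemma; at this stage Bertini only localizes $\mathrm{Sing}(S_X)$ to $\pi(Z)\cup\mathrm{Sing}(X)$, a priori a one-dimensional set. Consequently the key point for normality --- that $S_X$ is regular in codimension one --- is exactly what needs proof, and the sentence ``the local behavior along $\Gamma$, controlled by $(\pi^*K_X)|_S=\frac{1}{2}\sigma^*K_{S_0}$, shows $S_X$ acquires at worst rational double points'' contains no argument. (The $S_2$ condition is fine, since $S_X$ is Cartier in a terminal, hence Cohen--Macaulay, $3$-fold.) By contrast, once normality and the Du Val property are granted, your derivation of the factorization through $\sigma$ via uniqueness of minimal models of surfaces of general type is correct, apart from the harmless slip $K_{S_X}^2=K_X^3$ (adjunction gives $K_{S_X}=2K_X|_{S_X}$, so $K_{S_X}^2=4K_X^3$).
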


\begin{proof}
	The proof is just identical to that of \cite[Lemma 4.4 and 4.5]{Hu_Zhang}. Thus we omit the proof here and leave it to the interested reader.
\end{proof}

\begin{prop} \label{prop: Noether line KX}
	The following statements hold:
	\begin{itemize}
		\item [(1)] $\baselocus|K_X| = \Gamma$, and $\Gamma$ lies in the smooth locus of $X$;
		\item [(2)] $S_X = S_0$ is smooth;
		\item [(3)]  Let $\pi_\Gamma: X_{\Gamma} \to X$ be the blow-up of $X$ along $\Gamma$. Then $|\pi_\Gamma^*K_X - E_\Gamma|$ is base point free, where $E_\Gamma$ is the $\pi_\Gamma$-exceptional divisor.
	\end{itemize}
\end{prop}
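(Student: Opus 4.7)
The plan is to prove statements (2), (1), (3) in this order, since smoothness of $S_X$ is the main engine for the other two. For (2), Lemma~\ref{lem: Noether line KX} gives the factorization $\pi|_S = \tau_0 \circ \sigma$, where $\sigma : S \to S_0$ is the contraction to the minimal model and $\tau_0 : S_0 \to S_X$ is birational onto the normal canonical surface $S_X$. Combining $\sigma^*K_{S_0} \sim_{\QQ} 2(\pi^*K_X)|_S$ from Lemma~\ref{lem: Noether line relation}(4) with the adjunction identity $K_{S_X} = (K_X + S_X)|_{S_X} = 2K_X|_{S_X}$, I would first verify $\tau_0^*K_{S_X} = K_{S_0}$, so $\tau_0$ is crepant and every $\tau_0$-exceptional curve is a $(-2)$-curve. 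The fibration $\psi|_S$ descends through $\sigma$ to a genus-$2$ fibration $S_0 \to \PP^1$, and then through $\tau_0$ to $f|_{S_X} : S_X \to \PP^1$, which has $\Gamma$ as a section. Hence any $\tau_0$-exceptional curve is vertical, and from the class $K_{S_0} \equiv 2(p_g(X)-2)C_0 + 2\Gamma_{S_0}$ of Lemma~\ref{lem: Noether line relation}(3) it must also be disjoint from $\Gamma_{S_0}$.

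The main obstacle is ruling out such vertical $(-2)$-curves. My plan is to lift them back to $X'$: any such curve $D \subset S_0$ would give a singular point $p = \tau_0(D) \in S_X \setminus \Gamma$, and its strict transform $\sigma_*^{-1}D \subset S$ is $\pi|_S$-contracted, forcing $p$ to lie in the image of a $\pi$-exceptional divisor distinct from the divisor $E$ of Lemma~\ref{lem: E0}. Using the tight constraints of the equality case, namely $\sigma_*(E_V + Z_V) = 0$ and $E_V - Z_V \ge 0$ from Lemma~\ref{lem: Noether line relation}(2) and (5), which together with $K_S = 2(\pi^*K_X)|_S + (E_V - Z_V)$ identify the $\sigma$-exceptional divisor on $S$ precisely with $E_V - Z_V$, combined with the minimality of the base-locus resolution in the construction of $\pi$ from \S~\ref{modification}, I would show that no such extra $\pi$-exceptional divisor exists over $p$, concluding that $\tau_0$ is an isomorphism and $S_X = S_0$ is smooth.

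With (2) in hand, part (1) is straightforward. Since $X$ has only isolated terminal Gorenstein (compound du Val) singularities, any singular point of $X$ on the Cartier divisor $S_X$ would force a singularity of $S_X$, contradicting smoothness. Hence $\Gamma \subset X_{\mathrm{sm}}$. Lemma~\ref{lem: Noether line KX} already guarantees $|K_X|$ has no fixed part, so $\baselocus|K_X|$ is at most one-dimensional; the inclusion $\Gamma \subseteq \baselocus|K_X|$ follows because $\Gamma \cap F$ is the base point of $|K_F|$ on every general fiber $F$. For the reverse inclusion, any isolated base point outside $\Gamma$ would, after restriction to the smooth surface $S_X$ and pullback through $\pi|_S$, produce a base point of $|M|$ on $X'$, contradicting base-point-freeness of $|M|$.

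For (3), with $\Gamma \simeq \PP^1$ smooth in the smooth locus of $X$, the blow-up $\pi_\Gamma : X_\Gamma \to X$ is an honest smooth blow-up. Restricting to a general fiber $F$ shows that $\Gamma$ has multiplicity one in $\baselocus|K_X|$, since $\Gamma \cap F$ is a simple base point of $|K_F|$ on the minimal $(1,2)$-surface $F$. Hence the strict transform of $|K_X|$ under $\pi_\Gamma$ is exactly $|\pi_\Gamma^*K_X - E_\Gamma|$. I would then verify that $\pi : X' \to X$ factors through $\pi_\Gamma$ and that $|M|$ on $X'$ is the pullback of $|\pi_\Gamma^*K_X - E_\Gamma|$, so that the base-point-freeness of $|M|$ descends from $X'$ to $X_\Gamma$, establishing (3).
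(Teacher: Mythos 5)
The central gap is in your treatment of the inclusion $\baselocus|K_X| \subseteq \Gamma$ (and, because you prove (2) first, the same gap reappears there). You claim that an isolated base point $q \notin \Gamma$ would ``produce a base point of $|M|$ on $X'$'', but this is not so: $|M|$ is base point free by construction no matter what $\baselocus|K_X|$ looks like, because $\pi$ was chosen precisely to resolve it. A base point of $|K_X|$ away from $\Gamma$ corresponds instead to a component of the vertical part $Z_V$ of $Z|_S = \Gamma_S + Z_V$ lying over $q$, and nothing in your argument excludes such components. The paper excludes them by a genuinely different mechanism: writing $(\pi^*K_X)|_S \equiv (p_g(X)-2)C + \Gamma_S + Z_1 + Z_2$ with $Z_1$ the part of $Z_V$ over $q$, one gets $\left(Z_1 \cdot \left((p_g(X)-2)C + \Gamma_S + Z_2\right)\right) = 0$, contradicting the $1$-connectedness of the nef and big divisor $(\pi^*K_X)|_S$. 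Your proposal contains no substitute for this step. The same issue undermines your chosen order of proof for (2): the claim that ``no such extra $\pi$-exceptional divisor exists over $p$'' is false as stated, since $\pi$-exceptional divisors do lie over $\mathrm{Sing}(X)$ and over whatever $\baselocus|K_X|$ turns out to be, and ruling out the latter away from $\Gamma$ is exactly part (1), which you postpone. (A correct argument in the spirit you sketch does exist: a $\pi|_S$-contracted curve lies in $\supp(E_\pi|_S) = \Gamma_S \cup \supp(E_V)$ because $X$ is terminal, and $\sigma_* E_V = 0$ forces such a curve to be $\sigma$-exceptional, so it cannot be the strict transform of a $\tau_0$-exceptional curve; but even then you would still need the $1$-connectedness argument for (1). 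The paper instead proves (1) first and deduces (2) from Bertini plus the computation $(K_{S_0}\cdot E_p) \ge 2(\Gamma_{S_0}\cdot E_p) > 0$.)

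Part (3) has a second, independent gap: ``the base-point-freeness of $|M|$ descends from $X'$ to $X_\Gamma$'' is circular. Writing $\pi = \pi_\Gamma \circ \mu$, one has $\mu^*(\pi_\Gamma^*K_X - E_\Gamma) = M + (Z - \mu^*E_\Gamma)$, and $|M|$ is the pullback of $|\pi_\Gamma^*K_X - E_\Gamma|$ only if the residual effective part $Z - \mu^*E_\Gamma$ vanishes --- which is essentially the assertion that no blow-ups beyond $\pi_\Gamma$ are needed, i.e.\ the base-point-freeness you are trying to prove. The paper avoids this by arguing directly on $S_\Gamma \cong S_0$: by Lemma \ref{lem: Noether line relation} (3), $M_\Gamma|_{S_\Gamma}$ is numerically $(p_g(X)-2)C_0$, so any base locus of $|M_\Gamma||_{S_\Gamma}$ is vertical, while by (1) it is contained in the horizontal section $E_\Gamma|_{S_\Gamma}$, a contradiction. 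The parts of your plan that do work --- the crepancy of $\tau_0$, the exclusion of $\tau_0$-exceptional curves meeting $\Gamma_{S_0}$ via $K_{S_0} \equiv 2(p_g(X)-2)C_0 + 2\Gamma_{S_0}$, and the deduction that $\Gamma$ avoids $\mathrm{Sing}(X)$ because a singular point of $X$ on a smooth Cartier member is impossible --- are sound, but they do not close the two gaps above.
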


\begin{proof}
	By Lemma \ref{lem: Noether line KX}, $|K_X|$ has no fixed part. Thus $\baselocus|K_X| = \pi(Z \cap S)$. Recall that $Z|_S = \Gamma_S + Z_V$. By Lemma \ref{lem: E0} (2), $\pi(\Gamma_S)=\Gamma$. Thus to prove (1), we only need to show that $\pi(Z_V) \subseteq \Gamma$.
	
	By Lemma \ref{lem: Noether line relation} (2), $\sigma(Z_V)$ consists of finitely many points on $S_0$. By Lemma \ref{lem: Noether line KX}, $S_0$ is the minimal resolution of $S_X$. We deduce that $\pi(Z_V)$ also consists of finitely many points on $S_X$. Suppose that there is a point $p \in \pi(Z_V)$ with $p \notin \Gamma$. Then we may write
	$$
	Z_V = Z_1 + Z_2,
	$$
	where $Z_1$ and $Z_2$ are effective divisors on $S$, $\pi(Z_1) = p$ and $p \notin \pi(Z_2)$. Now by Lemma \ref{lem: Noether line relation} (3) and (4), we know that
	$$
	(\pi^*K_X)|_S = M|_S + Z|_S \equiv (p_g(X)-2)C + \Gamma_S + Z_1 + Z_2.
	$$
	However, since $(Z_1 \cdot C) = 0$ and $Z_1$ does not intersect $Z_2$ or $\Gamma_S$, we deduce that
	$$
	\left(Z_1 \cdot \left( (p_g(X)-2)C + \Gamma_S + Z_2 \right) \right) = 0.
	$$
	This is a contradiction, because $(\pi^*K_X)|_S$ is nef and big, thus $1$-connected. As a result, $\pi(Z_V) \subset \Gamma$ and $\baselocus|K_X| = \Gamma$.
	
	Take any fiber $T$ of $f$. Then $(\Gamma \cdot T) = 1$. Since every irreducible component of $T$ is Cartier by Proposition \ref{prop: Noether1 dim 2}, there is exactly one irreducible component $T_0$ of $T$ such that $\Gamma \cap T_0 \neq \emptyset$. Moreover, $(\Gamma \cdot T_0) = 1$ and $\coeff_{T_0}(T) = 1$. It implies that $T$ is smooth at the point $\Gamma \cap T_0$, so is $X$. Thus (1) is proved.
	
	To prove (2), suppose that $S_X$ is singular. By Bertini's theorem and (1), the singular locus of $S_X$ is contained in $\Gamma$. Let $p \in \Gamma$ be a singularity on $S_X$. By Lemma \ref{lem: Noether line KX}, we have the minimal resolution $\sigma_0: S_0 \to S_X$ such that $K_{S_0} = \sigma_0^*K_{S_X}$. Let $E_p$ be the exceptional divisor on $S_0$ lying over $p$, and let $\Gamma_{S_0} = (\sigma_0)_*^{-1} \Gamma$ be the strict transform of $\Gamma$. Then we have
	$$
	(K_{S_0} \cdot E_p) = 0, \quad (\Gamma_{S_0} \cdot E_p) > 0.
	$$
	On the other hand, by Lemma \ref{lem: Noether line relation} (3), we have
	$$
	(K_{S_0} \cdot E_p) = 2(p_g(X)-2) (C_0 \cdot E_p) + 2(\Gamma_{S_0} \cdot E_p) \ge 2(\Gamma_{S_0} \cdot E_p).
	$$
	This is a contradiction. As a result, $S_X = S_0$ is smooth, and (2) is proved.
	
	For (3), let $\pi_\Gamma: X_\Gamma \to X$ be the blow-up of $X$ along $\Gamma$ with $E_\Gamma$ the $\pi_\Gamma$-exceptional divisor. By (1) and (2), we have
	$$
	K_{X_\Gamma} = \pi_\Gamma^*K_X + E_\Gamma, \quad \pi_\Gamma^*K_X = M_\Gamma + E_\Gamma,
	$$
	where $M_\Gamma = \pi_\Gamma^*K_X - E_\Gamma$. Let $S_\Gamma \in |M_\Gamma|$ be a general member. It suffices to show that the restricted linear system $|M_\Gamma||_{S_\Gamma}$ on $S_\Gamma$ is base point free. Suppose on the contrary that $\baselocus(|M_\Gamma||_{S_\Gamma}) \ne \emptyset$. Since $S_\Gamma$ is the blow-up of $S_X$ along the divisor $\Gamma$, we know that $\pi_\Gamma|_{S_\Gamma}: S_\Gamma \to S_X = S_0$ is an isomorphism. In particular, $S_{\Gamma}$ admits a fibration $S_{\Gamma} \to \PP^1$ and $(\pi_\Gamma|_{S_\Gamma})^*\Gamma = E_\Gamma|_{S_\Gamma}$. Now
	$$
	M_\Gamma|_{S_\Gamma} = (\pi_\Gamma^*K_X)|_{S_\Gamma} - E_\Gamma|_{S_\Gamma} = (\pi_\Gamma|_{S_\Gamma})^*(K_X|_{S_X} - \Gamma).
	$$
	By the adjunction formula and Lemma \ref{lem: Noether line relation} (3), we also have
	$$
	K_X|_{S_X} - \Gamma = \frac{1}{2} K_{S_0} - \Gamma \equiv (p_g(X) - 2)C_0.
	$$
	Thus $\baselocus(|M_\Gamma||_{S_\Gamma})$ is a vertical divisor with respect to the fibration $S_{\Gamma} \to \PP^1$. On the other hand, by (1), $\baselocus(|M_\Gamma||_{S_\Gamma}) = \left(\baselocus|M_\Gamma|\right)\cap {S_\Gamma} \subseteq E_\Gamma|_{S_\Gamma}$. This is a contradiction. As a result, $|M_\Gamma||_{S_\Gamma}$ is base point free, and (3) is proved.
\end{proof}

\subsubsection{The relative canonical model $X_0$} Let
$$
\epsilon: X \to X_0
$$
be the contraction from $X$ onto its relative canonical model $X_0$ over $\PP^1$. Let $f_0: X_0 \to \PP^1$ be the induced fibration. Let $\Gamma_0 = \epsilon_*\Gamma$. Then $\Gamma_0$ is a section of $f_0$. Let
$$
\pi_0: X'_0 \to X_0
$$
be the blow-up along $\Gamma_0$ with the exceptional divisor $E_0$.

\begin{lemma} \label{lem: Noether line KX0}
	We have $\baselocus|K_{X_0}| = \Gamma_0$, and $\Gamma_0$ lies in the smooth locus of $X_0$. Moreover, $|\pi_0^*K_{X_0} - E_0|$ is base point free.
\end{lemma}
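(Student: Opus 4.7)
The plan is to transfer the conclusions of Proposition \ref{prop: Noether line KX} from $X$ to $X_0$ via the contraction $\epsilon$. Since $X_0$ is the relative canonical model of $X$ over $\PP^1$ and $\epsilon$ is a $K_X$-trivial contraction, one has $K_X = \epsilon^* K_{X_0}$ and $\epsilon_*\CO_X = \CO_{X_0}$; in particular $|K_X|$ coincides with $\epsilon^*|K_{X_0}|$. Consequently
\[
\Gamma \;=\; \baselocus|K_X| \;=\; \epsilon^{-1}\!\bigl(\baselocus|K_{X_0}|\bigr).
\]
Since $\epsilon|_\Gamma : \Gamma \to \Gamma_0$ is a morphism between two sections of fibrations over $\PP^1$ and hence an isomorphism, applying $\epsilon$ to the above gives $\baselocus|K_{X_0}| = \Gamma_0$.

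The equality $\epsilon^{-1}(\Gamma_0) = \Gamma$ then forces every fiber of $\epsilon$ over a point of $\Gamma_0$ to be a single point. As $\epsilon(\mathrm{Exc}(\epsilon))$ is closed in $X_0$ and disjoint from $\Gamma_0$, the morphism $\epsilon$ restricts to an isomorphism over some open neighborhood $U_0$ of $\Gamma_0$. Combined with Proposition \ref{prop: Noether line KX}\,(1), this shows $\Gamma_0$ lies in the smooth locus of $X_0$.

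For base point freeness of $|\pi_0^* K_{X_0} - E_0|$, the local isomorphism $\epsilon : \epsilon^{-1}(U_0) \to U_0$ yields a commutative diagram
$$
\xymatrix{
X_\Gamma \ar[d]_{\pi_\Gamma} \ar[r]^{\widetilde\epsilon} & X'_0 \ar[d]^{\pi_0} \\
X \ar[r]_{\epsilon} & X_0
}
$$
in which $\widetilde\epsilon$ is birational, restricts to an isomorphism on a neighborhood of $E_\Gamma$, and satisfies $\widetilde\epsilon^* E_0 = E_\Gamma$. Hence $\widetilde\epsilon^*(\pi_0^*K_{X_0} - E_0) = \pi_\Gamma^* K_X - E_\Gamma$. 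Using the blowup identity $\pi_{0*}\CO_{X'_0}(-E_0) = \mathcal{I}_{\Gamma_0}$ together with $\baselocus|K_{X_0}| = \Gamma_0$, I obtain $H^0(X'_0, \pi_0^*K_{X_0}-E_0) \cong H^0(X_0, K_{X_0})$; the analogous identity holds on $X_\Gamma$, and the four $H^0$-groups are naturally identified, compatibly with pullback along $\widetilde\epsilon$. Given $x \in X'_0$, pick $y \in \widetilde\epsilon^{-1}(x)$; by Proposition \ref{prop: Noether line KX}\,(3) there is a section of $\pi_\Gamma^*K_X - E_\Gamma$ nonvanishing at $y$, and its counterpart on $X'_0$ is nonvanishing at $x$. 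Therefore $|\pi_0^*K_{X_0} - E_0|$ is base point free.

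The crux is the identity $\epsilon^{-1}(\Gamma_0) = \Gamma$: it simultaneously pins down $\baselocus|K_{X_0}|$ and forces $\epsilon$ to be an isomorphism near $\Gamma_0$, making the blowup diagram coherent and permitting the base point freeness already established on $X_\Gamma$ to be transferred to $X'_0$. Beyond this, the remaining work is standard bookkeeping of global sections along birational morphisms with normal targets.
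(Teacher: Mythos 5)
Your proposal is correct and follows essentially the same route as the paper: transfer $\baselocus|K_{X_0}|=\Gamma_0$ via $\epsilon^*K_{X_0}=K_X$, use the fact that fibers of $\epsilon$ over $\Gamma_0$ are points (Zariski's main theorem) to get a local isomorphism near $\Gamma_0$, and then deduce base point freeness of $|\pi_0^*K_{X_0}-E_0|$ from Proposition \ref{prop: Noether line KX} (3). The only difference is that you spell out the blow-up comparison and the identification of global sections that the paper leaves implicit.
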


\begin{proof}
	Since $\epsilon^*K_{X_0} = K_X$, by Proposition \ref{prop: Noether line KX} (1), $\epsilon^{-1}(\baselocus|K_{X_0}|) = \baselocus|K_X|= \Gamma$. Thus $\baselocus|K_{X_0}| = \Gamma_0$. Moreover, $\epsilon^{-1}(\Gamma_0) = \Gamma$. Now $\epsilon|_\Gamma: \Gamma \to \Gamma_0$ is an isomorphism. By Zariski's main theorem and the fact that $\epsilon$ is birational, we deduce that $\epsilon$ is an isomorphism over a neighbourhood of $\Gamma$. Thus by Proposition \ref{prop: Noether line KX} (1) again, $\Gamma_0$ lies in the smooth locus of $X_0$. The rest part of the lemma simply follows from Proposition \ref{prop: Noether line KX} (3).
\end{proof}

Let $M_0 = \pi_0^*K_{X_0} - E_0$. By Lemma \ref{lem: Noether line KX0}, we obtain a morphism
$$
\phi: X'_0 \to \Sigma
$$
induced by the linear system $|M_0|$, and $\phi$ has connected fibers. By the abuse of notation, we still denote by $C$ a general fiber of $\phi$. Then by Lemma \ref{lem: g2}, $g(C) = 2$. By Lemma \ref{lem: E0} (3), we have  $((\pi_0^*K_{X_0}) \cdot C) = 1$. Thus $(E_0 \cdot C) =  ((K_{X'_0} - \pi^*K_{X_0}) \cdot C) = 1$. In particular, $\phi|_{E_0}: E_0 \to \Sigma$ is birational. Let $f'_0 := f_0 \circ \pi_0: X'_0 \to \PP^1$ be the induced fibration with a general fiber $F'_0$. Then there is a natural $\PP^1$-bundle structure $f'_0|_{E_0}: E_0 \to \PP^1$ on $E_0$. Thus we may assume that $E_0$ is isomorphic to the Hirzebruch surface $\FF_e$ for some $e \ge 0$. Since $\deg \Sigma = p_g(X) - 2$, by \cite[Theorem 7]{Nagata}, either $\Sigma \simeq \FF_e$ or $\Sigma$ is a cone over a smooth rational curve of degree $p_g(X) - 2$. Let
$$
r: \FF_e \to \Sigma
$$
be the blow-up of the cone singularity of $\Sigma$ if $\Sigma$ is a cone, or be the identity morphism if $\Sigma$ is smooth. Let $p: \FF_e \to \PP^1$ be the natural projection.

\begin{lemma} \label{lem: Noether line fibration}
	The rational map
	$$
	\varphi := r^{-1} \circ \phi: X'_0 \dashrightarrow \mathbb{F}_e
	$$
	is a morphism. In particular, $f'_0 = p \circ \varphi$.
\end{lemma}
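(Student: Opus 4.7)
If $\Sigma$ is smooth then $r$ is an isomorphism and the lemma is immediate, so I will focus on the case where $\Sigma$ is a cone with vertex $v$ and $r\colon\FF_e\to\Sigma$ is the blow-up of $v$, contracting the $(-e)$-section $\bm{s}$. My plan is to extend the rational map $\varphi$, which is already a morphism on $U:=X'_0\setminus\phi^{-1}(v)$, by analyzing the closure $\bar{\Gamma}$ of its graph inside $X'_0\times\FF_e$ and invoking Zariski's main theorem. The variety $X'_0$ is normal, since by Lemma \ref{lem: Noether line KX0} it is the blow-up of the normal variety $X_0$ along a smooth curve $\Gamma_0$ lying in its smooth locus.

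The key preliminary I will need is the identity $f'_0|_U=p\circ\varphi|_U$. Analogously to Lemma \ref{lem: g2}, the canonical map of the general $(1,2)$-surface fiber $F$ of $f$ factors through the base $\PP^1$ of the corresponding ruling of $\Sigma$, and $p$ is precisely the projection indexing these rulings; hence $\phi$ sends each fiber $F'_0$ of $f'_0$ onto $r(\bm{l}_{f'_0(x)})$ for every $x\in F'_0$. Granted this, I would examine the birational projection $\pi_1\colon\bar{\Gamma}\to X'_0$. Over $x\in U$, its fiber is the single point $(x,\varphi(x))$. Over $x\in\phi^{-1}(v)$, any limit point $(x,y)$ of a sequence $(x_n,\varphi(x_n))$ with $x_n\in U$ will satisfy $r(y)=\phi(x)=v$, so $y\in\bm{s}$, and $p(y)=\lim p(\varphi(x_n))=\lim f'_0(x_n)=f'_0(x)$, so $y\in\bm{l}_{f'_0(x)}$. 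Since $\bm{s}\cdot\bm{l}_{f'_0(x)}=1$, the intersection $\bm{s}\cap\bm{l}_{f'_0(x)}$ is a single point, and the fiber of $\pi_1$ over $x$ is accordingly a single point.

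With $\pi_1$ thus shown to be a proper birational quasi-finite morphism onto a normal variety, Zariski's main theorem will force it to be an isomorphism, so that the composition $\pi_2\circ\pi_1^{-1}\colon X'_0\to\FF_e$ provides the desired morphism $\varphi$, and the identity $p\circ\varphi=f'_0$ extends from $U$ by continuity. The main obstacle I anticipate is the compatibility $f'_0|_U=p\circ\varphi|_U$ itself, namely matching the fibration $f'_0$ with the ruling structure of $\Sigma$ via the canonical map; once it is in hand, the remainder of the argument is essentially formal.
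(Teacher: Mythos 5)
Your argument is correct, but it is genuinely different from the paper's. The paper proves this lemma (via Lemma \ref{lem: psi_0}, which in turn points back to the proof of Lemma \ref{lem: nef Hodge bundle}) by identifying $\varphi$ with the relative canonical map of $X'_0$ over $\PP^1$: one checks that $(f'_0)_*\CO_{X'_0}(M_0)=(f'_0)_*\omega_{X'_0}$, so that $\movable|K_{X'_0}+tF'_0|=|M_0+tF'_0|$ is base point free for $t\gg 0$, whence the relative canonical map to the Hirzebruch surface $\PP((f_0)_*\omega_{X_0})$ is a morphism and $|M_0|$ is the pullback of $|H|$; the resolution $r$ is then the morphism induced by $|H|$, and $\varphi$ is the relative canonical map itself. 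Your route instead extends $\varphi$ across $\phi^{-1}(v)$ by taking the closure of the graph in $X'_0\times\FF_e$ and applying Zariski's main theorem: since $r(y)=v$ forces $y\in\bm{s}$ and $p(y)=f'_0(x)$ forces $y$ onto a single ruling, each fiber of $\pi_1$ is the reduced point $\bm{s}\cap\bm{l}_{f'_0(x)}$, and normality of $X'_0$ (blow-up of $X_0$ along a smooth curve in its smooth locus, by Lemma \ref{lem: Noether line KX0}) finishes the job. Both arguments are sound; yours is more elementary and purely geometric, while the paper's buys the extra identifications $\FF_e\cong\PP((f_0)_*\omega_{X_0})$ and $|M_0|=\varphi^*|\bm{s}+(e+k)\bm{l}|$ that are used immediately afterwards in \eqref{eq: Nother line M0}. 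The one step you rightly flag as needing care, namely $f'_0|_U=p\circ\varphi|_U$, is indeed available: Lemma \ref{lem: g2} (already invoked in this setting just before the lemma) gives a rational map $\Sigma\dashrightarrow\PP^1$ through which $f'_0$ factors via $\phi$, and since the general fiber $F'_0$ is sent by $\phi$ to a line of the minimal-degree scroll $\Sigma$ (as $\deg\Sigma=p_g(X)-2\ge 9$, the only covering family of lines is the ruling), this rational map lifts under $r$ to the projection $p$ up to an automorphism of the target $\PP^1$, which is harmless for your quasi-finiteness argument and for the final identity $f'_0=p\circ\varphi$.
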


\begin{proof}
	The proof is the same as that for Lemma \ref{lem: psi_0}. Thus we leave it to the interested reader.
\end{proof}

\begin{lemma} \label{lem: Noether line flatness}
	The morphism $\varphi$ is flat with all fibers integral.
\end{lemma}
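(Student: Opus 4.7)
The plan is to use miracle flatness and then an intersection-theoretic analysis along $E_0$. The relative canonical model $X_0$ has canonical (hence Cohen-Macaulay) singularities, and by Lemma \ref{lem: Noether line KX0} the section $\Gamma_0$ lies in the smooth locus of $X_0$, so $\pi_0$ is an isomorphism away from $E_0$ and the blow-up of a smooth curve in a smooth threefold near $E_0$; either way $X'_0$ is Cohen-Macaulay. Since $\FF_e$ is regular, flatness of $\varphi$ will follow once every fiber is shown to have dimension one. A preliminary fact I will use is that $\varphi|_{E_0}: E_0 \to \FF_e$ is in fact an \emph{isomorphism}, not merely birational: since $\phi|_{E_0}$ is birational by the discussion preceding the lemma and $\phi = r \circ \varphi$ with $r$ birational, $\varphi|_{E_0}$ is birational; and any non-isomorphism birational morphism between smooth surfaces strictly decreases the Picard rank, while $E_0$ and $\FF_e$ are both Hirzebruch surfaces of Picard rank two.

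To prove equidimensionality, I will assume for contradiction that some fiber $\varphi^{-1}(q)$ contains a prime divisor $T \subset X'_0$; then $T$ is contained in the scheme-theoretic fiber of $f'_0$ over $p(q)$, because $f'_0 = p \circ \varphi$ by Lemma \ref{lem: Noether line fibration}. Writing $M_0 = \varphi^* H_0$ with $H_0$ the divisor on $\FF_e$ pulled back from the hyperplane class on $\Sigma$ via $r$, we immediately get $M_0|_T \equiv 0$. Case (A): if $T \cap E_0 = \emptyset$, then $\pi_0$ identifies $T$ with a divisor $T_{X_0}$ sitting inside the fiber $F_0 := f_0^{-1}(p(q))$; the $f_0$-ampleness of $K_{X_0}$ (relative canonical model) gives $K_{X_0}^2 \cdot T_{X_0} > 0$, whereas the projection formula together with $E_0 \cdot T = 0$ yields $K_{X_0}^2 \cdot T_{X_0} = (\pi_0^* K_{X_0})^2 \cdot T = M_0^2 \cdot T = 0$, a contradiction. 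Case (B): if $T \cap E_0$ is a curve, it must lie in the single ruling fiber $\ell$ of $\pi_0|_{E_0}: E_0 \to \Gamma_0$ over $p(q)$, so $\ell \subset T$; but then the isomorphism $\varphi|_{E_0}$ sends $\ell$ onto the entire fiber $\bm{l} = p^{-1}(p(q))$, a curve, contradicting $\varphi(\ell) \subset \varphi(T) = \{q\}$. Miracle flatness then gives the flatness of $\varphi$.

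For integrality, decompose $\varphi^{-1}(q) = \sum k_i C_i$ with $C_i$ reduced irreducible and $k_i \ge 1$. Flatness together with the Cohen-Macaulayness of $X'_0$ and the regularity of $\FF_e$ forces every fiber to be a Cohen-Macaulay curve, so no embedded points arise. Applying the argument of Case (A) with a curve in place of a divisor, any $C_i$ disjoint from $E_0$ would satisfy $K_{X_0} \cdot \pi_0(C_i) = 0$, contradicting the $f_0$-ampleness of $K_{X_0}$; hence every $C_i$ meets $E_0$. Since $\varphi|_{E_0}$ is an isomorphism, $E_0 \cap \varphi^{-1}(q)$ is a reduced length-one scheme, so $1 = \sum k_i (E_0 \cdot C_i) \ge \sum k_i$, forcing a single component with $k_1 = (E_0 \cdot C_1) = 1$. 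The main obstacle is Case (A) of the equidimensionality step, whose crux is the identity $(\pi_0^* K_{X_0})^2 \cdot T = M_0^2 \cdot T$ obtained from $E_0 \cdot T = 0$ and the vanishing $M_0|_T \equiv 0$, combined with the $f_0$-ampleness of $K_{X_0}$ on fibers.
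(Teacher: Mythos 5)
Your proof is correct and follows essentially the same route as the paper: miracle flatness reduced to equidimensionality, which is ruled out for a two-dimensional fiber component $T$ by computing $(M_0+E_0)^2\cdot T=0$ against the $f_0$-ampleness of $K_{X_0}$, and integrality from a degree-one intersection with the section $E_0$. The only (harmless) variations are that you justify $\varphi|_{E_0}$ being an isomorphism via Picard ranks, and you count components against $E_0\cdot W=1$ where the paper uses $(\pi_0^*K_{X_0})\cdot W=1$ — the same mechanism.
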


\begin{proof}
	Let $W$ be a fiber of $\varphi$. By Lemma \ref{lem: Noether line fibration}, $E_0$ is a section of $\varphi$. Thus $W \cap E_0$ is a point. Suppose that $W$ contains an irreducible component $W_0$ of dimension two. Then $W_0 \cap E_0 = \emptyset$. Thus
	$$
	\left(K_{X_0}^2 \cdot \left((\pi_0)_*W_0\right) \right) = \left((M_0 + E_0)^2 \cdot W_0 \right) = (M_0^2 \cdot W_0) =0.
	$$
	It is clear that $f'_0(W) = p(\varphi(W))$ is a point on $\PP^1$. Since $f'_0 = f_0 \circ \pi_0$, we deduce that $\pi_0(W_0)$ is contained in a fiber of $f_0$. Since $K_{X_0}$ is $f_0$-ample, the above equality implies that $\dim \pi_0(W_0) \le 1$. This is impossible, because the only $\pi_0$-exceptional divisor is $E_0$. As a result, $\dim W = 1$. Thus the fibers of $\varphi$ are equidimensional. By \cite[Theorem 23.1]{Matsumura}, $\varphi$ is flat.
	
	Note that $K_{X_0}$ is Cartier and we have $((\pi_0^*K_{X_0})\cdot W)= 1$. Suppose that $W$ is reducible. Then there is an irreducible component $W_1$ of $W$ such that $((\pi_0^*K_{X_0}) \cdot W_1) = 0$. Thus $(K_{X_0} \cdot ((\pi_0)_* W_1)) = 0$. Similarly as before, since $\pi_0(W_1)$ is contained in a fiber of $f_0$ and $K_{X_0}$ is $f_0$-ample, we deduce that $\pi_0(W_1)$ is a point, which implies that $W_1 \subset E_0$. This is a contradiction. Therefore, $W$ is irreducible. Since 	$((\pi_0^*K_{X_0})\cdot W)= 1$, $W$ is also reduced.
\end{proof}

\begin{lemma} \label{lem: E}
	Let $\CE = \varphi_* \CO_{X'_0}(2E_0)$. Then $\CE$ is a locally free sheaf of rank two on $\mathbb{F}_e$ .
\end{lemma}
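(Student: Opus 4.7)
The plan is to deduce the result from cohomology and base change. Since $\varphi$ is proper and flat by Lemma~\ref{lem: Noether line flatness}, Grauert's theorem reduces us to checking that the function $t \mapsto h^0(W_t, \mathcal{O}_{X'_0}(2E_0)|_{W_t})$ is constantly equal to $2$ on the fibers of $\varphi$; local freeness and rank two then follow immediately.

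The key identification I would establish is that $\mathcal{O}_{X'_0}(2E_0)|_W \cong \omega_W$ for every fiber $W$. Using the blow-up formula $K_{X'_0} = \pi_0^*K_{X_0} + E_0$ together with the fact that $\pi_0^*K_{X_0} - E_0 = M_0$ is pulled back from $\mathbb{F}_e$ via $\varphi$, a short computation gives
$$\omega_{X'_0/\mathbb{F}_e} \cong \mathcal{O}_{X'_0}(2E_0) \otimes \varphi^*\mathcal{L}$$
for some line bundle $\mathcal{L}$ on $\mathbb{F}_e$. Since $X_0$ is a relative canonical model and $\Gamma_0$ lies in its smooth locus by Lemma~\ref{lem: Noether line KX0}, $X'_0$ is Cohen--Macaulay. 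Together with the smoothness of $\mathbb{F}_e$, this guarantees that $\omega_{X'_0/\mathbb{F}_e}$ commutes with base change, so $\omega_W \cong \omega_{X'_0/\mathbb{F}_e}|_W \cong \mathcal{O}_{X'_0}(2E_0)|_W$ (the twist $\varphi^*\mathcal{L}$ becoming trivial on $W$).

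Finally, by Lemma~\ref{lem: Noether line flatness} each $W$ is an integral Cohen--Macaulay curve, and by flatness combined with Lemma~\ref{lem: g2} (applied to the smooth general fiber) one has $p_a(W) = 2$ uniformly. Serre duality on $W$ then yields
$$h^0(W, \mathcal{O}_{X'_0}(2E_0)|_W) = h^0(W, \omega_W) = h^1(W, \mathcal{O}_W) = p_a(W) = 2,$$
and the conclusion follows. The most delicate point is the base-change property of the relative dualizing sheaf at possibly singular fibers of $\varphi$; this rests on the Cohen--Macaulayness of $X'_0$ and the smoothness of the target $\mathbb{F}_e$.
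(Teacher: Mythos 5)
Your proposal is correct and follows essentially the same route as the paper: both arguments identify $\CO_{X'_0}(2E_0)|_W$ with the dualizing sheaf of the fiber $W$ (using $K_{X'_0} = M_0 + 2E_0$ with $M_0$ pulled back from $\FF_e$, together with the Cohen--Macaulayness of $X'_0$ and invertibility of its dualizing sheaf), then compute $h^0(W,\omega_W) = h^1(W,\CO_W) = p_a(W) = 2$ by Serre duality and the constancy of arithmetic genus along the flat family of integral fibers from Lemma \ref{lem: Noether line flatness}, and conclude by Grauert's theorem. The only cosmetic difference is that the paper writes $\omega_W = \omega_{X'_0}|_W$ directly rather than passing through $\omega_{X'_0/\FF_e}$.
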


\begin{proof}
Take any fiber $C$ of $\varphi$. Since a general fiber of $\varphi$ is of genus $2$, by Lemma \ref{lem: Noether line flatness}, it follows that $C$ is an integral curve of arithmetic genus $2$. We deduce that $h^1(C,\CO_C)=2$. By Theorem \ref{thm: Noether lines} (1)  and Lemma \ref{lem: Noether line KX0}, $X'_0$ is Cohen-Macaulay and the dualizing sheaf $\omega_{X'_0}$ is invertible. Since $C$ is a fiber of $\varphi$ and $\Sigma$ is smooth, $C$ is Cohen-Macaulay and the dualizing sheaf $\omega_C=\omega_{X'_0}|_C$ is invertible. We deduce that 
$$
h^0(C, K_C)=h^0(C, \omega_C)=h^1(C, \mathcal{O}_C)=2,
$$
where the last equality follows from the Serre duality. On the other hand, note that $K_{X'_0} = \pi_0^*K_{X_0} + E_0 = M_0  + 2E_0$. Thus $h^0(C, 2E_0|_C) = h^0(C, K_{X'_0}|_C) =h^0(C, K_C)= 2$. By Grauert's theorem \cite[III, Corollary 12.9]{Hartshorne}, the result follows.
\end{proof}

\subsubsection{Explicit description of $X'_0$}
Let $Y = \PP(\CE)$ be the $\PP^1$-bundle over $\FF_e$, and denote by $q: Y \to \FF_e$ the natural projection. Since all fibers of $\varphi$ are integral, by \cite[Theorem 3.3]{Catanese_Franciosi_Hulek_Reid}, $|K_W| = |2E_0|_W|$ is base point free for any fiber $W$ of $\varphi$. Thus we obtain a morphism
$$
\rho: X'_0 \to Y
$$
which is just the relative canonical map of $X'_0$ over $\mathbb{F}_e$. Since a general fiber $C$ of $\varphi$ is of genus $2$, we deduce that $\rho$ is a finite morphism of degree two. Let $E_Y  = \rho(E_0)$. Then $E_Y$ is a section of $q$. Thus we have the following commutative diagram
$$
\xymatrix{
	X'_0 \ar[drr]_{\varphi} \ar[d]^{\pi_0}  \ar[rr]^{\rho} & &  Y \ar[d]_{q} & & \\
	X_0 \ar@/_1 pc/[rrrr]_{f_0} & & \mathbb{F}_e \ar[rr]^{p} \ar@/_1 pc/[u]_j & & \PP^1
}
$$
where $j: \mathbb{F}_e \to Y$ corresponds to the section $E_Y$. Since $\rho^*E_Y = 2E_0$, we have $\CE = \varphi_*(\rho^*\CO_Y(E_Y)) = q_*\CO_Y(E_Y)$. Thus $E_Y$ is just the relative hyperplane section of $Y$.

From now on, by the abuse of notation, we identify $E_0$ and $E_Y$ with $\FF_e$ under the isomorphism $\varphi|_{E_0}$ and $q|_{E_Y}$, respectively. Denote by $\bm{l}$ a ruling on $\FF_e$ and by $\bm{s}$ the section on $\FF_e$ with $\bm{s}^2 = -e$. Recall that $r: \FF_e \to \Sigma$ is induced by the linear system $|\bm{s} + (e + k)\bm{l}|$ on $\FF_e$ for some $k \ge 0$ with $e + 2k = p_g(X) - 2$. Thus we have
\begin{equation} \label{eq: Nother line M0}
	M_0 = \varphi^*\left(\bm{s} + (e + k)\bm{l}\right),
\end{equation}
In particular,
\begin{equation} \label{eq: Noether line M0E0}
	M_0|_{E_0} = \bm{s} + (e + k)\bm{l}.
\end{equation}
By Lemma \ref{lem: Noether line relation} (1), we have
\begin{equation} \label{eq: Noether line KX0Gamma0}
	(K_{X_0} \cdot \Gamma_0) = (K_X \cdot \Gamma) = \frac{p_g(X) - 4}{3} = \frac{e+2k-2}{3}.
\end{equation}
Thus $(\pi_0^*K_{X_0})|_{E_0} \sim \frac{e+2k-2}{3} \bm{l}$. It follows that
\begin{equation} \label{eq: Noether line E0E0}
	E_0|_{E_0} = (\pi_0^*K_{X_0} - M_0)|_{E_0} \sim -\bm{s} - \frac{2e + k + 2}{3}\bm{l}.
\end{equation}
For any fiber $C$ of $\varphi$, $\varphi|_C: C\to \varphi(C)\cong\PP^1$ is just the canonical map of $C$. Note that $2E_0|_C\in |K_C|$. Thus we have $\rho^*E_Y = 2E_0$ and $\rho|_{E_0}: E_0 \to E_Y$ is an isomorphism. We deduce that
\begin{equation} \label{eq: Noether line EYEY}
	E_Y|_{E_Y} \sim -2\bm{s} - \frac{4e + 2k + 4}{3}\bm{l}.
\end{equation}

Push forward by $q$ on the following short exact sequence
$$
0 \to \CO_Y \to \CO_Y(E_Y) \to \CO_{E_Y}(E_Y) \to 0.
$$
Since $R^1 q_*\CO_Y = 0$, we obtain
\begin{equation} \label{eq: Noether line ses}
	0 \to \CO_{\FF_e} \to \CE \to j^* \CO_{E_Y}(E_Y) \to 0.
\end{equation}
Since $j: \FF_e \to E_Y$ is an isomorphism, by \eqref{eq: Noether line EYEY}, we know that
\begin{equation}
	\det \CE =  j^* \CO_{E_Y}(E_Y) = \CO_{\FF_e} \left(-2\bm{s} - \frac{4e + 2k + 4}{3}\bm{l}\right).
\end{equation}
Furthermore, since $K_{\FF_e} = -2\bm{s} - (e+2)\bm{l}$, we deduce that
\begin{equation} \label{eq: Noether line KY}
	K_Y \sim -2E_Y - q^*\left( 4 \bm{s} + \frac{7e + 2k + 10}{3}\bm{l} \right).
\end{equation}

Let $B$ be the branch locus of $\rho$. Since $X'_0$ has at worst canonical singularities, $B$ is a reduced divisor on $Y$. Note that $E_Y$ is contained in $B$. Thus we may write
$$
B = E_Y + B',
$$
where $E_Y \nsubseteq B'$. Since $\rho$ is a double cover, there is a divisor $L$ on $Y$ such that $B \sim 2L$.

\begin{lemma} \label{lem: Noether line branch locus}
	We have
	\begin{itemize}
		\item [(1)] $\displaystyle{L \sim 3E_Y + 5q^*\left(\bm{s} + \frac{2e + k + 2}{3} \bm{l} \right)}$;
		\item [(2)] $\displaystyle{B' \sim 5E_Y + 10q^*\left(\bm{s} + \frac{2e + k + 2}{3} \bm{l} \right)}$;
		\item [(3)] $B' \cap E_Y = \emptyset$.
	\end{itemize}
\end{lemma}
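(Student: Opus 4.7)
My plan is to deduce (1) from the Hurwitz formula for the double cover $\rho$, derive (2) algebraically from (1), and establish (3) by a short computation on $E_Y \cong \FF_e$.

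For (1), I would apply the Hurwitz formula: since $\rho$ is a flat double cover with $B \sim 2L$ onto the smooth variety $Y$, and $X'_0$ is Cohen--Macaulay (it has canonical singularities, being a blow-up of $X_0$ along a smooth curve in the smooth locus), we have $K_{X'_0} \sim \rho^*(K_Y + L)$. On the other hand, since $\pi_0$ is the blow-up of $X_0$ along $\Gamma_0$ (which lies in the smooth locus of $X_0$ by Lemma \ref{lem: Noether line KX0}), $K_{X'_0} = \pi_0^*K_{X_0} + E_0 = M_0 + 2E_0$. Combining \eqref{eq: Nother line M0} with the identity $\rho^*E_Y = 2E_0$ rewrites this as $K_{X'_0} \sim \rho^*\bigl(E_Y + q^*(\bm{s} + (e+k)\bm{l})\bigr)$. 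Comparing both expressions and invoking injectivity of $\rho^*: \Pic(Y) \to \Pic(X'_0)$ (which holds because $\Pic(Y) \cong \Pic(\FF_e) \oplus \ZZ \cdot E_Y$ is torsion-free, numerical and linear equivalence coincide on the rational variety $Y$, and any $N \in \ker \rho^*$ satisfies $2(N \cdot \gamma) = (\rho^*N \cdot \rho^*\gamma) = 0$ for every curve $\gamma$ in $Y$), we obtain $K_Y + L \sim E_Y + q^*(\bm{s} + (e+k)\bm{l})$. Substituting \eqref{eq: Noether line KY} and simplifying yields (1), and then (2) follows immediately from $B' = 2L - E_Y$.

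For (3), I would compute $B'|_{E_Y}$ as a divisor class on $E_Y \cong \FF_e$. Since $q \circ j = \mathrm{id}_{\FF_e}$, we have $q^*\bm{s}|_{E_Y} = \bm{s}$ and $q^*\bm{l}|_{E_Y} = \bm{l}$; combined with \eqref{eq: Noether line EYEY} and the formula in (2), this gives
\[
B'|_{E_Y} \sim 5\left(-2\bm{s} - \tfrac{4e+2k+4}{3}\bm{l}\right) + 10\left(\bm{s} + \tfrac{2e+k+2}{3}\bm{l}\right) = 0.
\]
Since $\Pic(\FF_e)$ is torsion-free and $B'|_{E_Y}$ is effective, it must vanish identically, so $B' \cap E_Y = \emptyset$. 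The main subtlety lies in step (1): one must recover $L$ itself, not merely $\rho^*L$, which is why the injectivity of $\rho^*$ is crucial. Once that is in place, the remainder is routine intersection-theoretic bookkeeping on a Hirzebruch surface.
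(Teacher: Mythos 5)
Your proposal is correct and follows essentially the same route as the paper: Hurwitz's formula for $\rho$ combined with $K_{X'_0} = M_0 + 2E_0$ and $\rho^*E_Y = 2E_0$ gives $\rho^*L \sim \rho^*\bigl(3E_Y + 5q^*(\bm{s} + \frac{2e+k+2}{3}\bm{l})\bigr)$, injectivity of $\rho^*$ on the torsion-free $\Pic(Y)$ yields (1), and (2), (3) follow by the same bookkeeping with \eqref{eq: Noether line EYEY}. Your extra justification of the injectivity of $\rho^*$ is a welcome elaboration of the paper's one-line appeal to torsion-freeness, but it is not a different argument.
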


\begin{proof}
	By \eqref{eq: Nother line M0}, we have $K_{X'_0} = \varphi^*(\bm{s} + (e+k)\bm{l}) + 2E_0$. Note that $\rho^*E_Y = 2E_0$. From \eqref{eq: Noether line KY}, we deduce that
	$$
	\rho^*L \sim K_{X'_0} - \rho^*K_Y \sim \rho^*\left(3E_Y + 5q^*\left(\bm{s} + \frac{2e + k + 2}{3} \bm{l} \right) \right).
	$$
	Since $\Pic(Y)$ is torsion-free, we obtain the linear equivalence in (1). Since $B' \sim 2L - E_Y$, it is clear that (2) just follows from (1). Finally, by (2) and \eqref{eq: Noether line EYEY}, we deduce that $B'|_{E_Y} \sim 0$. Since $E_Y \nsubseteq B'$, (3) follows.
\end{proof}

\begin{lemma} \label{lem: Noether line split}
	The short exact sequence \eqref{eq: Noether line ses} splits. As a result,
	$$
	\CE = \CO_{\FF_e} \oplus \CO_{\FF_e} \left(-2\bm{s} - \frac{4e + 2k + 4}{3}\bm{l}\right).
	$$
\end{lemma}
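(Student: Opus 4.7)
The splitting of \eqref{eq: Noether line ses} is equivalent to the existence of a second section $E_Y'$ of the $\PP^1$-bundle $q : Y = \PP(\CE) \to \FF_e$ disjoint from $E_Y$: given such an $E_Y'$, the two sections correspond to two distinct sub-line-bundles of $\CE$ whose fibers are transverse at every point, so their sum is an isomorphism $\CO_{\FF_e} \oplus \CL \xrightarrow{\sim} \CE$; comparing determinants forces $\CL \cong \det \CE \cong \CO_{\FF_e}(-2\bm{s} - \tfrac{4e + 2k + 4}{3}\bm{l})$, which gives the second assertion of the lemma. So the task reduces to producing $E_Y'$.

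The key geometric input is the divisor $B'$ from Lemma \ref{lem: Noether line branch locus}. Since $B' \cap E_Y = \emptyset$, $B'$ lies entirely in the $\mathbb{A}^1$-bundle $Y \setminus E_Y$; and since $B' \sim 5 E_Y + 10 q^*(\cdots)$ while any fiber $F$ of $q$ meets $E_Y$, the disjointness also forces $B'$ to contain no fiber of $q$. Thus for every $p \in \FF_e$, the fiber restriction $B' \cap q^{-1}(p)$ is a degree-$5$ effective divisor supported in the affine line $q^{-1}(p) \setminus E_Y$. I will define $E_Y'(p)$ to be the \emph{affine centroid} of these five points: on a trivialization $Y|_U \cong U \times \PP^1$ of $q$ with $E_Y|_U = U \times \{\infty\}$ and affine coordinate $t$, the ideal of $B'|_U$ is generated by a polynomial $f(t) = a_5 t^5 + a_4 t^4 + \cdots \in \CO(U)[t]$ with $a_5 \in \CO(U)^\times$, and $E_Y'|_U$ is the graph of the regular function $-a_4/(5 a_5)$.

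Because the transition functions of the $\mathbb{A}^1$-bundle $Y \setminus E_Y \to \FF_e$ preserve the infinity section $E_Y$, they reduce to the affine group of $\mathbb{A}^1$; a direct polynomial computation shows that the centroid of a degree-$5$ divisor is equivariant under any affine change of coordinates $t \mapsto \lambda t + c$, hence the local sections glue to a global algebraic section $E_Y' \colon \FF_e \to Y \setminus E_Y$ of $q$, completing the argument. The main (really, the only) technical obstacle I foresee is verifying this equivariance rigorously — a routine but essential check. It is worth remarking that a purely cohomological route, namely trying to show $\mathrm{Ext}^1(\det\CE, \CO_{\FF_e}) = H^1\!\left(\FF_e, \CO(2\bm{s} + \tfrac{4e + 2k + 4}{3}\bm{l})\right) = 0$, will not suffice in general: a Leray/Serre-duality computation shows this group can be nonzero for large $e$ (for instance when $\Sigma$ is a cone), so the geometric information carried by $B'$ is genuinely used.
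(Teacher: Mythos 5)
Your argument is correct. Note that the paper does not actually write out a proof here: it defers to \cite[Lemma 5.8]{Hu_Zhang}, so a line-by-line comparison is not possible from this text alone; but your route --- producing a second section of $q$ disjoint from $E_Y$ by taking the fiberwise barycenter of the quintic divisor $B'$, i.e.\ the classical Tschirnhaus normalization killing the $t^4$-coefficient --- is exactly the mechanism by which disjointness of the quintic from the infinity section is traditionally converted into a splitting in the theory of genus-$2$ fibrations, and it uses precisely the two facts supplied by Lemma \ref{lem: Noether line branch locus} (the class of $B'$, giving fiber degree $5$, and $B' \cap E_Y = \emptyset$, giving both the unit leading coefficient and the absence of fibral components). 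All the steps check out: the transition functions of $(Y, E_Y)$ restricted to $Y \setminus E_Y$ are indeed affine, the centroid $-a_4/(5a_5)$ is invariant under rescaling the local equation by a unit and transforms affinely under $t \mapsto \lambda t + c$, and we are in characteristic $0$ so dividing by $5$ is harmless. One small point worth making explicit in a final write-up: a section disjoint from $E_Y$ gives a sub-line-bundle $\CK' \subset \CE$ everywhere transverse to the sub $\CO_{\FF_e}$ of \eqref{eq: Noether line ses}, and the composite $\CK' \hookrightarrow \CE \twoheadrightarrow j^*\CO_{E_Y}(E_Y)$ is then an isomorphism of line bundles; this is what shows you split the \emph{given} sequence rather than merely decomposing $\CE$. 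Your closing remark is also correct and worth keeping: since $\det\CE^{\vee} = \CO_{\FF_e}(2\bm{s} + \frac{4e+2k+4}{3}\bm{l})$ pushes down to $\CO(N) \oplus \CO(N-e) \oplus \CO(N-2e)$ with $N - 2e = -2m+4 \le -6$ when $\Sigma$ is a cone, the group $\mathrm{Ext}^1(\det\CE, \CO_{\FF_e})$ is genuinely nonzero there, so the geometric input from the branch locus cannot be replaced by a vanishing argument.
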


\begin{proof}
	The proof is identical to that of \cite[Lemma 5.8]{Hu_Zhang}. Thus we leave it to the interested reader.
\end{proof}

\begin{lemma} \label{lem: Noether line Hodge bundle}
	We have
	$$
	f_* \omega_X = (f_0)_* \omega_{X_0} = \CO_{\PP^1} (k) \oplus \CO_{\PP^1}(e+k).
	$$
	In particular, $\varphi$ coincides with the relative canonical map of $X_0$ over $\PP^1$.
\end{lemma}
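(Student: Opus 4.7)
The plan is to reduce to computing $f_*\omega_X$ and then pin down its splitting type by combining a dimension count with the identification of Hirzebruch surfaces coming from the proof of Lemma \ref{lem: nef Hodge bundle}.

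Since $\epsilon : X \to X_0$ is a crepant birational contraction between threefolds with canonical singularities (it is the relative canonical model map over $\PP^1$), one has $\epsilon_*\omega_X = \omega_{X_0}$, and pushing forward along $f_0$ yields $f_*\omega_X = (f_0)_*\omega_{X_0}$. So it suffices to determine $f_*\omega_X$. By Lemma \ref{lem: nef Hodge bundle} one can write
$$
f_*\omega_X \cong \CO_{\PP^1}(a) \oplus \CO_{\PP^1}(b), \qquad a \ge b \ge 0.
$$
Two constraints determine $a$ and $b$. First, Lemma \ref{lem: H1H2} together with the Leray spectral sequence gives
$$
a+b+2 = h^0(\PP^1, f_*\omega_X) = p_g(X) = e+2k+2,
$$
so $a+b = e+2k$. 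Second, the proof of Lemma \ref{lem: nef Hodge bundle} shows that $V := \mathbb{P}(f_*\omega_X)$ maps to $\Sigma$ via a base-point-free linear system $|H|$ which is either an isomorphism (when $\Sigma$ is smooth) or contracts the negative section (when $\Sigma$ is a cone). In either case, $V$ is the minimal resolution of $\Sigma$, and by construction so is the $\FF_e$ of the present setup; consequently $V \cong \FF_e$, which forces $a-b = e$. Combining the two equations yields $a = e+k$ and $b=k$, proving the first assertion.

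For the \emph{in particular} statement, I would identify $\varphi : X'_0 \to \FF_e$ with the resolution of the relative canonical map of $X_0$ over $\PP^1$. The latter is the rational map to $\mathbb{P}((f_0)_*\omega_{X_0}) \cong \FF_e$ induced by the natural surjection $f_0^*(f_0)_*\omega_{X_0} \twoheadrightarrow \omega_{X_0}/\text{torsion}$; on a general fiber $F_0$ it restricts to the canonical map of the $(1,2)$-surface $F_0$, whose unique base point lies on $\Gamma_0$. Hence the indeterminacy locus coincides with $\baselocus|K_{X_0}| = \Gamma_0$ from Lemma \ref{lem: Noether line KX0}, and blowing up $\Gamma_0$ via $\pi_0$ resolves it to a morphism $X'_0 \to \FF_e$ defined by the movable linear system $|\pi_0^*K_{X_0} - E_0| = |M_0|$. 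But $\varphi = r^{-1}\circ\phi_{|M_0|}$ is also defined by $|M_0|$ and lands in the same $\FF_e$, so the two morphisms agree.

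The main delicate point is matching the two occurrences of $\FF_e$: the Hirzebruch invariant $a-b$ of $V = \mathbb{P}(f_*\omega_X)$ must coincide with the invariant $e$ attached to the $\PP^1$-bundle $f'_0|_{E_0} : E_0 \to \PP^1$. The cleanest way to settle this is to observe that both surfaces are minimal resolutions of the same $\Sigma$, hence canonically isomorphic; this is automatic when $\Sigma$ is smooth, and requires a short argument using the contraction of the unique negative section when $\Sigma$ is a cone.
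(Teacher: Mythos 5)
Your proof is correct, but it takes a genuinely different route from the paper's. The paper reads off the splitting type from the double-cover structure it has already built: it writes $\rho_*\omega_{X'_0}=\omega_Y\oplus\omega_Y(L)$, feeds in the branch-locus class from Lemma \ref{lem: Noether line branch locus} and the splitting of $\CE$ from Lemma \ref{lem: Noether line split}, and pushes forward along $q$ and then $p$ to get $\CO_{\PP^1}(k)\oplus\CO_{\PP^1}(e+k)$ by direct computation. You instead pin down the two integers $a\ge b\ge 0$ by two soft constraints: $a+b=p_g(X)-2=e+2k$ from the global-section count, and $a-b=e$ from the fact that $V=\PP(f_*\omega_X)$ and $E_0\cong\FF_e$ are both the minimal resolution of the minimal-degree surface $\Sigma$ (an isomorphism onto $\Sigma$ when $b>0$; when $b=0$ the relative hyperplane class $\bm{s}+a\bm{l}$ contracts exactly the negative section, $\Sigma$ is a cone, and uniqueness of the minimal resolution forces $k=0$, so the formulas $a=e+k$, $b=k$ persist). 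This is sound, and it is more elementary in that it bypasses Lemmas \ref{lem: Noether line branch locus} and \ref{lem: Noether line split} entirely, using only the general setup, the nefness of $f_*\omega_X$, and the identification of $E_0$ with the resolution of $\Sigma$; the trade-off is that the paper's route costs nothing extra (those lemmas are needed anyway for the classification theorem) and yields $\varphi_*\omega_{X'_0}$ explicitly as a by-product. Your treatment of the ``in particular'' clause — identifying $\varphi$ with the morphism defined by $|M_0|=|\pi_0^*K_{X_0}-E_0|$ and matching it with the resolved relative canonical map after the identification $\FF_e=\PP((f_0)_*\omega_{X_0})$ — is a fleshed-out version of the paper's one-line remark and is fine.
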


\begin{proof}
	By \eqref{eq: Noether line KY} and Lemma \ref{lem: Noether line branch locus}, we have
	$$
	\rho_* \omega_{X'_0} = \omega_Y \oplus \omega_Y(L) = \omega_Y \oplus \CO_Y \left(E_Y + q^*\left(\bm{s} + (e + k)\bm{l}\right)\right).
	$$
	Since $q_* \omega_Y = 0$ and $\CE = q_*\CO_Y(E_Y)$, by Lemma \ref{lem: Noether line split} and the projection formula, we deduce that
	$$
	\varphi_* \omega_{X'_0} = \CO_{\FF_e} \left(\bm{s} + (e + k) \bm{l}\right) \oplus \CO_{\FF_e} \left(-\bm{s} - \frac{e - k + 4}{3}\bm{l}\right).
	$$
	Thus it follows that
	$$
	(f_0)_* \omega_{X_0} = (f'_0)_* \omega_{X'_0} = p_*\CO_{\FF_e} \left(\bm{s} + (e + k)\bm{l}\right) = \CO_{\PP^1}(k) \oplus \CO_{\PP^1}(e + k).
	$$
	Note that $\FF_e = \PP((f_0)_*\omega_{X_0})$. Thus $\varphi$ coincides with the relative canonical map of $X_0$ over $\PP^1$.
\end{proof}

\begin{lemma} \label{lem: Noether line KX0 ample}
	If $p_g(X) \ge 23$, then  $K_{X_0}$ is ample.
\end{lemma}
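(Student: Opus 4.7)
The plan is to show that $K_{X_0}$ is nef and that $(K_{X_0} \cdot C) > 0$ for every irreducible curve $C \subset X_0$; combined with $K_{X_0}^3 = K_X^3 > 0$, this will force ampleness via the Kawamata--Shokurov base-point-free theorem. Indeed, the latter produces a semi-ample morphism $g \colon X_0 \to Z$ defined by some $|mK_{X_0}|$, which is birational because $K_{X_0}$ is big and which contracts precisely the curves of $K_{X_0}$-degree zero; the absence of such curves will make $g$ an isomorphism and give $K_{X_0}$ ample. Nefness is essentially automatic: the contraction $\epsilon \colon X \to X_0$ is crepant, so $\epsilon^*K_{X_0} = K_X$ is nef, and $K_{X_0}$ is then nef by the projection formula. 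Moreover $K_{X_0}$ is $f_0$-ample by the very definition of the relative canonical model.

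The key step is thus the strict positivity $(K_{X_0} \cdot C) > 0$ for every irreducible $C \subset X_0$. Curves contained in a fiber of $f_0$ are handled by $f_0$-ampleness, and $C = \Gamma_0$ is handled by \eqref{eq: Noether line KX0Gamma0}, which gives $(K_{X_0} \cdot \Gamma_0) = \frac{p_g(X)-4}{3} > 0$. For a horizontal curve $C \neq \Gamma_0$, let $\tilde{C} \subset X'_0$ be its strict transform. Since $C$ is irreducible and distinct from the blow-up centre $\Gamma_0 = \pi_0(E_0)$, we have $\tilde{C} \not\subset E_0$, so both $(E_0 \cdot \tilde{C}) \ge 0$ and $(M_0 \cdot \tilde{C}) \ge 0$ (the latter by the base-point-freeness of $|M_0|$). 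The projection formula then gives
$$
(K_{X_0} \cdot C) = (\pi_0^*K_{X_0} \cdot \tilde{C}) = (M_0 \cdot \tilde{C}) + (E_0 \cdot \tilde{C}) \ge 0,
$$
and $(K_{X_0} \cdot C) = 0$ would force $(M_0 \cdot \tilde{C}) = 0$.

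This is where the hypothesis $p_g(X) \ge 23$ enters, and I expect it to be the main point. By Theorem \ref{thm: small volume}(2) --- equivalently, by combining Lemmas \ref{lem: nef Hodge bundle} and \ref{lem: Noether line Hodge bundle} with Proposition \ref{prop: 2K-F} --- the canonical image $\Sigma \cong \FF_e$ is not a cone, hence $k \ge 1$ and $\bm{s} + (e+k)\bm{l}$ is ample on $\FF_e$. Since $M_0 = \varphi^*(\bm{s} + (e+k)\bm{l})$ by \eqref{eq: Nother line M0}, the vanishing $(M_0 \cdot \tilde{C}) = ((\bm{s}+(e+k)\bm{l}) \cdot \varphi_*\tilde{C}) = 0$ forces $\varphi(\tilde{C})$ to be a point, whence $f'_0(\tilde{C}) = p(\varphi(\tilde{C}))$ is a point as well, contradicting the horizontality of $C$. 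Hence $(K_{X_0} \cdot C) > 0$ in all cases, and the argument is complete. The pathology that $p_g(X) \ge 23$ rules out is exactly $k = 0$: in that case $\bm{s}$ is contracted by $r$, and horizontal curves inside $\varphi^{-1}(\bm{s})$ could have trivial $M_0$-intersection, so one cannot conclude strict positivity from $\pi_0^*K_{X_0} = M_0 + E_0$ alone.
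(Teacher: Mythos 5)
Your proposal is correct, but it reaches ampleness by a genuinely different mechanism than the paper. You prove strict positivity $(K_{X_0} \cdot C) > 0$ for every irreducible curve $C$ (vertical curves via $f_0$-ampleness, $\Gamma_0$ via \eqref{eq: Noether line KX0Gamma0}, and the remaining horizontal curves via the decomposition $\pi_0^*K_{X_0} = M_0 + E_0$ together with the ampleness of $\bm{s} + (e+k)\bm{l}$ once $k \ge 1$), and then upgrade ``nef, big, and no $K$-trivial curves'' to ampleness through the base-point-free theorem and Zariski's main theorem. The paper instead proves the stronger statement that $K_{X_0} - F_0$ is nef, by writing $\pi_0^*(K_{X_0}-F_0) = \varphi^*\left(\bm{s} + (e+k-1)\bm{l}\right) + E_0$ and checking that the restriction to $E_0$, namely $\frac{e+2k-5}{3}\bm{l}$, is nef; ampleness is then immediate from the convex combination $K_{X_0} = \frac{t}{t+1}(K_{X_0}-F_0) + \frac{1}{t+1}(K_{X_0}+tF_0)$, where $K_{X_0}+tF_0$ is ample for $t \gg 0$ by relative ampleness. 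Both arguments hinge on exactly the same use of the hypothesis --- $p_g(X) \ge 23$ forces $\Sigma$ to be smooth, hence $k \ge 1$ --- and you identified correctly that $k=0$ is the pathology being excluded. What the paper's route buys is the avoidance of the base-point-free machinery and, as a by-product, the nefness of $K_{X_0}-F_0$ itself; what your route buys is that it works verbatim without singling out the fiber class. Your version does rely on two standard but unstated facts: that the image $Z$ of the semi-ample fibration is normal (so that the finite birational morphism $g$ is an isomorphism), and that $g$ contracts precisely the $K_{X_0}$-trivial curves; neither causes any difficulty.
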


\begin{proof}
	By Theorem \ref{thm: small volume} (2), the canonical image $\Sigma \simeq \FF_e$ is smooth. Thus by Lemma \ref{lem: nef Hodge bundle} and \ref{lem: Noether line Hodge bundle}, $k\ge 1$. Let $F_0$ be a general fiber of $f_0$. We have
	$$
	\pi_0^*(K_{X_0}-F_0) = M_0 + E_0 - \pi_0^*F_0 = \varphi^*\left( \bm{s} + (e+k-1) \bm{l} \right) + E_0.
	$$
	By \eqref{eq: Noether line KX0Gamma0}, we have
	$$
	\left( \pi_0^*(K_{X_0}-F_0) \right)|_{E_0} = \frac{e+2k-5}{3} \bm{l}.
	$$
	Since $e + 2k = p_g(X) - 2 \ge 21$, we deduce that $(\pi_0^*(K_{X_0}-F_0))|_{E_0}$ is nef. Note that $\varphi^*(\bm{s}+(e+k-1)\bm{l})$ is nef and $E_0$ is irreducible. We conclude that $K_{X_0}-F_0$ is nef. On the other hand, since $K_{X_0}$ is $f_0$-ample, $K_{X_0}+tF_0$ is ample for a sufficiently large integer $t$. Thus $K_{X_0} = \frac{t}{t+1}(K_{X_0}-F_0) + \frac{1}{t+1} (K_{X_0}+tF_0)$ is ample. The proof is completed.
\end{proof}

Now we are ready to state the classification theorem for $3$-folds on the (first) Noether line.

\begin{theorem} \label{thm: Noether line description}
	Let $X$ be a minimal $3$-fold of general type on the Noether line with $p_g(X) = 3m - 2 \ge 11$ and with a fibration $f: X \to \PP^1$ such that a general fiber of $f$ is a $(1, 2)$-surface. Let $X_0$ be the relative canonical model of $X$ over $\PP^1$ with the fibration $f_0: X_0 \to \PP^1$. Then $\baselocus|K_{X_0}| = \Gamma_0$ is a section of $f_0$.
	Let $\pi_0: X'_0 \to X_0$ be the blow-up along $\Gamma_0$. Then the induced fibration $f'_0: X'_0 \to \PP^1$ is factorized as
	$$
	f'_0: X'_0 \stackrel{\rho} \longrightarrow Y \stackrel{q} \longrightarrow \FF_e \stackrel{p}  \longrightarrow \PP^1
	$$
	with the following properties:
	\begin{itemize}
		\item [(i)] the Hirzebruch surface $\FF_e$ is isomorphic to $\PP(f_* \omega_X)$;
		\item [(ii)] $q: Y = \PP(\CO_{\FF_e} \oplus \CO_{\FF_e} (-2\bm{s} - (m + e)\bm{l})) \to \FF_e $ is a $\PP^1$-bundle, where $\bm{s}$ is a section on $\FF_e$ with $\bm{s}^2 = -e$ and $\bm{l}$ is a ruling on $\FF_e$;
		\item [(iii)] $\rho: X'_0 \to Y$ is a flat double cover with the branch locus $B = B_1 + B_2$, where $B_1$ is the relative hyperplane section of $Y$, $B_2 \sim 5B_1 + 5(m+e)q^*\bm{l} + 10 q^*\bm{s}$ and $B_1 \cap B_2 = \emptyset$.
	\end{itemize}
    Moreover, if $p_g(X) \ge 23$, then $X_0$ is the canonical model of $X$.
\end{theorem}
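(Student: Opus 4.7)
The plan is to assemble this theorem as a direct synthesis of the preceding lemmas in the subsection, since essentially every piece has been prepared. First I would note that Lemma \ref{lem: Noether line KX0} already gives $\baselocus|K_{X_0}| = \Gamma_0$; combined with the observation in \S \ref{subsection: general setting} that $\Gamma$ descends to $\Gamma_0$ under the birational morphism $\epsilon \colon X \to X_0$, this shows $\Gamma_0$ is a section of $f_0$. After the blow-up $\pi_0 \colon X'_0 \to X_0$ along $\Gamma_0$, Lemma \ref{lem: Noether line fibration} provides the morphism $\varphi \colon X'_0 \to \FF_e$ factoring $f'_0$, and Lemma \ref{lem: Noether line Hodge bundle} identifies $\FF_e$ with $\PP((f_0)_*\omega_{X_0}) = \PP(f_*\omega_X)$, which is property (i).

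Next I would construct the double cover. Using Lemma \ref{lem: E}, the sheaf $\CE = \varphi_*\CO_{X'_0}(2E_0)$ is locally free of rank two, so $Y := \PP(\CE) \stackrel{q}\to \FF_e$ is a $\PP^1$-bundle. By Lemma \ref{lem: Noether line flatness} every fiber $W$ of $\varphi$ is an integral curve of arithmetic genus two, hence by \cite[Theorem 3.3]{Catanese_Franciosi_Hulek_Reid} the system $|2E_0|_W| = |K_W|$ is base-point-free on every fiber. This produces the relative canonical morphism $\rho \colon X'_0 \to Y$, which is a finite double cover, fitting into the factorization $f'_0 = p \circ q \circ \rho$.

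To get the explicit numerics of (ii) and (iii), I would transcribe Lemma \ref{lem: Noether line split} and Lemma \ref{lem: Noether line branch locus} under the identity $e + 2k = p_g(X) - 2 = 3m - 4$. This gives $\tfrac{4e + 2k + 4}{3} = m + e$, so Lemma \ref{lem: Noether line split} rewrites as $\CE = \CO_{\FF_e} \oplus \CO_{\FF_e}(-2\bm{s} - (m+e)\bm{l})$, which is (ii). Likewise, the same substitution turns $B_1 = E_Y$ into the relative hyperplane section of $Y$ and $B_2 = B' \sim 5E_Y + 10 q^*(\bm{s} + \tfrac{2e+k+2}{3}\bm{l})$ into $B_2 \sim 5B_1 + 5(m+e)q^*\bm{l} + 10 q^*\bm{s}$ as required; the disjointness $B_1 \cap B_2 = \emptyset$ is Lemma \ref{lem: Noether line branch locus}(3).

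Finally, for the last assertion, Lemma \ref{lem: Noether line KX0 ample} shows $K_{X_0}$ is ample when $p_g(X) \geq 23$; since $X_0$ is the relative canonical model it already has canonical singularities, and a variety of general type with canonical singularities and ample canonical divisor is by definition the canonical model, so $X_0$ coincides with the canonical model of $X$. Since the genuine work is all contained in the preceding lemmas, I do not anticipate a substantive obstacle at this step; the only task is bookkeeping and carrying through the single arithmetic identity $e + 2k = 3m - 4$ to match the $(e, k)$-parametrization of the lemmas with the $(m, e)$-parametrization of the theorem statement.
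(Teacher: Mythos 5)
Your proposal is correct and follows essentially the same route as the paper: the paper's own proof is literally the observation that $\tfrac{4e+2k+4}{3} = m+e$ together with a citation of Lemmas \ref{lem: Noether line KX0}, \ref{lem: Noether line branch locus}, \ref{lem: Noether line split}, \ref{lem: Noether line Hodge bundle} and (for the last assertion) \ref{lem: Noether line KX0 ample}, exactly as you assemble them. Your writeup is in fact more explicit than the paper's about where the factorization through $Y = \PP(\CE)$ and the flatness/integrality of the fibers of $\varphi$ enter, but the content is identical.
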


\begin{proof}
	Note that $\frac{4e + 2k + 4}{3} = \frac{p_g(X) + 2}{3} + e = m + e$. Thus the classification is a combination of Lemma \ref{lem: Noether line KX0}, \ref{lem: Noether line branch locus}, \ref{lem: Noether line split} and \ref{lem: Noether line Hodge bundle}. If $p_g(X) \ge 23$, by Lemma \ref{lem: Noether line KX0 ample}, $K_{X_0}$ is ample. Thus $X_0$ is the canonical model of $X$.
\end{proof}

\subsection{Examples of threefolds on the three Noether lines} \label{subsection: example} In this subsection, we construct $3$-folds of general type that satisfy each Noether equality in Theorem \ref{thm: Noether lines}.

Let $\FF_e$ be the Hirzebruch surface for some $e \ge 3$. Denote by $\bm{l}$ a ruling on $\FF_e$ and by $\bm{s}$ the section of $\FF_e$ over $\PP^1$ with $\bm{s}^2 = -e$. Fix an integer $a \ge 2e$. Let $D = 2\bm{s} + a\bm{l}$, and let
$$
p: Y := \PP \left(\CO_{\FF_e} \oplus \CO_{\FF_e}(-D)\right) \to \FF_e
$$
be the $\PP^1$-bundle over $\FF_e$. Denote by $V$ the effective relative hyperplane section of $Y$. Then we have
\begin{equation} \label{eq: example KY}
	K_Y =-2 V + p^*(K_{\FF_e} - D) = -2V - p^*\left(4 \bm{s} + (a+e+2) \bm{l}
	\right).
\end{equation}
\begin{lemma} \label{lem: example bpf}
	The linear system $|V + p^*D|$ is base point free. Moreover, a general member in $|V + p^*D|$ does not intersect $V$.
\end{lemma}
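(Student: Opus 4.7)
The plan is to compute the pushforward $p_*\CO_Y(V+p^*D)$ explicitly and then exploit it twice, once for base-point-freeness and once for the disjointness claim.

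First, by the standard identification $p_*\CO_Y(V) = \mathcal{E} := \CO_{\FF_e} \oplus \CO_{\FF_e}(-D)$ together with the projection formula, I would obtain
$$
p_*\CO_Y(V+p^*D) \simeq \CO_{\FF_e}(D) \oplus \CO_{\FF_e}.
$$
On the Hirzebruch surface $\FF_e$, the divisor $D = 2\bm{s} + a\bm{l}$ is base-point-free whenever $a \ge 2e$ (the classical criterion for $\FF_e$, cf.\ the Nagata reference already used in the paper), so both summands are globally generated on $\FF_e$. To promote this to base-point-freeness of $|V+p^*D|$ on $Y$, I would factor the absolute evaluation map through the relative one as
$$
H^0(Y, V+p^*D) \otimes \CO_Y \twoheadrightarrow p^*\bigl(\CO_{\FF_e}(D) \oplus \CO_{\FF_e}\bigr) \twoheadrightarrow \CO_Y(V+p^*D),
$$
where the second arrow is the tautological surjection $p^*\mathcal{E} \twoheadrightarrow \CO_Y(V)$ twisted by $p^*D$, and the first is surjective by the global generation above.

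For the second statement, the key is to show $(V+p^*D)|_V \sim 0$. Pushing forward the section sequence $0 \to \CO_Y \to \CO_Y(V) \to \CO_V(V|_V) \to 0$ and using $R^1 p_*\CO_Y = 0$ gives $\CO_V(V|_V) \simeq \mathcal{E}/\CO_{\FF_e} \simeq \CO_{\FF_e}(-D)$ under the isomorphism $p|_V \colon V \xrightarrow{\sim} \FF_e$; hence $V|_V \sim -D$ and $(V+p^*D)|_V \sim 0$. I would then apply the restriction sequence
$$
0 \to \CO_Y(p^*D) \to \CO_Y(V+p^*D) \to \CO_V \to 0
$$
together with $h^0(Y, p^*D) = h^0(\FF_e, \CO(D))$ and $h^0(Y, V+p^*D) = h^0(\FF_e, \CO(D)) + 1$ (both read off from the pushforwards) to conclude that the restriction $H^0(Y, V+p^*D) \to H^0(V, \CO_V) = \CC$ is surjective. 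A section mapping to $1 \in H^0(V, \CO_V)$ produces a divisor $M \in |V+p^*D|$ with $V \not\subseteq M$ and $M|_V$ the empty divisor, so $V \cap M = \emptyset$; since $\{M' \in |V+p^*D| : V \subseteq M'\}$ is then a proper closed subsystem, a general member is likewise disjoint from $V$.

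No step is a serious obstacle; the only bookkeeping care is to keep the $\PP(\mathcal{E}) = \mathrm{Proj}(\mathrm{Sym}\,\mathcal{E})$ convention of the paper consistent, so that $p_*\CO_Y(V) = \mathcal{E}$ (rather than $\mathcal{E}^\vee$) and all signs of $D$ in the computations for $V|_V$ and the pushforward come out correctly, matching the formula $K_Y = -2V + p^*(K_{\FF_e} - D)$ already recorded.
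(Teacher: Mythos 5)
Your proof is correct and follows essentially the same route as the paper: both arguments rest on the splitting $p_*\CO_Y(V+p^*D) \simeq \CO_{\FF_e}(D)\oplus\CO_{\FF_e}$, the triviality of $\CO_V(V+p^*D)$, and the surjectivity of the restriction map $H^0(Y,V+p^*D)\to H^0(V,\CO_V)$ (which the paper deduces from $h^1(Y,p^*D)=0$ and you deduce from the equivalent dimension count on pushforwards). The only cosmetic difference is the base-point-freeness step, where you use global generation of the pushforward together with the tautological surjection $p^*\CE\twoheadrightarrow\CO_Y(V)$, while the paper first confines the base locus to $V$ via the subsystem $V+|p^*D|$ and then applies the same restriction-to-$V$ argument.
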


\begin{proof}
	Since $a \ge 2e$, it is clear that $|D|$ is base point free. We only need to prove that the restricted linear system $|V + p^*D| |_V$ on $V$ is base point free. By the definition of $V$, we have $\CO_V(V) = \CO_V(-p^*D)$. Moreover, using the spectral sequence twice, we deduce that
	$$
	h^1(Y, p^*D) = h^1(\FF_e, D) = h^1 \left( \PP^1, \CO_{\PP^1}(a - 2e) \oplus \CO_{\PP^1}(a - e) \oplus \CO_{\PP^1}(a) \right) = 0.
	$$
	Therefore, the restriction map
	$$
	H^0(Y, V + p^*D) \to H^0(V, V + p^*D) = H^0(V, \CO_V)
	$$
	is surjective, which implies that $|V + p^*D| |_V$ is base point free. Note that $\CO_V(V + p^*D)  = \CO_V$. Thus a general member in $|V + p^*D|$ will not intersect $V$.
\end{proof}

Fix another integer $b$ with $2b \ge 5a$. Choose a general member $H \in |5V + p^*(10\bm{s} + 2b\bm{l})| = |5V + 5p^*D + (2b-5a) p^*\bm{l}|$. By Bertini's theorem and Lemma \ref{lem: example bpf}, $H$ is smooth, and the divisor $B = H + V$ is simple normal crossing. Let
$$
\rho: X' \to Y
$$
be the double cover branched along $B$. Set $\psi:= p \circ \rho: X' \to \FF_e$. By \cite[Corollary 2.31 (3) and Proposition 5.20 (3)]{Kollar_Mori}, we deduce that $X'$ has at worst canonical singularities. Moreover, if $2b = 5a$, by Lemma \ref{lem: example bpf}, $H \cap V = \emptyset$, and $X'$ is smooth.

Denote $L = 3V + p^*(5 \bm{s} + b\bm{l})$. Then $2L \sim B$. By \eqref{eq: example KY}, we have
$$
K_{X'} \sim \rho^*(K_Y + L) = \rho^*V + \rho^*\left(\bm{s} + (b - a - e - 2) \bm{l} \right).
$$
For simplicity, we denote $N= \bm{s} + (b-a-e-2) \bm{l}$. Since $2b \ge 5a$ and $a \ge 2e \ge 6$, we have
$$
b - a - e - 2 \ge \frac{3}{2} a - e - 2 \ge 2e - 2.
$$
Thus $N$ is ample. Recall that $V$ is contained in the branch locus of $\rho$. We may write $\rho^*V = 2E$, where $E \simeq \FF_e$ is a section of $\psi$. Let
$$
A = K_{X'} - E \sim E + \rho^*N.
$$

\begin{lemma}\label{lem: example A nef big}
	The divisor $A$ is nef and big. Moreover,
	$$
	A^3=3b-\frac{7}{2}a-4e-6.
	$$ 	
\end{lemma}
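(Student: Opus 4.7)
The plan is to verify nefness of $A$ via a case analysis on test curves, compute $A^3$ by expanding $(E + \rho^*N)^3$ while exploiting that $\rho^*N$ is pulled back from the surface $\FF_e$, and then deduce bigness from nefness together with $A^3 > 0$.

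For nefness, the key observation is that $\rho^*N$ is nef, being the pullback of the ample divisor $N$ on $\FF_e$. Hence for any irreducible curve $C \not\subset E$, the inequality $(E \cdot C) \ge 0$ gives $(A \cdot C) \ge 0$ at once. To handle curves contained in $E$, I would compute $A|_E$ using the isomorphism $\psi|_E \colon E \xrightarrow{\sim} \FF_e$. The pullback portion yields $(\rho^*N)|_E \sim N = \bm{s} + (b - a - e - 2)\bm{l}$. For the self-contribution $E|_E$, I would start from $\rho^*V = 2E$ together with the identity $\CO_V(V) \cong \CO_V(-p^*D)$ obtained inside the proof of Lemma \ref{lem: example bpf}, and restrict to $E$ under the isomorphism $\rho|_E \colon E \xrightarrow{\sim} V$ to obtain $E|_E \sim_{\QQ} -\bm{s} - \tfrac{a}{2}\bm{l}$. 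Adding the two contributions gives $A|_E \sim_{\QQ} \bigl(b - \tfrac{3a}{2} - e - 2\bigr)\bm{l}$, and the hypotheses $2b \ge 5a$ and $a \ge 2e$ force the coefficient to be at least $a - e - 2 \ge e - 2 \ge 1$. Thus $A|_E$ is a nonnegative multiple of the nef class $\bm{l}$, completing the nefness verification.

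For the triple self-intersection, I would expand
\[
A^3 = E^3 + 3\, E^2 \cdot \rho^*N + 3\, E \cdot (\rho^*N)^2 + (\rho^*N)^3.
\]
The last summand vanishes because $\rho^*N = \psi^*N$ is pulled back via a morphism of relative dimension one to a surface, so three general divisors in $|\rho^*N|$ have empty triple intersection. Since $E$ is a section of $\psi$, the remaining mixed intersections reduce to computations on $\FF_e$: one obtains $E \cdot (\rho^*N)^2 = N^2 = 2b - 2a - 3e - 4$, $E^2 \cdot \rho^*N = (E|_E) \cdot N = 2e + \tfrac{a}{2} - b + 2$, and $E^3 = (E|_E)^2 = a - e$. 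Summing these contributions produces exactly $A^3 = 3b - \tfrac{7}{2}a - 4e - 6$. Bigness then follows from nefness together with $A^3 > 0$: using $2b \ge 5a$ and $a \ge 2e \ge 6$, one estimates $A^3 \ge 4a - 4e - 6 \ge 4e - 6 \ge 6$.

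The main technical point is computing $E|_E$ correctly through the double cover, which requires carefully tracing the normal bundle of $V$ in $Y$ through the ramification. Because $X'$ typically has $A_1$ singularities along $\rho^{-1}(H \cap V)$, the divisor $E$ is only $\QQ$-Cartier in general, which is exactly why $A^3$ may fail to be an integer; fortunately the $\QQ$-linear equivalence for $E|_E$ is enough, since all subsequent work is purely numerical intersection theory.
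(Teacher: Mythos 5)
Your proposal is correct and follows essentially the same route as the paper: nefness via the splitting into curves inside and outside $E$ together with the computation $A|_E \equiv (b - \tfrac{3}{2}a - e - 2)\bm{l}$ obtained from $E|_E \equiv \tfrac{1}{2}V|_V \equiv -\bm{s} - \tfrac{a}{2}\bm{l}$, reduction of all intersection numbers to $\FF_e$ using that $E$ is a section of $\psi$, and bigness from $A^3 > 0$. The only cosmetic difference is in computing $A^3$: you expand the full trinomial $(E + \rho^*N)^3$ and evaluate $E^3 = a - e$ directly, whereas the paper first observes $(A^2 \cdot E) = (A|_E)^2 = 0$ and thereby reduces to $A^3 = (A^2 \cdot \rho^*N) = (E^2 \cdot \rho^*N) + 2(E \cdot (\rho^*N)^2)$, avoiding the $E^3$ term; both yield $3b - \tfrac{7}{2}a - 4e - 6$.
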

\begin{proof}
	Note that $\rho|_E: E \to V$ is an isomorphism,  both of which are isomorphic to $\FF_e$. Since $\CO_{E}(2E) = (\rho|_E)^*\CO_{V}(V)$, under the above isomorphism, we have
	\begin{equation} \label{eq: example E2}
		E|_E  \equiv \frac{1}{2} V|_V \equiv  - \bm{s} - \frac{1}{2} a \bm{l}.
	\end{equation}
	Thus
	\begin{equation} \label{eq: example AE}
		A|_E \equiv E|_E + (\rho^*N)|_E \equiv \left( b - \frac{3}{2}a - e -2 \right) \bm{l}.
	\end{equation}
	Since $b - \frac{3}{2}a - e -2 \ge a - e - 2 \ge e-2 > 0$, we deduce that $A|_E$ is nef.
	
	Let $\Gamma$ be an integral curve on $X'$. If $\Gamma \subseteq E$, by the nefness of $A|_E$, we have $(A \cdot \Gamma) \ge 0$. If $\Gamma \nsubseteq E$, by the ampleness of $N$, we have $(A \cdot \Gamma) \ge ((\rho^*N) \cdot \Gamma) \ge 0$. Thus $A$ is nef.
	
	By \eqref{eq: example AE}, $(A^2 \cdot E) = (A|_E)^2 = 0$. Thus
	$$
	A^3 = \left(A^2 \cdot (\rho^*N)\right) = \left(E^2 \cdot (\rho^*N)\right) + 2\left(E \cdot (\rho^*N)^2\right).
	$$
	By \eqref{eq: example E2}, we have
	$$
	\left(E^2 \cdot (\rho^*N)\right) = \left(-\bm{s} - \frac{1}{2} a \bm{l}\right) \left(\bm{s} + (b-a-e-2) \bm{l}\right) = -b + \frac{1}{2}a + 2e + 2,
	$$
	and
	$$
	\left(E \cdot (\rho^*N)^2\right) = N^2 = \left(\bm{s} + (b-a-e-2) \bm{l}\right)^2 = 2b-2a-3e-4.
	$$
	Put the above equalities together, and it follows that
	$$
	A^3 = 3b - \frac{7}{2}a - 4e - 6.
	$$
	Since $3b - \frac{7}{2}a - 4e - 6 \ge 4a - 4e - 6 \ge 4e - 6 > 0$, we conclude that $A$ is big.
\end{proof}

\begin{lemma} \label{lem: example exceptional locus}
	Let $\Gamma$ be an integral curve on $X'$. Then $(A \cdot \Gamma)=0$ if and only if $\Gamma$ is a ruling on $E$. Moreover, if $\Gamma$ is a ruling on $E$, then $(K_{X'}\cdot\Gamma)=(E\cdot\Gamma)=-1$.
\end{lemma}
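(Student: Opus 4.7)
The plan is to split into two cases according to whether the integral curve $\Gamma$ is contained in the section $E$ of $\psi$. Throughout I will use the decomposition $A \sim E + \rho^*N$ together with the two key numerical identities from the proof of Lemma \ref{lem: example A nef big}: namely $E|_E \equiv -\bm{s} - \tfrac{1}{2}a\bm{l}$ and $A|_E \equiv (b - \tfrac{3}{2}a - e - 2)\bm{l}$, where the coefficient $b - \tfrac{3}{2}a - e - 2 \ge e - 2 > 0$ is \emph{strictly} positive under our hypotheses $2b \ge 5a$ and $a \ge 2e \ge 6$.

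For the implication $(A\cdot\Gamma)=0 \Longrightarrow \Gamma$ is a ruling on $E$, suppose first that $\Gamma \not\subseteq E$. Then $(E\cdot\Gamma)\ge 0$ since $\Gamma$ meets $E$ properly. Since $\rho$ is a finite morphism and $\Gamma$ is a curve, $\rho_*\Gamma$ is a nonzero effective $1$-cycle on $Y$, and by the projection formula together with the ampleness of $N$ we obtain $(\rho^*N\cdot\Gamma)=(N\cdot\rho_*\Gamma)>0$. Hence $(A\cdot\Gamma)>0$, a contradiction. Therefore $\Gamma\subseteq E\cong\FF_e$, and $(A\cdot\Gamma)=(A|_E\cdot\Gamma)=(b-\tfrac{3}{2}a-e-2)(\bm{l}\cdot\Gamma)$. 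Since the coefficient is positive, the vanishing forces $(\bm{l}\cdot\Gamma)=0$ on $\FF_e$, which for an integral curve means $\Gamma$ is numerically proportional to $\bm{l}$, hence a ruling of $E$. The reverse implication is immediate from $\bm{l}^2=0$.

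For the final assertion, assume $\Gamma$ is a ruling of $E$. Using $E|_E \equiv -\bm{s}-\tfrac{1}{2}a\bm{l}$ and the standard intersection numbers $\bm{s}\cdot\bm{l}=1$, $\bm{l}^2=0$ on $\FF_e$, I compute $(E\cdot\Gamma)=(E|_E\cdot\bm{l})=-1$. Combining this with $K_{X'}=A+E$ and $(A\cdot\Gamma)=0$ yields $(K_{X'}\cdot\Gamma)=-1$ as well.

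The only mildly delicate point is the strict positivity $(\rho^*N\cdot\Gamma)>0$ for $\Gamma\not\subseteq E$, which requires finiteness of $\rho$ (so that $\rho_*\Gamma\ne 0$) together with ampleness of $N$; everything else reduces to bookkeeping on $\FF_e$.
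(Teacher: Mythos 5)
Your case split ($\Gamma\subseteq E$ versus $\Gamma\not\subseteq E$), the treatment of curves inside $E\cong\FF_e$, and the computation of $(E\cdot\Gamma)=(K_{X'}\cdot\Gamma)=-1$ for a ruling are all correct. The gap is in the case $\Gamma\not\subseteq E$: the inequality $(\rho^*N\cdot\Gamma)>0$ does not follow from what you invoke, and it is in fact false for some of these curves. The divisor $N=\bm{s}+(b-a-e-2)\bm{l}$ lives on $\FF_e$, not on $Y$; the symbol $\rho^*N$ is the paper's shorthand for $\psi^*N=\rho^*p^*N$ with $\psi=p\circ\rho$. The pullback $p^*N$ is only nef on $Y$ (it is trivial on the fibers of the $\PP^1$-bundle $p$), so ampleness of $N$ on $\FF_e$ plus finiteness of $\rho$ only gives $(\psi^*N\cdot\Gamma)=(p^*N\cdot\rho_*\Gamma)\ge 0$: the cycle $\rho_*\Gamma$ is indeed nonzero, but it may be contracted by $p$, and then the intersection vanishes. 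Concretely, if $\Gamma$ is an irreducible component of a fiber of $\psi$ (such a $\Gamma$ is never contained in the section $E$), then $(\psi^*N\cdot\Gamma)=0$ and your argument only yields $(A\cdot\Gamma)=(E\cdot\Gamma)\ge 0$, which does not rule out $(A\cdot\Gamma)=0$ for these curves.

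To close the gap you must show $(E\cdot\Gamma)>0$ for every such $\psi$-vertical component $\Gamma$. This is true because $E$ is a section of $\psi$ and, since $V$ lies in the branch locus of $\rho$, every irreducible component of every fiber of $\psi$ passes through the unique point of $E$ over the corresponding fiber of $p$. This is exactly why the paper organizes the proof by splitting into curves vertical and horizontal with respect to $\psi$ rather than by whether $\Gamma\subseteq E$: for vertical $\Gamma$ one gets $(A\cdot\Gamma)=(E\cdot\Gamma)>0$ directly, and for horizontal $\Gamma$ the projection formula with the ample $N$ on $\FF_e$ gives $(\psi^*N\cdot\Gamma)>0$, forcing $(E\cdot\Gamma)<0$ and hence $\Gamma\subseteq E$, after which your concluding computation on $\FF_e$ applies verbatim.
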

\begin{proof}
	Suppose that $\Gamma$ is a ruling on $E$. By \eqref{eq: example AE}, $(A \cdot \Gamma) = 0$. By \eqref{eq: example E2}, $(E \cdot \Gamma) = (E|_E \cdot \Gamma) = -1$. Thus $(K_{X'} \cdot \Gamma) = (A \cdot \Gamma) + (E \cdot \Gamma) = -1$.
	
	Now suppose that $(A \cdot \Gamma)=0$. If $\Gamma$ is vertical with respect to $\psi$, then $(A \cdot \Gamma) = (E \cdot \Gamma) > 0$. Thus $\Gamma$ is horizontal with respect to $\psi$. Note that $N$ is ample, which implies that $(E \cdot \Gamma) = (A \cdot \Gamma) - ((\phi^*N) \cdot \Gamma) < 0$. We conclude that $\Gamma \subseteq E$. By \eqref{eq: example AE} again, we deduce that $\Gamma$ is a ruling on $E$.
\end{proof}

\begin{lemma}\label{lem: example pg}
	We have
	$$
	p_g(X')=2b-2a-3e-2.
	$$
\end{lemma}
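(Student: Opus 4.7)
The plan is to compute $p_g(X')$ by combining the standard double cover trace formula with a projection-formula calculation on $\FF_e$. Since $\rho \colon X' \to Y$ is the flat double cover branched along $B \sim 2L$, we have
$$\rho_*\omega_{X'}=\omega_Y\oplus\omega_Y(L),$$
so $p_g(X')=h^0(Y,\omega_Y)+h^0(Y,\omega_Y(L))$. The surface $Y$ is a $\PP^1$-bundle over the rational surface $\FF_e$, hence rational, so $h^0(Y,\omega_Y)=0$. Combining the formula \eqref{eq: example KY} for $K_Y$ with $L=3V+p^*(5\bm{s}+b\bm{l})$ gives
$$K_Y+L\sim V+p^*N,\qquad N=\bm{s}+(b-a-e-2)\bm{l}.$$

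Next I would push forward to $\FF_e$. The canonical bundle formula $K_Y\sim -2V+p^*(K_{\FF_e}-D)$ records that the paper uses the convention under which $V$ is a relative tautological divisor with $p_*\CO_Y(V)=\CE=\CO_{\FF_e}\oplus\CO_{\FF_e}(-D)$. By the projection formula,
$$p_*\CO_Y(V+p^*N)=\CO_{\FF_e}(N)\oplus\CO_{\FF_e}(N-D),$$
so $h^0(Y,V+p^*N)=h^0(\FF_e,N)+h^0(\FF_e,N-D)$. The divisor $N-D=-\bm{s}+(b-2a-e-2)\bm{l}$ has negative coefficient along the section $\bm{s}$, and the effective cone of $\FF_e$ (for $e\ge 1$) is generated by $\bm{s}$ and $\bm{l}$, so $h^0(\FF_e,N-D)=0$.

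It remains to evaluate $h^0(\FF_e,\bm{s}+(b-a-e-2)\bm{l})$. Using the standard expansion $h^0(\FF_e,\alpha\bm{s}+\beta\bm{l})=\sum_{i=0}^{\alpha}\max(0,\beta-ie+1)$ for $\alpha\ge 0$, this equals $(b-a-e-1)+(b-a-2e-1)=2b-2a-3e-2$, provided both summands are nonnegative. The inequality $b-a-2e-1\ge 0$ follows from the standing hypotheses $2b\ge 5a$ and $a\ge 2e$, which give $b-a-2e\ge\tfrac{a}{2}\ge e\ge 3$. Adding up yields $p_g(X')=2b-2a-3e-2$, as claimed. The only real subtlety in this argument is matching conventions so that $p_*\CO_Y(V)$ is $\CE$ rather than its dual; this is fixed by the canonical bundle identity \eqref{eq: example KY}, and once this is in hand the rest is a routine Hirzebruch surface cohomology computation.
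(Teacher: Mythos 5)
Your proof is correct and follows essentially the same route as the paper's: both identify $p_g(X')=h^0(Y,V+p^*N)$ via the double cover decomposition of $\rho_*\omega_{X'}$ (the paper's summand $\CO_Y(V+p^*N-L)$ is exactly your $\omega_Y$), then push forward by $p$ to reduce to $h^0(\FF_e,N)$ and compute on $\PP^1$. The extra care you take with the convention $p_*\CO_Y(V)=\CE$ and with verifying $b-a-2e-2\ge 0$ is consistent with the paper's hypotheses and introduces no discrepancy.
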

\begin{proof}
	Recall that $K_{X'} \sim \rho^*(V + p^*N)$. By the projection formula, we have
	$$
	\rho_*\omega_{X'} = \CO_Y(V + p^*N) \oplus \CO_Y(V + p^*N - L).
	$$
	Note that $h^0(Y, V + p^*N - L) = 0$. Thus $p_g(X') = h^0(Y, V + p^*N)$. By the projection formula again, we have
	$$
	p_*\CO_Y(Y, V + p^*N) = \CO_{\FF_e}(N) \oplus \CO_{\FF_e}(N - D).
	$$
	Since $h^0(\FF_e, N-D) = 0$, we deduce that $p_g(X') = h^0(\FF_e, N)$. Finally, by the projection formula once more, we deduce that
	\begin{align*}
		h^0(\FF_e, N) & = h^0 \left(\PP^1, \CO_{\PP^1}(b - a - e - 2)\right) + h^0 \left(\PP^1, \CO_{\PP^1}(b - a - 2e - 2) \right) \\
		& = 2b - 2a - 3e - 2.
	\end{align*}
	Thus the result follows.
\end{proof}

By Lemma \ref{lem: example exceptional locus}, $K_{X'}$ is not nef. Let $\pi: X' \to X$ be the contraction of a $K_{X'}$-negative extremal ray $R$.

\begin{lemma}\label{lem: example construction}
	The morphism $\pi$ is just the contraction of the rulings on $E$. Moreover, $X$ has at worst canonical singularities, and $K_X$ is nef and big. In particular, we have
	\begin{equation} \label{eq: example volume}
		K_X^3=3b-\frac{7}{2}a-4e-6,
	\end{equation}
	and
	\begin{equation} \label{eq: example pg}
		p_g(X)=2b-2a-3e-2.
	\end{equation}
\end{lemma}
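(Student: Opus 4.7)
My plan is three-stage: (i) identify the unique $K_{X'}$-negative extremal ray of $\overline{NE}(X')$ as the ray $R := \RR_{\ge 0}[\ell]$ spanned by a ruling $\ell$ on $E$, which forces $\pi$ to be its contraction; (ii) verify that $\pi$ is divisorial with $\pi(E)\simeq \PP^1$, that $X$ has canonical singularities, and that $\pi^*K_X = A$; (iii) derive \eqref{eq: example volume} and \eqref{eq: example pg} from Lemmas \ref{lem: example A nef big} and \ref{lem: example pg}.

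For (i), Lemma \ref{lem: example exceptional locus} gives $(K_{X'}\cdot \ell) = -1$, so $R$ is $K_{X'}$-negative. For any other $K_{X'}$-negative extremal ray $R' = \RR_{\ge 0}[C']$, with $C'$ an integral rational curve produced by the cone theorem, the decomposition $K_{X'} = A + E$ with $A$ nef forces $(E\cdot C') < 0$, hence $C'\subset E \cong \FF_e$. Writing $[C'] = \alpha[\bm{s}] + \beta[\ell]$ with $\alpha, \beta \ge 0$ and $\beta \ge \alpha e$, I will pair $[\bm{s}]$ and $[\ell]$ with $\psi^*\bm{l}_{\FF_e}$ on $X'$, which evaluates to $1$ and $0$ respectively via the isomorphism $\psi|_E\colon E \xrightarrow{\sim} \FF_e$; this shows $[\bm{s}]$ and $[\ell]$ remain linearly independent in $N_1(X')$. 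If $\alpha > 0$, then $[C'] = \alpha[\bm{s}] + \beta[\ell]$ exhibits $[C']$ as a non-trivial sum of effective classes not proportional to $[C']$, contradicting extremality. The subcase $C' = \bm{s}$ is excluded because $(K_{X'}\cdot\bm{s}) = b - 2a - 2 > 0$ under the assumptions $b \ge \tfrac{5a}{2}$ and $a \ge 2e$. Hence $R' = R$.

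For (ii)--(iii), the exceptional locus of $\pi$ is swept out by the rulings and equals $E$, so $\pi$ is divisorial with $\pi(E) \simeq \PP^1$, and $X$ inherits canonical singularities by the standard discrepancy argument for $K$-negative extremal contractions from a canonical variety. Setting $K_{X'} = \pi^*K_X + mE$ and intersecting with $\ell$ yields $m = 1$, so $\pi^*K_X = A$, whence $K_X$ is nef and big by Lemma \ref{lem: example A nef big}, and $K_X^3 = A^3 = 3b - \tfrac{7}{2}a - 4e - 6$. For the plurigenus, I will push forward $0 \to \CO_{X'}\to \CO_{X'}(E)\to \CO_E(E)\to 0$; since $\CO_E(E)|_\ell \cong \CO_{\PP^1}(-1)$ gives $\pi_*\CO_E(E) = 0$, we obtain $\pi_*\CO_{X'}(E) = \CO_X$ and hence $\pi_*\omega_{X'} = \omega_X$ by the projection formula, so $p_g(X) = p_g(X') = 2b - 2a - 3e - 2$ by Lemma \ref{lem: example pg}. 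The main obstacle is the extremality argument in step (i): Lemma \ref{lem: example exceptional locus} alone is not enough, because one must additionally rule out integral curves in $E$ of class $\alpha\bm{s} + \beta\bm{l}$ with $\alpha > 0$, and this requires both the linear independence of $[\bm{s}]$ and $[\ell]$ in $N_1(X')$ and the positivity of $(K_{X'}\cdot \bm{s})$.
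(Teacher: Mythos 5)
Your proof is correct and follows essentially the same route as the paper: decompose $K_{X'}=A+E$ to force the extremal curve into $E\cong\FF_e$, use $(K_{X'}\cdot\bm{s})=b-2a-2>0$ together with extremality to conclude the ray is spanned by a ruling, then identify $\pi^*K_X=A$ by intersecting $K_{X'}=\pi^*K_X+\lambda E$ with a ruling. The only (harmless) divergence is in the last step, where you compute $\pi_*\omega_{X'}=\omega_X$ directly via the pushforward of $0\to\CO_{X'}\to\CO_{X'}(E)\to\CO_E(E)\to 0$, while the paper simply invokes birational invariance of $p_g$ for varieties with canonical singularities.
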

\begin{proof}
	Let $\Gamma$ be an integral curve which generates the $K_{X'}$-negative extremal ray $R$, i.e., $R = \mathbb{R}_{\ge 0}[\Gamma]$, where $[\Gamma]$ is the numerical class of $\Gamma$. Since $K_{X'} = A + E$ and $A$ is nef by Lemma \ref{lem: example A nef big}, we have $(E \cdot \Gamma) < 0$. Thus $\Gamma \subseteq E$.
	
	Since $E\cong \mathbb{F}_e$, we may write $\Gamma \equiv m\bm{s} + n \bm{l}$, where $m, n \ge 0$. Thus $[ms]+[nl]=[\Gamma]\in R$. Since $R$ is extremal, we have $[m\bm{s}]\in R$ and $[n\bm{l}]\in R$. By \eqref{eq: example E2} and \eqref{eq: example AE},
	$$
	(K_{X'} \cdot \bm{s}) = (A|_E \cdot \bm{s}) + (E|_E \cdot \bm{s}) = b - 2a - 2 \ge \frac{1}{2} a - 2 \ge e - 2 > 0.
	$$
	Thus $m = 0$, and $R$ is generated by $\bm{l}$. As a result, $\pi$ is just the contraction of the rulings on $E$.
	
	Since $\pi$ is a divisorial contraction, by \cite[Corollary 3.43 (3)]{Kollar_Mori}, $X$ has at worst canonical singularities. Thus we may write
	$$
	K_{X'}=\pi^*K_X+\lambda E,
	$$
	where $\lambda \ge 0$. By Lemma \ref{lem: example exceptional locus}, $-\lambda = \lambda (E \cdot \bm{l}) = (K_{X'} \cdot \bm{l}) = -1$. Thus $\lambda = 1$. It implies that $\pi^*K_X = K_{X'} - E = A$. By Lemma \ref{lem: example A nef big}, we conclude that $K_X$ is nef and big. The equalities \eqref{eq: example volume} and \eqref{eq: example pg} follow immediately from Lemma \ref{lem: example A nef big} and \ref{lem: example pg}.
\end{proof}

\begin{prop} \label{prop: example}
	Let $e \ge 3$ and $k \ge 0$ be two integers.
	\begin{itemize}
		\item [(1)] There exists a minimal $3$-fold $X_{e,k}$ of general type with $p_g(X_{e,k}) = 3e + 6k - 2$ and $K_{X_{e,k}}^3 = 4e + 8k - 6$. In particular,
		$$
		K_{X_{e,k}}^3 = \frac{4}{3}p_g(X_{e,k}) - \frac{10}{3}.
		$$
		\item [(2)] There exists a minimal $3$-fold $X_{e,k}$ of general type with $p_g(X_{e,k}) = 3e + 6k + 2$ and $K_{X_{e,k}}^3 = 4e + 8k - \frac{1}{2}$. In particular,
		$$
		K_{X_{e,k}}^3 = \frac{4}{3}p_g(X_{e,k}) - \frac{19}{6}.
		$$
		\item [(3)] There exists a minimal $3$-fold $X_{e,k}$ of general type with $p_g(X_{e,k}) = 3e + 6k$ and $K_{X_{e,k}}^3 = 4e + 8k - 3$. In particular,
		$$
		K_{X_{e,k}}^3 = \frac{4}{3}p_g(X_{e,k}) - 3.
		$$
	\end{itemize}
\end{prop}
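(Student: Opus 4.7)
The plan is to apply the explicit double-cover construction developed in Lemmas \ref{lem: example bpf}--\ref{lem: example construction} three times, once for each case, with a carefully chosen pair $(a,b)$. Once the parameters are fixed one simply reads off $p_g(X) = 2b - 2a - 3e - 2$ and $K_X^3 = 3b - \tfrac{7}{2}a - 4e - 6$ from the already-established formulas, and the Noether equality in the proposition then follows by a single arithmetic identity.

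Specifically, for case (1) I would take $(a,b) = (2e + 2k,\, 5e + 5k)$; here $2b = 5a$, so by Lemma \ref{lem: example bpf} the general branch member $H$ can be chosen disjoint from $V$, which makes $X'$ smooth, and the formulas give $p_g = 3e + 6k - 2$ with $K_X^3 = 4e + 8k - 6 = \tfrac{4}{3}p_g - \tfrac{10}{3}$. For case (2) I would take $(a,b) = (2e + 2k + 1,\, 5e + 5k + 3)$, so that $2b - 5a = 1$ and $V \cap H$ is a single ruling of $V \simeq \FF_e$, yielding $p_g = 3e + 6k + 2$ and $K_X^3 = 4e + 8k - \tfrac{1}{2} = \tfrac{4}{3}p_g - \tfrac{19}{6}$. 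For case (3) I would take $(a,b) = (2e + 2k,\, 5e + 5k + 1)$, so that $2b - 5a = 2$ and $V \cap H$ is a pair of rulings, yielding $p_g = 3e + 6k$ and $K_X^3 = 4e + 8k - 3 = \tfrac{4}{3}p_g - 3$. In every case the hypotheses $a \geq 2e$ and $2b \geq 5a$ of the construction are satisfied thanks to $e \geq 3$ and $k \geq 0$, and a quick check that $b - a - e - 2 > 0$ (as used in Lemma \ref{lem: example A nef big}) confirms ampleness of the auxiliary divisor $N$ needed to make $K_X$ nef and big.

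The main obstacle will be to upgrade the conclusion of Lemma \ref{lem: example construction} from ``$X$ has at worst canonical singularities with $K_X$ nef and big'' to genuine minimality, namely terminality and $\QQ$-factoriality, in all three cases. In case (1), $X'$ is smooth and the divisorial contraction of the ruling of $E \simeq \FF_e$ (whose general fiber $\ell$ has normal bundle $\CO_\ell \oplus \CO_\ell(-1)$) can be analyzed in local coordinates to show that $X$ acquires isolated cDV (hence terminal) singularities along the image curve, and is in fact Gorenstein. In cases (2) and (3), the branch divisor $V + H$ acquires simple normal crossings along the curve $V \cap H$, producing $cA_1$-type singularities on $X'$; one must then trace these through the divisorial contraction, using the explicit local form $xy = f(z,t)$ of the double cover, to show that the non-Gorenstein singularities inherited by $X$ are precisely those predicted by Theorem \ref{thm: Noether lines}, namely a single $\tfrac{1}{2}(1,-1,1)$ in case (2) and either two copies of $\tfrac{1}{2}(1,-1,1)$ or one $cA_1/\mu_2$ in case (3). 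Because all the local models are fully explicit, this last step should reduce to a finite set of direct computations rather than any genuinely new geometric input.
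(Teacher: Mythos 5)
Your proposal coincides with the paper's own proof: the paper uses exactly the same parameter choices, namely $(a,b)=(2(e+k),5(e+k))$ for (1), $(a,b)=(2(e+k)+1,5(e+k)+3)$ for (2) and $(a,b)=(2(e+k),5(e+k)+1)$ for (3), and concludes by reading off $p_g$ and $K^3$ from Lemmas \ref{lem: example pg} and \ref{lem: example construction}. The singularity analysis you flag as the ``main obstacle'' (upgrading canonical to $\QQ$-factorial terminal) is not carried out in the paper either --- it simply cites Lemma \ref{lem: example construction} --- so on that point your plan is, if anything, more careful than the published argument.
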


\begin{proof}
	For the $3$-fold in (1), let $a = 2(e+k)$ and $b = 5(e+k)$. By Lemma \ref{lem: example construction}, the above construction for $a$ and $b$ gives rise to a minimal $3$-fold $X$ with $K_X^3 = 4e + 8k - 6$ and $p_g(X) = 3e + 6k - 2$. For the $3$-fold in (2) (resp. (3)), we may take $a = 2(e + k) + 1$ and $b = 5(e + k) + 3$ (resp. $a = 2(e + k)$ and $b = 5(e + k) + 1$). Thus the proof is completed.
\end{proof}

\bibliography{Ref_3foldsmallvolume}
\bibliographystyle{amsplain}

\end{document}